%
%
%
\documentclass[10pt]{article}

\usepackage{amsmath}
\usepackage{amsrefs}
\usepackage{amssymb}
\usepackage{amsthm}
\usepackage{array}
\usepackage{bm}
\usepackage{cases}
\usepackage{dcpic}
\usepackage{ifthen}
\usepackage{pictex}
\usepackage{accents}
\usepackage{color}
\usepackage{marginnote}
\usepackage{mathdots}
\numberwithin{equation}{section}
\newtheorem{Theorem}{Theorem}[section]
\newtheorem{Lemma}[Theorem]{Lemma}

\newtheorem{Proposition}[Theorem]{Proposition}
\newtheorem{Corollary}[Theorem]{Corollary}
\newtheorem{Definition}[Theorem]{Definition}

\newfont{\deffont}{cmbxti10}
\newfont{\german}{eufm10}
\newfont{\mymath}{cmr12}

\newcommand\lieg{\mathfrak{g}}

\newcommand\lief{\mathfrak{f}}
\newcommand\liee{\mathfrak{e}}

\newcommand\lies{\mathfrak{s}}
\newcommand{\liep}{\mathfrak p}
\newcommand{\liey}{\mathfrak y}
\newcommand{\liem}{\mathfrak m}
\newcommand{\lieh}{\mathfrak h}
\newcommand{\liex}{\mathfrak x}
\newcommand{\liel}{\mathfrak l}
\newcommand{\lieo}{\mathfrak o}
\newcommand{\liek}{\mathfrak k}

\newcommand\hook{\mathbin{\raise2.5pt\hbox{\hbox{{\vbox{\hrule height.4pt 
width6pt depth0pt}}}\vrule height3pt width.4pt depth0pt}\,}}

\newcommand\cTM{T^*\kern-2ptM}

\newcommand\sym{\mathfrak{X}}

\newcommand\thetaX{\theta_{\kern -1 pt X}}

\newcommand\CalPf{\mathcal{P}\text{\it \kern -.3pt f}}

\newcommand\TM{T\kern -2pt M}

\newcommand\mycap{\hbox{\ $\rlap{\kern -.3pt $\cap$}\raise.8pt\hbox{$\scriptstyle+$}$\ } }

\DeclareMathOperator{\rank}{rank}

\newboolean{proofmode}

\newcommand{\StTag}[1]{ \label{st:#1}
\ifthenelse{\boolean{proofmode}}{\ \marginpar{\bfquad\scriptsize st:#1} }{}      }

\newcommand{\EqTag}[1]{
\ifthenelse{\boolean{proofmode}}
{ {\label{eq:#1}}
  \stepcounter{equation} 
  \tag{\theequation \rlap{\kern 23 pt{\scriptsize eq:#1}}} 
}
{\label{eq:#1}}
 }

\newcommand{\EqRef}[1]{\eqref{eq:#1}}
\newcommand{\StRef}[1]{\ref{st:#1}}

\newcommand\CalD{\mathcal{D}}
 
\newcommand\CalI{\mathcal{I}} 
\newcommand\CalL{\mathcal{L}}


\newcommand\barM{
	\hbox{\kern 2.3 true pt 
	\vbox{\hrule width 8.5  true pt height .3 true pt \kern .9 true pt
	\hbox{\kern -2.3 true pt $M$}}}}
\newcommand\barU{
	\hbox{\kern .8 true pt 
	\vbox{\hrule width 6.5  true pt height .3 true pt \kern .9 true pt
	\hbox{\kern -.8 true pt $U$}}}}
\newcommand\barCalI{
	\hbox{\kern 4.3 true pt 
	\vbox{\hrule width 6.5  true pt height .3 true pt \kern .9 true pt
	\hbox{ \kern -4.3 true pt  $\CalI$}}}}
\newcommand\barXi{
	\hbox{\kern 1 true pt 
	\vbox{\hrule width 6.5  true pt height .3 true pt \kern .9 true pt
	\hbox{\kern 1 true pt $\Xi$}}}}
\newcommand\Largehat{\smash{\raise -7.5 pt \hbox{\rm\Large\^{}}}}   
\newcommand\LARGEhat{\smash{\raise -9.5 pt \hbox{\rm\LARGE\^{}}}}  
\newcommand\hugehat{\smash{\raise -7.5 pt \hbox{\rm\huge\^{}}}}  
\newcommand\Hugehat{\smash{\raise -7.5 pt \hbox{\rm\Huge\^{}}}}  

\newcommand\Largecheck{\smash{\raise -7.5 pt \hbox{\rm\Large\v{}}}}   
\newcommand\LARGEcheck{\smash{\raise -9 pt \hbox{\rm\LARGE\v{}}}}  
\newcommand\hugecheck{\smash{\raise -7.5 pt \hbox{\rm\huge\v{}}}}  
\newcommand\Hugecheck{\smash{\raise -7.5 pt \hbox{\rm\Huge\v{}}}}  











\newcommand\bfthetaX{{\boldsymbol{\theta_{\kern -1 pt X}}}}




\newcommand\Rtheta{{ \raise 1pt \hbox{$\scriptstyle {\boldsymbol{\theta}}$}}}
\newcommand\Ltheta{{ \lower 1pt \hbox{$\scriptstyle {\boldsymbol{\theta}}$}}}

\newcommand\Rsigma{{ \raise 1pt \hbox{$\scriptstyle \sigma$}}}

\newcommand\Reta{{ \raise 1pt \hbox{$\scriptstyle \eta$}}}

\newcommand\vecthsigma{\partial_{{\displaystyle \hat {\raise 1.3pt \hbox{$\scriptstyle \sigma$}}}^a}}
\newcommand\vectcsigma{\partial_{{\displaystyle \check {\raise 1.3pt \hbox{$\scriptstyle \sigma$}}}^\alpha}}

%

\newcommand\gfr{\mathfrak{g}}
\newcommand\mfr{\mathfrak{m}}


\setboolean{proofmode}{false}

\begin{document}

\title{
	Non-Rigid Parabolic Geometries of Monge Type
} 

\author{ 
Ian Anderson 
\\ Dept of Math. and Stat. 
\\ Utah State University  
\\ Logan, Utah 
\\USA 84322
\and  Zhaohu Nie 
\\Dept of Math. and Sat. 
\\ Utah State University
\\Logan, Utah 
\\USA 84322
\and Pawel Nurowski
\\ Centrum Fizyki Teoretycznej,
\\ Polska Akademia Nauk,
\\ Al. Lotnik\'ow 32/46, 02-668,
\\ Warszawa,  Poland
}

\maketitle

\begin{center}
{\bf Abstract}
\end{center}
{In this paper we study a novel class of parabolic geometries which we call parabolic geometries of Monge type. These parabolic geometries are defined by gradings such that their -1 component contains a nonzero co-dimension 1 abelian subspace whose bracket with its complement is non-degenerate. We completely classify the simple Lie algebras with such gradings in terms of elementary properties of the defining set of simple roots.  In addition we characterize  those parabolic geometries of Monge type which are non-rigid in the sense that they have nonzero harmonic curvatures in positive weights. Standard models of all non-rigid parabolic geometries of Monge type are described by under-determined ODE systems. The full  symmetry algebras for these under-determined ODE systems are explicitly calculated; surprisingly, these symmetries are all  just prolonged  point symmetries.
}

\newpage

\section{Introduction}

\newtheorem*{ThmA }{Theorem A}{}{}
\newcommand\pr{\text{pr}}
	Early in the development of the structure theory for simple Lie algebras, W. Killing  \cite{hawkins, helgason:1977a} conjectured  
	that there exists a  rank 2, 14-dimensional  simple Lie algebra $\lieg_2$ which admits a realization as a Lie algebra of vector fields on 	a 5-dimensional manifold.  
	This realization was discovered independently by F. Engel 
	and E. Cartan\footnote{Their articles appear sequentially  in 1893 in  Comptes Rendu \cite{cartan:1893a}, \cite{engel:1893a}. } and is given by the  
	infinitesimal symmetries of the rank 2 distribution in 5 variables for the  under-determined  ordinary differential equation 
\begin{equation}
	 \frac{d z}{d x}  = \biggl[  \frac{d^2 y}{d x^2} \biggr]^2.
\EqTag{StandardG}
 \end{equation}
	Recall that for any distribution  $\CalD$ defined on a manifold $M$, the  {\deffont Lie algebra of infinitesimal symmetries}  $\sym(\CalD)$  is the set of 
	vector fields $X$ on $M$ such that  $[X, \CalD] \subset \CalD$. Equation \EqRef{StandardG} subsequently re-appeared 
	as the flat model in Cartan's solution  \cite{cartan:1910a} to the equivalence problem for 
	rank 2 distributions 
		in 5 variables and in papers by Hilbert \cite{hilbert:1912a} 
	and 	Cartan  \cite{cartan:1914a} on the problem of closed form integration of under-determined ODE systems.  

	It is therefore natural to ask 
	if all simple Lie algebras admit such elegant realizations as  the infinitesimal symmetries of under-determined systems of ordinary differential equations.  We shall formulate this question within the 
	context of parabolic geometry and give a complete answer in terms of the novel concept of a {\deffont parabolic geometry of Monge type}. These geometries are defined intrinsically in terms of the $-1$ grading component and exist for all types of simple Lie algebras.
	In this paper we shall {\bf [i]} completely classify all parabolic geometries of Monge type; {\bf [ii]} identify those geometries which are non-rigid and describe the spaces of fundamental curvatures in terms of the second Lie algebra cohomology; {\bf [iii]} give under-determined ODE realizations for the standard models; and  {\bf [iv]} explicitly calculate the infinitesimal symmetries for the standard models. 
	For each classical simple Lie algebra, 
	one particular parabolic Monge geometry of depth 3 stands out from all the others. We believe that these particular geometries, listed in Theorem A, merit further study similar to that for the well-known $|1|$-gradations and contact gradations. 
		
	To explain this work in more detail, we first recall a few basic definitions from the general theory of parabolic geometry. 
	As presented in \cite{cap-slovak:2009a, yamaguchi:1993a}, the underlying structure for any parabolic geometry is a semi-simple Lie algebra $\gfr$ 
	and a vector space decomposition 
\begin{equation}
	\lieg = \lieg_k  \oplus \dots  \oplus \lieg_1 \oplus \lieg_0 \oplus \lieg_{-1} \oplus \dots \oplus \lieg_{-k} .
	\label{grading}
\end{equation}
	Such a decomposition is called a {\deffont $|k|$-grading} if: {\bf [i]} $[\lieg_i, \lieg_j ] \subset \lieg_{i+j}$;  {\bf [ii]} the negative part of this grading 
\begin{equation*} 
		\lieg_{-}  = \lieg_{-1} \oplus \dots \oplus \lieg_{-k}
\end{equation*}
	is generated by  $\lieg_{-1}$, that is,  $[\lieg_{-1}, \lieg_{\ell}] = \lieg_{-1+ \ell} $ for $\ell < 0$; 
	and  {\bf [iii]} $\lieg_k \neq {0} $ and $\lieg_{-k} \neq {0}$.  The negatively graded part  $\lieg_{-}$ is a graded nilpotent  Lie algebra  
	while the non-negative part of this grading
\begin{equation*}
	\liep  = \lieg_k  \oplus \cdots  \oplus \lieg_1 \oplus \lieg_0
\end{equation*}
	is always a parabolic subalgebra.  We remark that for a fixed choice of simple roots $\Delta^0$ of $\lieg$, there is a one-to-one correspondence between the subsets $\Sigma$ of $\Delta^0$ and the gradings of $\lieg$ \cite[pp. 292-3]{cap-slovak:2009a}. We will denote the corresponding parabolic geometry constructed this way by $(\lieg, \Sigma)$. 

	For every  $|k|$-grading of a simple Lie algebra $\lieg$, there is unique element $E\in \lieg_0$, called the {\deffont grading element}, 
	such that $[E, x] = j x$ for all $x \in \lieg_{j}$ and $-k\leq j \leq k$.  Let $\Lambda^q(\lieg_{-}, \lieg)$ be the vector space of 
	$q$-forms on $\lieg_-$ with values in $\lieg$ and set
$	\Lambda^{q}(\lieg_{-}, \lieg)_p $
	to be the subspace of $q$-forms which are homogeneous of weight $p$, that is, 
	$$
		\Lambda^{q}(\lieg_{-}, \lieg)_p= \{\omega \in \Lambda^q(\lieg_{-}, \lieg)\ |\  \CalL_E(\omega) = p\, \omega \}. 
	$$
	The spaces  $\Lambda^{*}(\lieg_{-}, \lieg)_p$ 
	define a co-chain complex with respect to the standard Lie algebra differential. The cohomology of this co-chain complex is denote by\footnote{The notation in Yamaguchi \cite{yamaguchi:1993a} is $H^{p, q}(\lieg_{-}, \lieg) = H^q(\lieg_{-}, \lieg)_{p+q-1}.$	}
	$H^{q}(\lieg_{-}, \lieg)_p$.   A parabolic geometry is called {\deffont  rigid}  if all the degree 2 cohomology spaces 
	in positive weights vanish and   {\deffont  non-rigid} otherwise. 
	The cohomology spaces $H^{q}(\lieg_{-}, \lieg)_p$ 
	can be calculated by the celebrated method of Kostant \cite{kostant} (see also \cite[\S 3.3]{cap-slovak:2009a} and \cite[\S 5.1]{yamaguchi:1993a}). 
	
	With these preliminaries dispatched, fix a $|k|$-grading of  $\gfr$, let  $N$ be the  simply connected Lie group  
	with Lie algebra $\lieg_{-}$  and let $\CalD(\lieg_{-1})$ be the  distribution on $N$ generated by the 
	left invariant vector fields corresponding to the $\lieg_{-1}$ 	component of $\lieg_{-}$. 
	This distribution is called the {\deffont standard differential system} associated to the given parabolic geometry.

	It is a fundamental result of N. Tanaka (see \cite{yamaguchi:1993a} Sections 2 and 5, especially pages 432 and 475) that if
$	H^{ 1}(\lieg_{-}, \lieg)_p  = 0$ for $p \geq 0$, 
	then the Tanaka prolongation of $\lieg_-$ coincides with $\lieg$ and 
	we have the following Lie algebra isomorphism
	\begin{equation}
	 \sym(\CalD(\lieg_{-1})) \cong \lieg. 
\EqTag{IntroSym}
\end{equation}
	{\it In this way, one can construct many examples of distributions  $\CalD$ whose symmetry
	algebra $\sym(\CalD)$  is a  given finite dimensional  simple Lie algebra $ \lieg$.} 
	Indeed, pick a subset $\Sigma \subset \Delta^0$ of the simple 
	roots and construct the associated grading \eqref{grading}, which we require to 
		satisfy $	H^{ 1}(\lieg_{-}, \lieg)_p  = 0$ for $p \geq 0$. This cohomology condition is generally satisfied, with the few exceptions enumerated in \cite[Proposition 4.3.1]{cap-slovak:2009a} or \cite[Proposition 5.1]{yamaguchi:1993a}. 
		Then calculate the left invariant vector fields on the nilpotent Lie group $N$.  
		By \EqRef{IntroSym} the Lie algebra of the infinitesimal symmetries of the standard differential system $\CalD(\lieg_{-1})$ 
	is the given simple Lie algebra  $\lieg$. Finally write down a system of ordinary or partial differential equations 
	whose canonical differential system is $\CalD(\lieg_{-1})$.  
	
	All of these calculations can be done with the Maple {\it DifferentialGeometry} package and this allowed the authors to generate many 
	examples of differential equations with prescribed simple Lie algebras of infinitesimal symmetries.  For each classical simple Lie algebra
	{\it one} particular parabolic geometry immediately stood out from all the others. These are listed in the following theorem.

\newtheorem*{ThmX}{Theorem  A}{}{}
\newtheorem*{ThmB}{Theorem  B}{}{}
\begin{ThmX}[]  
	The standard differential systems  for the  parabolic geometries   
	$A_\ell\{\,\alpha_1,\alpha_2,\alpha_3\, \}$,  
	$C_\ell\{\alpha_{\ell-1}, \alpha_\ell\}$, $B_\ell\{\alpha_1, \alpha_2\}$ and  $D_\ell\{\alpha_1, \alpha_2\}$,
	are  realized as the  canonical  differential systems for the under-determined ordinary differential equations
\begin{alignat}{2}
{\bf I}:&\  A_\ell\{\,\alpha_1,\alpha_2,\alpha_3\, \},\  \ell \geq 3,  & \dot z^i &=  \dot y^0 \dot y^i,  \ 1 \leq i \leq \ell -2.  
\EqTag{StandardA}
\\[1\jot]
{\bf II}: &\ C_\ell\{\,\alpha_{\ell-1}, \alpha_\ell\,\},\ \ell \geq 3,  & \dot z^{ij} &= \dot y^i \dot y^j,  \ 1 \leq i\leq j \leq \ell -1.
\EqTag{StandardC}
\\[1\jot]
{\bf III}: &\ B_\ell\{\,\alpha_1, \alpha_2\,\},\   \ell \geq 3,  & \dot z  &=    \frac{1}{2}\sum_{i, j=1}^{2\ell-3} \kappa_{ij}\dot y^i \dot y^j.  
\EqTag{StandardB}
\\[1\jot]
{\bf IV}: &\ D_\ell\{\,\alpha_1, \alpha_2\,\}, \   \ell \geq 4,\ D_3\{\,\alpha_1, \alpha_2, \alpha_3\,\}, & \dot z  &=    \frac{1}{2} \sum_{i, j=1}^{2\ell-4}\kappa_{ij}\dot y^i \dot y^j.      
\EqTag{StandardD}
\end{alignat}
	Here $(\kappa_{ij})$ is a symmetric, non-degenerate constant matrix of an arbitrary signature $(r, s)$, where $r+s = 2\ell -3$ for $B_\ell$ or $r+s = 2\ell -4$ for $D_\ell$. 
	The symmetry algebras of {\bf I} through {\bf IV} are isomorphic,  as real Lie algebras, 
	to $\lies\liel(\ell +1, {\mathbb R})$, $\lies\liep(\ell, {\mathbb R})$,  $\lies\lieo(r+2 , s+2)$, and  $\lies\lieo(r+2 , s+2)$,  respectively. 
\end{ThmX}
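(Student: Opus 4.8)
The plan is to establish, for each of the four families, the following chain: (1) compute the nilpotent negative part $\lieg_-$ of the indicated real simple Lie algebra together with coordinates adapted to the $|3|$-grading; (2) realize $\lieg_-$ as a Lie algebra of polynomial vector fields on a coordinate model of the simply connected group $N$; (3) write down $\CalD(\lieg_{-1})$ explicitly by Pfaffian (contact) equations; and (4) compare this Pfaffian system with Cartan's canonical system of the displayed under-determined ODE. The isomorphism of symmetry algebras then follows from Tanaka's theorem together with \EqRef{IntroSym}, once the cohomological hypothesis is verified; pinning down the real form is a matter of tracking an invariant bilinear form.

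For steps (1)--(2): removing $\Sigma$ from $\Delta^0$ determines a partition of the defining representation --- the partition $(1,1,1,\ell-2)$ of $\ell+1$ for $A_\ell\{\alpha_1,\alpha_2,\alpha_3\}$, and the corresponding isotropic-flag data in the symplectic, odd-orthogonal and even-orthogonal cases --- and $\lieg_-$ is the strictly block-lower-triangular part graded by block distance. In each case one checks directly (from the root-space description, or equivalently by evaluating the grading element $E=2e_1^*+e_2^*$ on weights in the orthogonal cases) that the grading has depth $3$, and one exhibits inside $\lieg_{-1}$ the codimension-one abelian subspace whose bracket with a complementary line is non-degenerate into $\lieg_{-2}$; this is exactly the Monge condition, and it singles out the distinguished variable $y^0$ in case \textbf{I} and the quadratic form $\kappa_{ij}$ in cases \textbf{II}--\textbf{IV}. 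Passing to the group via Baker--Campbell--Hausdorff produces explicit polynomial coordinates $(x,y^\bullet,p^\bullet,z^\bullet)$ on $N$ in which a left-invariant frame for $\lieg_{-1}$ consists of one ``total derivative'' field of the form $\partial_x + p^\bullet(\cdots)\,\partial_{z^\bullet}+\cdots$ together with the coordinate fields $\partial_{p^\bullet}$.

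For steps (3)--(4): dualizing this frame, $\CalD(\lieg_{-1})$ is annihilated by the one-forms $dy^i - p^i\,dx$ together with $dz^{\bullet} - Q^{\bullet}(p)\,dx$, where $Q^{\bullet}$ equals $p^0p^i$ in \textbf{I}, $p^ip^j$ in \textbf{II}, and $\tfrac12\kappa_{ij}p^ip^j$ in \textbf{III}--\textbf{IV}; this is precisely the canonical system of \EqRef{StandardA}--\EqRef{StandardD}. One then checks that none of these four gradings lies on the short exceptional list of \cite[Prop.~4.3.1]{cap-slovak:2009a} (equivalently \cite[Prop.~5.1]{yamaguchi:1993a}), so $H^{1}(\lieg_-,\lieg)_p=0$ for all $p\ge 0$; Tanaka's prolongation theorem and \EqRef{IntroSym} then yield $\sym(\CalD(\lieg_{-1}))\cong\lieg$ as real Lie algebras. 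For the real form: carrying out the construction over $\mathbb{R}$ from $\lies\liel(\ell+1,\mathbb{R})$ and $\lies\liep(\ell,\mathbb{R})$ produces those algebras in \textbf{I}, \textbf{II}; in \textbf{III}, \textbf{IV} the one-dimensional top component $\lieg_{-3}$ and the pairing $\lieg_{-1}\times\lieg_{-2}\to\lieg_{-3}$ extending $\kappa$, together with the $\lieg_0$-action, determine an invariant quadratic form on the standard representation of the prolongation; that representation splits into grading pieces of dimensions $1,1,(2\ell-3),1,1$ (resp.\ $1,1,(2\ell-4),1,1$), the middle piece carrying $\kappa$ of signature $(r,s)$ and the four outer lines pairing into two hyperbolic planes, giving signature $(r+2,s+2)$ and hence $\lies\lieo(r+2,s+2)$.

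The main obstacle is steps (3)--(4) together with the real-form computation: turning the structural description into a genuine coordinate-level identification of the two differential systems --- in particular verifying that the Baker--Campbell--Hausdorff group law reproduces \emph{exactly} the quadratic right-hand sides of \EqRef{StandardA}--\EqRef{StandardD} with no lower-order corrections --- and then showing that every signature $(r,s)$ of $\kappa$ (with $r+s=2\ell-3$ for $B_\ell$, $r+s=2\ell-4$ for $D_\ell$) is realized and yields $\lies\lieo(r+2,s+2)$. The cohomological input $H^{1}(\lieg_-,\lieg)_p=0$ for $p\ge 0$, although standard via Kostant's theorem, is essential: it is precisely what guarantees that the symmetry algebra is exactly $\lieg$ and is not enlarged by spurious symmetries.
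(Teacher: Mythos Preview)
Your proposal is correct and follows essentially the same approach as the paper. The paper's proof (Section 4) carries out exactly your steps (1)--(4): it writes down explicit matrix root vectors for each family, reads off the structure equations of $\lieg_-$, integrates the dual Maurer--Cartan equations directly (rather than invoking Baker--Campbell--Hausdorff by name, though the content is the same), and identifies the resulting Pfaffian system with the canonical system of the displayed Monge equations; it then invokes Tanaka's theorem for the symmetry algebra, working in the split real form and deferring the general-signature case for $B_\ell$, $D_\ell$ to a remark on real gradings. Two small points: your claim that the Monge condition ``singles out the quadratic form $\kappa_{ij}$ in cases \textbf{II}--\textbf{IV}'' is slightly off for case \textbf{II}, where the structure is a full symmetric array $z^{ij}$ rather than a single quadratic form; and the paper supplements the Tanaka argument with an explicit computation of the symmetry generators (Section 5), which serves as an independent check but is not logically required.
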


We note that the only repetition in the above list is $A_3$ and $D_3$, where the matrix $(\kappa_{ij})$ has signature $(1, 1)$, corresponding to the isomorphism $\lies\liel(4, {\mathbb R})\cong \lies\lieo(3, 3)$. 

Evidently, equations \EqRef{StandardB} and \EqRef{StandardD} are the differential equations for a curve $\gamma(x) = (x, y^i(x), z(x))$ to lie on the null cone of the metric 
$$
g = dx\, dz - \frac{1}{2} \sum_{ij} \kappa_{ij} dy^i\, dy^j.
$$
Similarly, the Monge equations \EqRef{StandardA} and \EqRef{StandardC} can be interpreted as the differential equations for curves to lie on the common null cones of families of (degenerate) quadratic forms
$$
\{dx\, dz^i - dy^0\, dy^i\} \quad \text{and} \quad \{dx\, dz^{ij} - dy^i\, dy^j\}.
$$
The geometric characterization of these families of quadratic forms and their roles as geometric structures associated to parabolic geometries are interesting problems in their own right, which we hope to address in a future publication. 

	{\it The main result of this paper is an intrinsic  characterization of those parabolic geometries arising in Theorem A, as well as the $\lieg_2$ parabolic geometries defining equation \EqRef{StandardG}}. To motivate this result, two key observations are needed. First, under-determined  systems of ordinary differential
equations such as {\bf I} -- {\bf IV} are often referred to, in the geometric differential equation literature, as {\deffont Monge equations}. As distributions these 
Monge equations are all generated by vector fields $\{X, Y_1, Y_2, \dots, Y_d\} $  such that  $[Y_i, Y_j] = 0$ and such that the $2d +1$ vector fields $\{X, Y_i,  [X,Y_i]\}$   are all point-wise independent. This first observation suggests the following fundamental definition. 

\begin{Definition} 
\StTag{MongeDef} 
A parabolic geometry 
$$
\lieg = \lieg_k  \oplus \dots  \oplus \lieg_1 \oplus \lieg_0 \oplus \lieg_{-1} \oplus \dots \oplus \lieg_{-k}
$$
is of {\deffont Monge type} if its $-1$ grading component $\lieg_{-1}$ contains a co-dimension 1 non-zero abelian subalgebra $\liey$ and $\dim \lieg_{-2} = \dim \liey$.
\end{Definition}

	The second observation is that each of the parabolic geometries arising in Theorem A, as well as the Hilbert-Cartan equation \EqRef{StandardG}, is  non-rigid. 
	These two observations
motivate our second theorem.

\begin{ThmB} 
	Let $\lieg$ be a split 
	simple Lie algebra of rank $\ell$ with simple roots $\{\alpha_1, \alpha_2, \ldots, \alpha_\ell\}$. 
	The following is a complete list of non-rigid parabolic geometries  of Monge type. 
\begin{alignat*}{3}
{\bf Ia}:&\  A_\ell\{\,\alpha_1,\alpha_2,\alpha_3\, \},\  \ell \geq 3   \quad&  {\bf Ib}:&\  A_\ell\{\, \alpha_1,\alpha_2\,\},\ \ell \geq 2
\\
{\bf IIa}:&\  C_\ell\{\,\alpha_{\ell-1},\alpha_\ell\,\},\ \ell \geq 3 \quad & {\bf IIb}:& \ C_3\{\, \alpha_1, \alpha_2,  \alpha_3\, \}
\\
{\bf IIIa}:&\ B_\ell\{\,\alpha_1,\alpha_2\,\},\ \ell \geq 2 \quad & {\bf IIIb}:&\ B_2 \{\, \alpha_2\,\}  \quad  {\bf IIIc}:\  B_3 \{\, \alpha_2, \alpha_3\,\} 
\\
\quad {\bf IIId}:&\   B_3\{\, \alpha_1, \alpha_2, \alpha_3\, \}
\\
{\bf IVa}:&\   D_\ell\{\, \alpha_1, \alpha_2\,\},\ \ell \geq 4
\\
{\bf Va}:&\   G_2\{\, \alpha_1\,\}\quad & {\bf Vb}:&\  G_2\{\, \alpha_1, \alpha_2\,\}. 
\end{alignat*}
\end{ThmB}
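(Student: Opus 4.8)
The plan is to reduce Theorem B to a finite, purely combinatorial problem about marked Dynkin diagrams, using Kostant's theorem and a simple structural analysis of when a $|k|$-grading can be of Monge type. There are two conditions to track: (a) the Monge condition from Definition \StRef{MongeDef}, which must be translatable into the combinatorics of the marked roots $\Sigma$; and (b) non-rigidity, i.e. $H^2(\lieg_-,\lieg)_p \neq 0$ for some $p > 0$, which by Kostant's algorithm is computable from $\Sigma$ alone. The main work is step (a); step (b) is then a finite check over the candidates surviving step (a).

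\smallskip

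\noindent\emph{Step 1: Translate the Monge condition.} First I would unpack Definition \StRef{MongeDef} in terms of root spaces. Writing $\lieg_{-1} = \bigoplus_{\beta} \lieg_{-\beta}$ over roots $\beta$ of $\Sigma$-height exactly $1$, the abelian codimension-one subalgebra $\liey$ is (essentially) forced to be a sum of root spaces, and $[\liey,\liey] = 0$ says no two roots appearing in $\liey$ sum to a root of $\Sigma$-height $2$. The complementary line $\lieg_{-1} = \liey \oplus \langle X \rangle$ together with $\dim\lieg_{-2} = \dim\liey$ and the generation condition $[\lieg_{-1},\lieg_{-1}] = \lieg_{-2}$ forces the bracket $[X,\cdot]\colon \liey \to \lieg_{-2}$ to be a linear isomorphism. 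I expect this to pin down the combinatorial shape of $\Sigma$ quite rigidly: one marked node (the one ``responsible'' for $X$) carrying a specific structure, and the rest of $\lieg_{-2}$ being a single ``row'' isomorphic to $\liey$. Concretely I would argue that $\lieg_{-2}$ must be an irreducible $\lieg_0$-module of the same dimension as $\liey$, and that this is a strong height/multiplicity constraint readable off the extended Dynkin diagram. This is the step I expect to be the main obstacle, because one must show the list is \emph{complete} — that no exotic grading satisfies the dimension equality by accident — which likely requires a case analysis over the Dynkin types $A$–$G$ and the possible sizes of $\Sigma$ ($|\Sigma| \le 3$ turns out to be forced, since more marked nodes inflate $\dim\lieg_{-2}$ past $\dim\liey$).

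\smallskip

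\noindent\emph{Step 2: Enumerate Monge candidates.} Having the combinatorial characterization, I would run through each simple type and each allowed $\Sigma$. For the classical types this amounts to checking which sets of one, two, or three simple roots produce a $\lieg_{-2}$ of the right dimension; the contact gradings ($\{\alpha_1,\alpha_2\}$-type in $B,D$, the highest-root grading, etc.) and the $g_2$ cases appear here. One should also confirm the generation axiom {\bf [ii]} and that the abelian subalgebra genuinely has codimension $1$ (ruling out, e.g., cases where $\lieg_{-1}$ is itself abelian, which fail the non-degeneracy built into $\dim\lieg_{-2} = \dim\liey > 0$, or cases where no codimension-one abelian piece exists). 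This yields a finite list strictly containing the answer.

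\smallskip

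\noindent\emph{Step 3: Impose non-rigidity via Kostant.} For each surviving candidate $(\lieg,\Sigma)$, apply Kostant's theorem: $H^2(\lieg_-,\lieg)$ is a sum of irreducible $\lieg_0$-modules indexed by length-$2$ elements $w$ of the Hasse diagram of $W^{\liep}$ (Weyl group coset representatives), with the weight $p$ of the corresponding piece computed from $w$ and the lowest weight $-\delta$ (or, in Yamaguchi's bookkeeping, $H^{p,2}$). Non-rigidity means some such contribution lands in positive weight $p \ge 1$. This is a mechanical finite computation, doable by hand or with the \emph{DifferentialGeometry} package as the authors note; it will discard the rigid Monge geometries (e.g. most $|1|$-gradings, which are rigid in degree $\le 1$ but here we need degree $2$ — actually $|1|$-gradings of Monge type, if any survive Step 2, are among those to test) and leave exactly {\bf Ia}–{\bf Vb}. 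I would present the outcome as a table keyed to the Dynkin diagram marking, citing \cite[\S3.3]{cap-slovak:2009a} and \cite{kostant} for the cohomology computation and \cite[Prop.~4.3.1]{cap-slovak:2009a} to handle the few gradings where even $H^1$ in nonnegative weight is nonzero (so that \EqRef{IntroSym} must be applied with care). The theorem then follows by combining the completeness of Step 1–2 with the non-rigidity filter of Step 3.
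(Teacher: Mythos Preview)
Your three-step strategy---translate the Monge condition into root combinatorics, enumerate the finitely many candidates, then filter by Kostant's cohomology computation---is exactly the paper's approach, and your anticipation that the complementary line is spanned by a single root vector with $\operatorname{ad}_X\colon\liey\to\lieg_{-2}$ an isomorphism is precisely how the paper begins (Proposition \StRef{leader}).

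One expectation in Step 1 is off and would leave a gap in completeness if you relied on it: the bound $|\Sigma|\le 3$ is \emph{not} forced. The paper's characterization (Theorem \StRef{ClassificationOfMonge}) is sharper and slightly different from what you anticipate: there is a distinguished simple root $\zeta\in\Sigma$ (the ``leader'', corresponding to your $X$), and $\Sigma$ consists of $\zeta$ together with \emph{all} simple roots adjacent to $\zeta$ in the Dynkin diagram, subject to the further constraint that each branch of the diagram obtained by deleting $\zeta$ gives a $|1|$-grading at its root in $\Sigma\setminus\{\zeta\}$. In particular, for $D_\ell$ with $\zeta$ the trivalent node one gets $|\Sigma|=4$. (These $D_\ell$ cases with $|\Sigma|\ge 3$ turn out to be rigid, so the final list in Theorem B is unaffected, but your completeness argument would have a hole.) Also, you do not need the irreducibility of $\lieg_{-2}$ as a $\lieg_0$-module; the paper instead proves $\liey$ is $\lieg_0$-invariant and uses complete reducibility to split off $\liex$, then reads off the leader directly. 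With those adjustments, your outline matches the paper's proof.
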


A number of remarks concerning Theorem B are in order. First,  
	the standard differential systems for cases {\bf Ia, IIa}, $\dots$, {\bf Va}  are precisely those given by 
	equations \EqRef{StandardA},  \EqRef{StandardC}, \EqRef{StandardB} and 	\EqRef{StandardD} (for $\kappa_{ij}$ with split signature), 
and  \EqRef{StandardG}.   
Cases {\bf Ib}, {\bf IIIb}, and {\bf IIIa} with $\ell=2$ are the only cases where $H^1(\lieg_-, \lieg)_p\neq 0$ for some $p\geq 0$. The standard models for {\bf Ib} and {\bf IIIb} are easily seen to be the jet spaces $J^1(\mathbb R^1, \mathbb R^{\ell -1})$ and $J^1(\mathbb R^1, \mathbb R^1)$. 
The standard models for {\bf IIb}, {\bf IIIc}, and  {\bf IIId} are respectively 
\begin{gather}
\dot z^1= \dot y^1 \dot y^{2}\quad
\dot z^{2}=x\dot y^{2} \quad
\dot z^3=(y^1+\dot y^1 x)\dot y^{2}\quad
\dot z^4= y^1 \dot y^1 \dot y^2,\label{exceptionalC} \\
\dot z = \ddot y^1 \dot y^2,\quad \text{and}\label{excpB1}\\
\dot z^1= \dot y^1 \dot y^2 \quad
\dot z^2= \frac{1}{2}(\dot y^2)^2 \quad
\dot z^3=  \frac{1}{2}\dot y^1 (\dot y^2)^2 \quad
\dot z^4= \frac{1}{2}\dot y^2 (x\dot y^1 \dot y^2-y^1\dot y^2-2\dot y^1 y^2).\label{excpB2}
\end{gather}
Finally, the standard differential system in  case {\bf Vb}  is simply a partial prolongation of the standard differential system for   \EqRef{StandardG} (see also \cite[\S 1.3]{yamaguchi:1993a}). We provide the details for these calculations in Section 4. 

Secondly, it is a relatively straightforward  matter to extend this classification of non-rigid parabolic Monge geometries to {\it all real simple Lie algebras}.
In the real case the $|k|$-gradings are defined by those subsets of simple roots which are disjoint from the compact roots and invariant under the Satake involution \cite[Theorem 3.2.9]{cap-slovak:2009a}. 
This requirement, our classification of parabolic Monge gradations in Theorem \StRef{ClassificationOfMonge} and the classification of real simple Lie algebras (see, for example, \cite[Table Appendix B.4]{cap-slovak:2009a}), show that, in addition to the split real forms listed in Theorem B, one only has to include the real parabolic geometries listed in Theorem A {\bf III} and {\bf IV} for $\kappa_{ij}$ of general signature. 

Thirdly, it is  rather disappointing that none of the exceptional Lie algebras $\lief_4$, $\liee_6$, $\liee_7$, $\liee_8$  appear in Theorem B but, simply stated, there are rather few non-rigid parabolic geometries 
for these algebras \cite{yamaguchi:1993a} and none of these satisfy the  Monge criteria of Theorem \StRef{ClassificationOfMonge}. (See, however, Cartan \cite{cartan:1893b} for the standard differential system for $\lief_4\{\alpha_4\}$
which is not of Monge type. In the same spirit, see \cite[p. 480]{yamaguchi:1993a} for some other linear PDE systems with simple Lie algebras of symmetries.) 

We remark also that just as the Hilbert-Cartan equation \EqRef{StandardG} arises as the reduction of 
	the parabolic Goursat equation
$$
	32u_{xy}^3 -12 u_{yy}^2u_{xy}^2 + 9 u_{xx}^2 -36u_{xx} u_{xy} u_{yy}  + 12 u_{xx} u_{yy}^3 = 0
$$
	(see \cite{cartan:1911a} and \cite{Strazzullo}), 
	one also finds in \cite[p. 414]{cartan:1911a} that the equation \EqRef{StandardB}, with $\ell =3$, 
	appears as the reduction of a certain second order system of 3 non-linear partial differential equations for 1 unknown function  in 3 independent variables. See the Ph.D. thesis of S. Sitton \cite{Spencer} for details.

And finally, with regards to the Cartan equivalence problem  associated to each of  these non-rigid parabolic geometries of Monge type, it hardly needs to be said that   the $\lieg_2$ parabolic geometry defined by $\{\alpha_1\}$ was solved in full detail by Cartan \cite{cartan:1910a}. For the
remaining interesting cases, that is, except cases {\bf Ib}, {\bf IIIb}, and {\bf IIIa} with $\ell=2$, we remark that, unlike the $\lieg_2$ equivalence problem, {\it all fundamental invariants appear in the solution to the equivalence problem as torsion.}\footnote{For the definition of the torsion of a Cartan connection, see \cite[p. 85]{cap-slovak:2009a}.}  The equivalence problems associated to  the parabolic geometries {\bf IIb}
and {\bf IIId} are quite  remarkably simple - each 
admits only a scalar torsion invariant and no curvature invariants.

	The paper is organized as follows. In  Section  2 we give a complete classification of  the grading subsets $\Sigma$ for parabolic geometries of Monge type. 
We show, 
	in particular, that for  simple Lie algebras of rank $\ell \geq 3$, there is a unique simple root $\zeta\in \Sigma$ which is connected in the Dynkin diagram to every other element of  $\Sigma$. 
	  In Section 3, we adapt the arguments of  Yamaguchi \cite{yamaguchi:1993a} to 
	describe all the  non-rigid parabolic geometries  of Monge type, thereby proving Theorem B.  
	We also describe the  cohomology spaces $H^2(\lieg_{-}, \lieg)_p$ with positive homogeneity weights (as irreducilbe 
	representations  of $\lieg_0$)  for each non-rigid parabolic geometry.  This  gives a characterization of the curvature
	for the normal Cartan connection which will play an important role in our subsequent study of the Cartan equivalence problem
	for non-rigid parabolic geometries  of Monge type.
	In Section 4,  we explicitly give the structure equations for the nilpotent 
	Lie algebras $\lieg_{-}$ for each non-rigid parabolic geometry  of Monge type.  In each case 
	we integrate these structure equations to obtain the Monge equation realizations of the standard differential systems. This  establishes Theorem A. Finally in Section 5 we use standard methods to explicitly calculate the infinitesimal symmetry generators for our standard models in Theorem A. Remarkably, these infinitesimal symmetries are all prolonged point transformations. 
	
\medskip
\noindent{\bf Acknowledgment.} The Maple software used in this research was developed with the support of Anderson's NSF grant ACI 1148331SI2-SSE. Nurowski is partially supported by the Polish National Research Center
under the grant  DEC-2013/09/B/ST1/01799.

\section{Parabolic Geometries of Monge type}
	
	In the introduction we defined the notion of a parabolic geometry of Monge type (Definition \StRef{MongeDef}) as one for which  the $\lieg_{-1}$ component
	contains  a co-dimension 1 non-zero abelian subalgebra $\liey$ satisfying $ \dim \lieg_{-2} = \dim \liey$.
	In this section we obtain a remarkable  intrinsic classification of these parabolic geometries
        in terms of the defining set of simple roots  $\Sigma$. The key to this classification is the fact that the set $\Sigma$ must contain a 
	distinguished root $\zeta$ which is adjacent to all the other roots of $\Sigma$ in the Dynkin diagram of $\lieg$ (see Theorem \StRef{ClassificationOfMonge}). 
	
	Let  $\lieg$ be a complex semi-simple Lie algebra  of rank $\ell$
	 with Cartan subalgebra $\lieh$ and roots $\Delta$, positive roots $\Delta^+$ and 
simple roots $\Delta^0$. 
	The height of a root $\beta= \displaystyle \sum_{\alpha \in \Delta^0} n_{\alpha} \alpha$ with respect to $\Sigma$ is defined as 
	$\displaystyle
	\operatorname{ht}_\Sigma(\beta)= \sum_{\alpha \in \Sigma} n_{\alpha}, 
	$
	and the set of roots with height $j$ is denoted by $\Delta_\Sigma^j$. 
	The $j$-th grading component in \eqref{grading} is
	$$
	\lieg_j = \bigoplus_{\beta\in \Delta_\Sigma^j} \lieg_\beta\ \ \text{for }j\neq 0\quad \text{and }\ \lieg_0 =  \lieh\oplus \bigoplus_{\beta\in \Delta_\Sigma^0} \lieg_\beta.
	$$
	It is clear that $\dim \lieg_{j} = \dim\lieg_{-j}$. 
			
	While we shall primarily be concerned with the case  $ \dim \lieg_{-1} > 2$, 
	we shall, nevertheless, be required to  carefully analyze the case $ \dim \lieg_{-1} =2$  since this contains the exceptional 
	Lie algebra $\lieg_2$ for the Hilbert-Cartan equation \EqRef{StandardG}. For this special case, we shall use the following.

\begin{Lemma}  
	Let $\lieg$ be a  $|k|$-graded simple Lie  algebra. If  $ \dim \lieg_{-1} =  2$,  then 
	$\rank \lieg = 2$.
\end{Lemma}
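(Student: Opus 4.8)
The plan is to leverage the constraint $\dim\lieg_{-1}=2$ together with the defining property of a $|k|$-grading to pin down the rank. Since the grading comes from a subset $\Sigma\subset\Delta^0$, and $\lieg_{-1}=\bigoplus_{\beta\in\Delta_\Sigma^{-1}}\lieg_\beta$ with each root space one-dimensional, the condition $\dim\lieg_{-1}=2$ says exactly that there are precisely two positive roots of $\Sigma$-height $1$; call their negatives $-\beta_1,-\beta_2$, so $\beta_1,\beta_2\in\Delta^+$ are the two roots that involve the simple roots in $\Sigma$ with total multiplicity one. First I would observe that $\Sigma$ itself consists of roots of height $1$, so $\Sigma\subseteq\{\beta_1,\beta_2\}$ and hence $|\Sigma|\le 2$. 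If $|\Sigma|=1$, say $\Sigma=\{\alpha\}$, then the height-$1$ roots are precisely the positive roots in which $\alpha$ appears with coefficient exactly $1$; the requirement that there be only two of these is very restrictive.

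The key step is then a case analysis. If $\Sigma=\{\alpha_i,\alpha_j\}$ has two elements, then $\beta_1=\alpha_i$ and $\beta_2=\alpha_j$ are the only height-$1$ roots. But $\lieg_-$ is generated by $\lieg_{-1}$, and for the grading to be a genuine $|k|$-grading we need $[\lieg_{-1},\lieg_{-1}]=\lieg_{-2}$; more importantly, no positive root other than $\alpha_i,\alpha_j$ may have $\Sigma$-height $1$. In particular $\alpha_i+\alpha_j$, if it is a root, has height $2$, which is fine, but any simple root $\alpha_k\notin\Sigma$ adjacent to $\alpha_i$ (or $\alpha_j$) produces the root $\alpha_i+\alpha_k$ of height $1$ — a contradiction. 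Hence no simple root outside $\Sigma$ is adjacent to $\Sigma$, so $\Sigma$ together with the connected component of the Dynkin diagram it sits in must be all of $\Sigma$; since $\lieg$ is simple the diagram is connected, forcing $\Delta^0=\{\alpha_i,\alpha_j\}$ and $\rank\lieg=2$. If $\Sigma=\{\alpha_i\}$ is a singleton, the positive roots of height $1$ are those with $\alpha_i$-coefficient $1$. Writing the Dynkin diagram of $\lieg$ and counting, one checks that having exactly two such roots forces $\rank\lieg=2$: for rank $\ge 3$ and any choice of the marked node, a short walk along the diagram from $\alpha_i$ through its neighbours produces a third distinct root with $\alpha_i$-coefficient $1$ (for the classical series $A_\ell,B_\ell,C_\ell,D_\ell$ this is the chain $\alpha_i,\alpha_i+\alpha_{i\pm1},\dots$; for $G_2$ with $\ell=2$ there is nothing to check; $F_4$ and the $E$ series are rank $\ge 4$ and handled the same way). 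Thus in every case $\rank\lieg=2$.

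I would organize the rank-$2$ exclusion for the singleton case more uniformly by noting that the number of height-$1$ roots equals the number of positive roots $\beta$ with $n_{\alpha_i}(\beta)=1$, and that this number is $1+(\text{number of positive roots of the Levi factor }\lieg_0^{ss}\text{ that the ``}\alpha_i\text{-string'' reaches})$; concretely, $\alpha_i$ plus each simple root adjacent to it gives a distinct height-$1$ root, so $\#\Delta_\Sigma^1\ge 1+\#\{\text{simple roots adjacent to }\alpha_i\}$. For $\#\Delta_\Sigma^1=2$ we need $\alpha_i$ to have exactly one neighbour and, moreover, no further height-$1$ roots beyond $\alpha_i$ and $\alpha_i+\alpha_{i+1}$; a direct inspection of the connected Dynkin diagrams shows this happens only when the diagram has two nodes, i.e. $\rank\lieg=2$ (the offending longer diagrams each yield at least a third height-$1$ root such as $\alpha_i+\alpha_{i+1}+\alpha_{i+2}$ or a branch root).

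The main obstacle is the bookkeeping in the singleton case: one must be sure that ``exactly two height-$1$ roots'' genuinely fails for every marked node of every connected Dynkin diagram of rank $\ge 3$, including the multiply-laced and exceptional ones, rather than just the obvious $A_\ell$ chain. This is a finite check, but it is the part that needs care; everything else — reducing to $|\Sigma|\le 2$, and the adjacency argument in the two-element case — is short and structural.
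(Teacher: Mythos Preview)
Your argument is correct, and the underlying mechanism --- adjacent simple roots force extra height-$1$ roots --- is the same as the paper's. The organization differs, however, and the paper's version is cleaner. Rather than splitting on $|\Sigma|$, the paper simply notes that for $\rank\lieg\ge 3$ any element of $\Sigma$ lies in some connected triple $\{\alpha,\beta,\gamma\}$ of simple roots (a path in the Dynkin diagram), and then checks that among the six roots $\alpha,\ \beta,\ \gamma,\ \alpha+\beta,\ \beta+\gamma,\ \alpha+\beta+\gamma$ there are always at least three of $\Sigma$-height $1$, whatever the intersection $\Sigma\cap\{\alpha,\beta,\gamma\}$ is. Since these six roots involve only $\alpha,\beta,\gamma$, their heights are determined by that intersection alone, so a short case check (seven nonempty subsets) finishes the proof uniformly. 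This single device subsumes both of your cases and, in particular, completely sidesteps the type-by-type inspection of Dynkin diagrams that you flag as the ``main obstacle'' in the singleton case. Your adjacency argument for $|\Sigma|=2$ is a pleasant alternative for that case, but overall the paper's packaging is shorter and avoids any appeal to the classification.
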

\begin{proof}
	We show that if $\rank \lieg >2$ then  $\dim \lieg_{1} >  2$.
	Let $\Sigma  \subset \Delta^0$ be any non-empty subset of the simple roots  $\Delta^0$ for $\lieg$.
	If $\rank \lieg > 2$, then 
	 the set $\Sigma$ must non-trivially intersect a set of 3 connected  simple roots $\{\alpha, \beta, \gamma\}$. 
	Then  $\alpha$,  $\alpha +\beta$, $\alpha +\beta + \gamma$,  $\beta$, $\beta +\gamma$, and 
	$\gamma$ are all roots. Regardless of which  of these 3 simple roots $\alpha$, $\beta$, $\gamma$ are in $\Sigma$, there will alway be at least 3 roots with  height 1 relative to $\Sigma$ and therefore $\dim \lieg_{-1} \geq 3$.
	For example, if the intersection  with $\Sigma$ contains just $\beta$, then  $\beta$,  $\alpha +\beta$, and $\beta +\gamma$  
	have height 1 while if the intersection contains $\alpha$ and $\gamma$, then the roots $\alpha$ and  
	$\alpha + \beta$, $\beta + \gamma$, and $\gamma$ have height 1. 
\end{proof}

\begin{Theorem} 
\StTag{Rank2Monge}
	Let $\lieg$ be a $|k|$-graded simple Lie algebra  of Monge type with $\dim \lieg_{-1} =2$. Then the 
	possiblities are:
\begin{alignat*}{3}
1.\ & A_2 \{\alpha_1, \alpha_2\}     \quad&\quad2.\ &  B_2 \{\alpha_2\}  \ (\text{the short root})     \quad &\quad3.\ &  B_2 \{\alpha_1, \alpha_2\}  \kern 100 pt\\
4.\ &  G_2 \{\alpha_1\} \ (\text{the short root})   \quad &\quad5.\ &      G_2 \{\alpha_1, \alpha_2\} 
\end{alignat*}
\end{Theorem}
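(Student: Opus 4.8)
The plan is to reduce the statement to a finite check over rank-two root systems. First, the hypothesis $\dim\lieg_{-1}=2$ together with the preceding Lemma forces $\rank\lieg=2$; since $\lieg$ is simple it is one of $A_2$, $B_2$ or $G_2$, and its $|k|$-grading is then determined by a non-empty subset $\Sigma$ of the two simple roots. There are thus at most nine gradings to inspect.

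Second, I would unwind the Monge condition under the assumption $\dim\lieg_{-1}=2$. Any codimension-one subspace $\liey\subset\lieg_{-1}$ is then one-dimensional, hence non-zero, and is automatically an abelian subalgebra, being spanned by a single vector. Since $\lieg_{-1}$ generates $\lieg_-$ we have $\lieg_{-2}=[\lieg_{-1},\lieg_{-1}]$, which is at most one-dimensional; therefore $\dim\lieg_{-2}=\dim\liey=1$ holds exactly when $\lieg_{-2}\neq 0$, that is, exactly when $k\geq 2$. So, when $\dim\lieg_{-1}=2$, being of Monge type is equivalent to the grading not being a $|1|$-grading.

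Third, I would run through the nine gradings, computing $\dim\lieg_{-1}$ as the number of positive roots $\beta$ with $\operatorname{ht}_\Sigma(\beta)=1$ from the (short) lists of positive roots, and recording whether some positive root has $\Sigma$-height at least $2$. The outcome is: for $A_2$ the two singleton subsets give $|1|$-gradings (excluded), while $\Sigma=\{\alpha_1,\alpha_2\}$ gives $\dim\lieg_{-1}=2$, $\dim\lieg_{-2}=1$; for $B_2$ the long-root singleton has $\dim\lieg_{-1}=3$ (excluded), whereas $\Sigma=\{\alpha_2\}$ (short root) and $\Sigma=\{\alpha_1,\alpha_2\}$ both have $\dim\lieg_{-1}=2$ and $k\geq 2$; for $G_2$ the long-root singleton has $\dim\lieg_{-1}=4$ (excluded), whereas $\Sigma=\{\alpha_1\}$ (short root) and $\Sigma=\{\alpha_1,\alpha_2\}$ both have $\dim\lieg_{-1}=2$ and $k\geq 2$. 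By the second step these five surviving gradings are precisely the ones of Monge type, which is the asserted list.

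There is no genuine obstacle once the reduction to rank two is in place; the remaining work is a routine tabulation. The only points requiring care are to fix consistently the convention making $\alpha_2$ the short root of $B_2$ and $\alpha_1$ the short root of $G_2$ (so that the parenthetical labels in the statement come out correctly), and to note that in dimension one the ``abelian subalgebra'' clause of Definition \StRef{MongeDef} carries no content.
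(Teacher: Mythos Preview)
Your proof is correct and follows essentially the same approach as the paper: reduce to rank $2$ via the preceding Lemma and then inspect the finitely many gradings case by case. Your explicit observation that, when $\dim\lieg_{-1}=2$, the Monge condition is equivalent to $k\geq 2$ is a clean way to organize the check; the paper uses this implicitly, excluding $A_2\{\alpha_i\}$ and $B_2\{\alpha_1\}$ because they are $|1|$-gradings (you exclude $B_2\{\alpha_1\}$ instead because $\dim\lieg_{-1}=3$, which is equally valid).
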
 
\begin{proof}
	By the above lemma  $\rank \lieg  = 2$ and hence $\lieg$ is of type 
	$A_2$, $B_2  = C_2$, or $G_2$.  The gradations not  in the above  list are $A_2 \{\alpha_1\} $, $A_2 \{\alpha_2\} $, 
	$B_2 \{\alpha_1\} $ and $G_2 \{ \alpha_2\}$, and they are not of Monge type; specifically, the gradations $A_2 \{\alpha_1\} $, $A_2 \{\alpha_2\} $, and 
	$B_2 \{\alpha_1\} $ have depth $k= 1$ while for $G_2 \{\alpha_2\}$  one easily checks 
	that $\dim \lieg_{-1} = 4$ and $\dim \lieg_{-2} = 1$.
\end{proof}

	For the rest of this section we focus on the case 	$\dim \lieg_{-1} >  2$. 
	
\begin{Proposition}  
\StTag{leader}
	Let $\lieg$ be a $|k|$-graded semi-simple Lie algebra  of Monge type with $\dim \lieg_{-1} > 2$, and let $\Sigma$ be the subset of simple roots which defines the gradation of $\lieg$. 

\noindent
	{\bf [i]} The abelian subalgebra  $\liey \subset  \lieg_{-1}$ is  $\lieg_0$-invariant.

\noindent
	{\bf [ii]} There is a 1-dimensional $\lieg_0$-invariant subspace  $\mathfrak{x}$ such that   $\lieg_{-1} = \mathfrak{x} \oplus \liey$.  

\noindent
	{\bf [iii]} There is a unique 
	simple root $\zeta \in \Sigma$  and roots $\{ \beta_1, \beta_2\ , \ldots,  \beta_d\} \subset \Delta_\Sigma^1 $ such that 
\begin{equation}
	\mathfrak{x} = \lieg_{-\zeta} \quad\text{and} \quad  \liey = \lieg_{-\beta_1} \oplus \lieg_{-\beta_2 }\oplus\dots \oplus  \lieg_{-\beta_d}.
\EqTag{lieydecomp1}
\end{equation}

\noindent
	{\bf [iv]}
	The set $\Sigma$ consists precisely of the root $\zeta$ and all roots 	
	adjacent to $\zeta$ in the Dynkin diagram for $\lieg$. 
	
\noindent
	{\bf [v]}
	If $\lieg_0$ contains no simple ideal of $\lieg$, then the Lie algebra $\lieg$ is simple.
\end{Proposition}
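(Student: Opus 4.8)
The plan is to prove the contrapositive via a connectedness argument on the Dynkin diagram. Suppose $\lieg$ is semi-simple but not simple, so it decomposes as $\lieg = \lieg' \oplus \lieg''$ (direct sum of ideals, each a nonzero sum of simple ideals), with corresponding partition of the Dynkin diagram into the two disjoint unions of connected components $\Delta^0 = (\Delta^0)' \sqcup (\Delta^0)''$ and no edge between the two parts. The grading subset splits accordingly as $\Sigma = \Sigma' \sqcup \Sigma''$ with $\Sigma' \subset (\Delta^0)'$ and $\Sigma'' \subset (\Delta^0)''$. The goal is to show that one of $\Sigma'$, $\Sigma''$ must be empty; then that factor receives the trivial grading, so $\lieg_0$ contains the whole corresponding simple ideal(s) of $\lieg$, contradicting the hypothesis.

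The key step is to invoke parts [iii] and [iv] of this same Proposition. By [iv], $\Sigma$ consists of the distinguished root $\zeta$ together with all roots adjacent to $\zeta$ in the Dynkin diagram. Every such root lies in the connected component of the diagram containing $\zeta$ — say $\zeta \in (\Delta^0)'$. Then the entire set $\Sigma$ is contained in $(\Delta^0)'$, which forces $\Sigma'' = \emptyset$. Since $\Sigma''$ is empty, $\operatorname{ht}_\Sigma(\beta) = 0$ for every root $\beta$ of the ideal $\lieg''$, hence every root space of $\lieg''$ lies in $\lieg_0$; together with the part of $\lieh$ dual to $\lieg''$ this shows $\lieg'' \subset \lieg_0$, and $\lieg''$ is a nonzero sum of simple ideals of $\lieg$ contained in $\lieg_0$. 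This contradicts the hypothesis that $\lieg_0$ contains no simple ideal of $\lieg$, so $\lieg$ must in fact be simple.

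One small point to handle carefully: the statement of [v] presupposes $\dim \lieg_{-1} > 2$ (it lives inside Proposition \StRef{leader}), so parts [iii] and [iv] are genuinely available — the uniqueness of $\zeta$ and the description of $\Sigma$ as $\zeta$ plus its neighbors are exactly what make the argument immediate. If one wanted a statement valid also when $\dim\lieg_{-1} = 2$, one would instead have to run through the list in Theorem \StRef{Rank2Monge}; but there $\rank\lieg = 2$ and the relevant algebras $A_2, B_2, G_2$ are already simple, so nothing more is needed. I do not anticipate a real obstacle here: the whole content is the observation that a connected "star" of roots around $\zeta$ cannot straddle two components of the diagram, which is precisely what [iv] gives us for free.
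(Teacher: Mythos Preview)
Your proof is correct and follows essentially the same approach as the paper: both argue by contradiction, using part {\bf [iv]} to see that $\Sigma$ lies entirely in a single connected component of the Dynkin diagram, which forces any other simple factor of $\lieg$ to sit inside $\lieg_0$. The paper states this more tersely (phrasing it as ``$\Sigma$ would be disconnected, contradicting {\bf [iv]}''), but the logical content is the same.
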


\begin{proof}
	{\bf [i]}
	Let  $\{y_1,y_2, \dots, y_d\}$ be a basis for $\liey$ and let $\{x, y_1,y_2, \dots, y_d\}$ be a basis for $\lieg_{-1}$.
	The generating condition $[\lieg_{-1}, \lieg_{-1}] = \lieg_{-2}$ and the fact that  $\dim \lieg_{-2} = \dim \liey$ 
	imply that 
	\begin{equation}
	\label{adx-isom}
	\operatorname{ad}_x : \liey \to \lieg_{-2} \text{ is an isomorphism}.
	\end{equation}
	This implies that the vectors $z_i = [x, y_i]$ 
	form a basis for $\lieg_{-2}$. Let $u \in  \lieg_0$. Since the action of $\lieg_0$  on $\lieg$ preserves the $|k|$-grading, it follows that
\begin{equation*}
	[u, y_i] =  a_i x + b_i^j y_j .
\end{equation*}
	Since the vectors $y_i$ commute, the Jacobi identity for the vectors $u$, $y_i$, $y_j$  yields
\begin{equation*}
	a_i z_j  - a_jz_i = 0 \quad\text{for all }  1 \leq i < j \leq d.
\end{equation*}
	Since $d >1$ this implies that $a_i= 0$ and hence $[u, y_i] \in \liey$. This proves {\bf [i]}. 

\smallskip
\noindent	
{\bf [ii]} Since $\lieg$ is a complex semi-simple Lie algebra, 
	$\lieg_0$ is a reductive Lie algebra and the center ${\mathfrak z}(\lieg_0)\subset \lieh$ by \cite[Theorem 3.2.1]{cap-slovak:2009a}. Hence the center acts on $\lieg_{-1}$ by semi-simple endomorphisms. Therefore the representation of $\lieg_0$ on $\lieg_{-1}$ is completely reducible (see for example \cite[p. 316]{cap-slovak:2009a}). 
	Thus the $\lieg_0$-invariant subspace $\liey$ admits a $\lieg_0$-invariant complement $\liex$.

\smallskip
\noindent	
{\bf [iii]}
	Since the Cartan subalgebra $\lieh$ of  $\lieg$  used to define the root space decomposition of $\lieg$ is, by definition, contained in  $\lieg_0$,  
	the $\lieg_0$-invariant subspaces  $\liex$ and $\liey$ must be direct sums of the (1-dimensional) root spaces corresponding to
	roots in  $\Delta_\Sigma^{1}$.  This proves equation \EqRef{lieydecomp1}.

	 Put $\liex =  \lieg_{-\zeta} $.  In order to complete the proof of {\bf[iii]}, we must verify  that $\zeta$ is a simple root.  
	Suppose not.  Since $\zeta$ is  a positive root of height 1, we can therefore write $\zeta  = \zeta' + \zeta''$,
	where $\zeta'$ is a positive root  of height 0 and  $\zeta''$  is a positive root of height 1. Then, on the one hand, 
\begin{equation*}
	[\lieg_{\zeta'}, \liex] =  [\lieg_{\zeta'}, \lieg_{-\zeta}] = \lieg_{- \zeta''}.
\end{equation*} 
	On the other hand, $\lieg_{\zeta'} \subset \lieg_0$ and so $[\lieg_{\zeta'}, \liex]  \subset \liex$ since $\liex$ is $\lieg_0$-invariant.  This contradicts
	the above equation and therefore $\zeta$ must be a simple root which belongs to $\Sigma$.
 
\smallskip
\noindent	
{\bf [iv]}
	Let $\beta \in \Sigma\backslash {\zeta}$ and
	let $x  \in \lieg_{-\zeta}$ and $y \in \lieg_{-\beta}$  be non-zero vectors. By \eqref{adx-isom}, 
	$[x,y] \in \lieg_{-2}$ is non-zero,
	 $\zeta + \beta$ must be a root, and therefore $\beta$ is adjacent to $\zeta$ in the Dynkin diagram for $\lieg$.   
	Conversely, let $\beta$ be any simple  root adjacent to $\zeta$.  Then $\beta + \zeta$ is a  root  
	and $[\lieg_{-\beta}, \lieg_{-\zeta}] = \lieg_{-\beta - \zeta}$.
	If  $\beta\notin \Sigma$, then $\beta \in\Delta^0_\Sigma$ and therefore,  by the $\lieg_0$-invariance of $\lieg_{-\zeta}$, 	
	$[\lieg_{-\beta}, \lieg_{-\zeta}] \subset \lieg_{-\zeta}$.
	This is a contradiction and hence $\beta\in \Sigma$.
	
\smallskip
\noindent	
{\bf [v]}
	Suppose that $\lieg = \liel \oplus \liek$, where  $\liel $ and $\liek$ are semi-simple. The condition that $\lieg_0$ contains no simple ideal of $\lieg$ 
	implies that $\Sigma$ must contain simple roots of $\liel$ and $\liek$. Therefore $\Sigma$ is disconnected in the Dynkin diagram of $\lieg$, which contradicts {\bf [iv]}. 
\end{proof}
	
	In view of {\bf [v]}, we henceforth assume that $(\lieg, \Sigma)$ is a parabolic geometry of Monge type with $\lieg$ simple. By Proposition \StRef{leader}, there is a simple root $\zeta$ such that all the other roots in $\Sigma$ are connected to $\zeta$ 
	in the Dynkin diagram for $\lieg$.  We say that the root $\zeta$ is the {\deffont leader} of $\Sigma$. However not every simple root of $\lieg$ can serve as a leader for a parabolic geometry of Monge type. 	 To complete our characterization, we now turn our attention to the gradation of $\lieg$ by the leader $\zeta$ itself, and in particular to the decomposition of the semi-simple part $\lieg_{\zeta, 0}^{ss}$ of its $0$-grading component. 
	By virtue of the connectivity of $\Sigma$, there is a  one-to-one correspondence between the remaining roots 
	$\Sigma\backslash\zeta$ and the connected components of 
	graph obtained by removing the node $\zeta$ in the Dynkin diagram for $\lieg$.  
	Label these  connected components 
	by $\Upsilon_\alpha$ for  $\alpha \in \Sigma\backslash\zeta$ so that
\begin{equation*}
	\Delta^0 = \{\zeta\}\cup \bigcup_{\alpha\in \Sigma \backslash \zeta} \Upsilon_\alpha.
\end{equation*}
	Let  $\lieg(\Upsilon_\alpha)$ be the complex simple Lie algebra 
	with Dynkin diagram
	$\Upsilon_\alpha$.
		Then by \cite[Proposition 3.2.2]{cap-slovak:2009a} we have the following decomposition 
	$$
	\lieg_{\zeta, 0}^{ss} = \bigoplus_{\alpha\in \Sigma\backslash \zeta} \lieg(\Upsilon_\alpha).
	$$

\begin{Theorem}
\StTag{ClassificationOfMonge}
	Let $\lieg$ be a parabolic geometry of a complex simple Lie algebra as determined by the set of simple 
	roots  $\Sigma$.  If  $\dim \lieg_{-1} > 2$, then  $\lieg$ is a parabolic geometry of Monge type if and only if

\noindent
	{\bf [i]} there is root $\zeta \in \Sigma$ which is adjacent to every other root in $\Sigma$ in the Dynkin diagram of $\lieg$; and

\noindent
	{\bf [ii]}  For each $\alpha\in \Sigma\backslash \zeta$, the parabolic geometry for the complex simple Lie algebra $\lieg(\Upsilon_\alpha)$ defined by the root $\{\alpha\}$ is 
	 $|1|$-graded.
\end{Theorem}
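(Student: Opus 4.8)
The plan is to prove both implications by translating the Monge conditions into statements about the gradation defined by $\zeta$ alone. First I would establish the forward direction. Assume $(\lieg,\Sigma)$ is of Monge type with $\dim\lieg_{-1}>2$. Condition {\bf [i]} is precisely Proposition \StRef{leader}{\bf [iv]}, so nothing new is needed there. For {\bf [ii]}, the key observation is that the subalgebra $\liey = \bigoplus_{\alpha\in\Sigma\backslash\zeta}\lieg_{-\beta_\alpha}\oplus(\text{higher }\zeta\text{-height terms?})$ — no: by Proposition \StRef{leader}{\bf [iii]}, $\liey$ is a \emph{sum of root spaces} $\lieg_{-\beta}$ with $\beta$ of $\Sigma$-height $1$, and the condition ``$\liey$ is abelian'' says $[\lieg_{-\beta_i},\lieg_{-\beta_j}]=0$ for all such $\beta_i,\beta_j$, i.e. $\beta_i+\beta_j$ is never a root. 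I would re-grade $\lieg$ by $\zeta$: each $\lieg_{-\beta_i}$ lies in $\lieg_{\zeta,-1}$ (the $\zeta$-height $-1$ part), and in fact the $\lieg(\Upsilon_\alpha)$-isotypic piece of $\lieg_{\zeta,-1}$ is exactly the root spaces of $\lieg(\Upsilon_\alpha)$-roots of $\Sigma$-height $1$, which is the $\lieg(\Upsilon_\alpha)_{\{\alpha\},-1}$ component of the $\{\alpha\}$-gradation of $\lieg(\Upsilon_\alpha)$. The abelian condition on $\liey$ then forces each such component, together with its own negative part inside $\lieg(\Upsilon_\alpha)$, to close up in a single step: concretely, $[\lieg(\Upsilon_\alpha)_{\{\alpha\},-1},\lieg(\Upsilon_\alpha)_{\{\alpha\},-1}]\subset\lieg(\Upsilon_\alpha)_{\{\alpha\},-2}$, and this sits inside the $\Sigma$-height $2$ part which — being built only from roots of $\lieg(\Upsilon_\alpha)$ plus no $\zeta$ — cannot contribute to $\lieg_{-2}$, whereas $\lieg_{-2}=\ad_x(\liey)$ has height $2$ with $n_\zeta=1$. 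Hence $\lieg(\Upsilon_\alpha)_{\{\alpha\},-2}=0$, which is exactly the statement that the $\{\alpha\}$-gradation of $\lieg(\Upsilon_\alpha)$ is $|1|$-graded.

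For the converse, assume {\bf [i]} and {\bf [ii]}. I would construct $\liey$ explicitly and verify Definition \StRef{MongeDef}. Set $\liex=\lieg_{-\zeta}$ and $\liey=\bigoplus_{\beta}\lieg_{-\beta}$, the sum over all height-$1$ roots $\beta\neq\zeta$; then $\dim\lieg_{-1}=1+\dim\liey$, so $\liey$ has codimension $1$. It remains to check two things: $\liey$ is abelian, and $\dim\lieg_{-2}=\dim\liey$. For the first, take height-$1$ roots $\beta_i,\beta_j\neq\zeta$ and suppose $\beta_i+\beta_j$ is a root. Since $\Sigma$-height is additive, $\beta_i+\beta_j$ has $\Sigma$-height $2$ and $\zeta$-coefficient equal to the sum of the $\zeta$-coefficients of $\beta_i$ and $\beta_j$. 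Now distinguish cases: if $\beta_i,\beta_j$ lie in the same $\Upsilon_\alpha$, then $\beta_i+\beta_j$ is a root of $\lieg(\Upsilon_\alpha)$ of $\{\alpha\}$-height $2$, contradicting the $|1|$-gradedness in {\bf [ii]}; if they lie in different components $\Upsilon_\alpha,\Upsilon_{\alpha'}$, then $\beta_i+\beta_j$ is a root whose support meets two components of $\Delta^0\backslash\{\zeta\}$ without containing $\zeta$, which is impossible since the support of any root is connected in the Dynkin diagram. Hence no such sum is a root and $\liey$ is abelian. For the dimension count, observe $[\liex,\liey]=\lieg_{-2}$ and $\ad_x:\liey\to\lieg_{-2}$ is injective — injectivity because if $[x,y_\beta]=0$ with $y_\beta\in\lieg_{-\beta}$ a root vector and $\beta$ adjacent to $\zeta$, then $\zeta+\beta$ is a root forces $[\lieg_{-\zeta},\lieg_{-\beta}]=\lieg_{-\zeta-\beta}\neq0$, a contradiction; so $\dim\lieg_{-2}\geq\dim\liey$. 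For the reverse, I would argue that \emph{every} root of $\Sigma$-height $2$ has the form $\zeta+\beta$ for some height-$1$ root $\beta\neq\zeta$: any height-$2$ positive root $\gamma$ with $n_\zeta(\gamma)\geq2$ would, by {\bf [i]} and connectivity arguments together with {\bf [ii]} controlling the multiplicities within each $\Upsilon_\alpha$, be excluded, while $n_\zeta(\gamma)=1$ forces $\gamma-\zeta$ to be a sum of simple roots with $\Sigma$-height $1$ lying in a single $\Upsilon_\alpha$, hence (by {\bf [ii]}) itself a single simple root $\beta$. Thus $\dim\lieg_{-2}\leq\dim\liey$, giving equality.

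The main obstacle I anticipate is the last dimension bound, namely showing that every $\Sigma$-height-$2$ root is of the form $\zeta+\beta$ with $\beta$ a simple root adjacent to $\zeta$. The subtlety is controlling roots $\gamma$ with $n_\zeta(\gamma)=1$ whose remaining part $\gamma-\zeta$ has $\Sigma$-height $1$: a priori this part could be a non-simple positive root of some $\lieg(\Upsilon_\alpha)$ of $\{\alpha\}$-height $1$. Hypothesis {\bf [ii]} — that the $\{\alpha\}$-gradation of $\lieg(\Upsilon_\alpha)$ is $|1|$-graded — is exactly what rules this out: in a $|1|$-gradation the height-$1$ root spaces are, as a $\lieg(\Upsilon_\alpha)_0$-module, irreducible and built from roots whose $\alpha$-coefficient is exactly $1$, but one must still confirm those roots are all simple (equivalently, that $\lieg(\Upsilon_\alpha)_{\{\alpha\},-1}$ is abelian and its root-support consists of simple roots — which for a $|1|$-gradation follows because $[\lieg_{\{\alpha\},-1},\lieg_{\{\alpha\},-1}]\subset\lieg_{\{\alpha\},-2}=0$ and each height-one root $\beta$ satisfies $\beta-\alpha\in\Delta^0_{\{\alpha\}}$ would again contradict abelianness unless $\beta=\alpha$... more carefully, via the standard fact that a $|1|$-graded simple Lie algebra has its $\pm1$ parts abelian, and then checking root supports directly). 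I would handle this by invoking the known classification of $|1|$-gradations (from \cite{cap-slovak:2009a}) rather than re-deriving these structural facts, and cross-referencing with the rank-$2$ analysis already completed in Theorem \StRef{Rank2Monge} and Lemma~2.1 for the boundary cases.
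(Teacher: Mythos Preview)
Your overall architecture is right and matches the paper's, but you are working much harder than necessary in two places, and the ``main obstacle'' you flag is in fact no obstacle at all.

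\smallskip
\textbf{The converse dimension count.} You have already shown $\liey$ is abelian. Since $\lieg_{-\zeta}$ is one-dimensional (so $[\lieg_{-\zeta},\lieg_{-\zeta}]=0$) and the grading is a $|k|$-grading (so $[\lieg_{-1},\lieg_{-1}]=\lieg_{-2}$ by the generating condition), you get immediately
\[
\lieg_{-2}=[\lieg_{-1},\lieg_{-1}]=[\lieg_{-\zeta},\liey]=\ad_x(\liey).
\]
Thus surjectivity of $\ad_x:\liey\to\lieg_{-2}$ is free, and only injectivity is needed to conclude $\dim\lieg_{-2}=\dim\liey$. There is no need to classify the height-$2$ roots directly, and certainly no need to invoke the classification of $|1|$-gradations. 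This is exactly what the paper does, packaged as Lemma \StRef{rootfacts}\,{\bf[iii]}. Your entire final paragraph can be deleted.

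\smallskip
\textbf{Injectivity.} Your injectivity argument as written only treats $\beta$ a \emph{simple} root adjacent to $\zeta$, but a general height-$1$ root $\beta\in\Upsilon^1_\alpha$ has the form $\beta=\alpha+\sum n_i\beta_i$ with $\beta_i\in\Upsilon_\alpha\backslash\alpha$. The fix is clean: since $\zeta$ is adjacent to $\alpha$ but orthogonal to every $\beta_i$, one has $(\beta,\zeta)=(\alpha,\zeta)<0$, hence $\zeta+\beta$ is a root. This is the content of the paper's Lemma \StRef{rootfacts}\,{\bf[iii]}.

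\smallskip
\textbf{The forward direction.} Same simplification applies. Once you know (Proposition \StRef{leader}) that $\liey=\bigoplus_{\alpha}\liey_{-\alpha}$ with $\liey_{-\alpha}=\lieg(\Upsilon_\alpha)_{\{\alpha\},-1}$, the abelian hypothesis on $\liey$ forces each $\liey_{-\alpha}$ abelian, whence $\lieg(\Upsilon_\alpha)_{\{\alpha\},-2}=[\liey_{-\alpha},\liey_{-\alpha}]=0$ by the generating condition for the $\{\alpha\}$-grading. That is the $|1|$-gradedness. Your detour through the $\dim\lieg_{-2}=\dim\liey$ condition and the $\zeta$-coefficient bookkeeping is correct in spirit but unnecessary.
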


	In order to prove this theorem, we consider the set of roots $\Upsilon^1_\alpha$ of $\lieg(\Upsilon_\alpha)$ with height 1 relative to the gradation by $\{\alpha\}$, that is, 
\begin{equation} 
\label{def-upsilon1}
	\Upsilon^1_\alpha   = \{\beta \in \Delta\  |\  \beta  = \alpha +   \sum_{i=1}^m n_i \beta_i  \quad 
	\text{where $\beta_i  \in \Upsilon_\alpha \backslash \alpha$, $n_i > 0$, and $m\geq 0$} \}.
\end{equation}
	Furthermore, define  subspaces of $\lieg$ by
\begin{equation}
	\liey_{-\alpha}  =   \bigoplus_{\beta\in\Upsilon^1_\alpha} \lieg_{-\beta}.
\EqTag{lieyalpha}
\end{equation}
These are the $-1$-grading components of $\lieg(\Upsilon_\alpha)$ with respect to $\{\alpha\}$. 
	The proof of Theorem \StRef{ClassificationOfMonge} depends on the following lemma. 

\begin{Lemma}   \StTag{rootfacts}
Let $\Sigma$ be a set of simple roots satisfying condition {\bf [i]} of Theorem \StRef{ClassificationOfMonge}. 

\smallskip
\noindent
{\bf [i]} Then 
$
\Delta^1_\Sigma = \{\zeta\} \cup \bigcup_{\alpha\in \Sigma\backslash\zeta}\ \Upsilon^1_\alpha,
$
and hence we have the following decomposition
\begin{equation}
	\lieg_{-1}  =  \lieg_{-\zeta} \oplus \bigoplus_{\alpha \in \Sigma\backslash\zeta}\liey_{-\alpha}. 
\EqTag{lieydecomp}
\end{equation}

\smallskip
\noindent
{\bf [ii]} If  $\beta \in \Upsilon^1_\alpha$ and $\beta' \in \Upsilon^1_{\alpha'}$ with $\alpha \neq \alpha'$, then $\beta +\beta'$ is not a root, and hence
\begin{equation}
	[\liey_{-\alpha}, \liey_{-\alpha'}]  = 0.
\EqTag{bracketliey}
\end{equation}

\smallskip
\noindent
{\bf [iii]}  If $\beta \in  \Upsilon^1_\alpha$ then $\zeta +\beta \in  \Delta$, and hence $\dim [\lieg_{-\zeta}, \liey_{-\alpha}] = \dim \liey_{-\alpha}$. 

\smallskip
\noindent
{\bf [iv]} If $\gamma \in \Delta^0_\Sigma$,  $\beta \in  \Upsilon^1_\alpha$  and $\gamma + \beta \in \Delta$, then $\gamma +\beta \in  \Upsilon^1_\alpha$. Thus the $\liey_{-\alpha}$ in \EqRef{lieydecomp} are $\lieg_0$-invariant subspaces of $\lieg_{-1}$.
\end{Lemma}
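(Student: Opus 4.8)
\noindent\emph{Proof strategy.} All four statements are combinatorial facts about the root system of $\lieg$, and the plan is to prove them by direct book-keeping, using three standard ingredients: the Dynkin diagram of the simple Lie algebra $\lieg$ is a tree; the support of any root is a connected subdiagram; and for roots $\gamma,\delta$ with $\gamma+\delta\neq 0$ one has $[\lieg_\gamma,\lieg_\delta]=\lieg_{\gamma+\delta}$, which is nonzero precisely when $\gamma+\delta\in\Delta$. I will also use freely the decomposition $\Delta^0=\{\zeta\}\cup\bigcup_{\alpha\in\Sigma\backslash\zeta}\Upsilon_\alpha$ recorded just above: because $\lieg$ is simple the $\Upsilon_\alpha$ are the connected components of the diagram with the node $\zeta$ deleted, $\Sigma\backslash\zeta$ is exactly the set of neighbours of $\zeta$, each $\Upsilon_\alpha$ contains exactly one such neighbour (namely $\alpha$), and no edge joins two distinct $\Upsilon_\alpha$.

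For {\bf [i]} I would take a root $\beta\in\Delta_\Sigma^1$ — necessarily positive, since its $\Sigma$-height is $1>0$ — and observe that exactly one simple root of $\Sigma$ occurs in $\beta$, with coefficient $1$. If that root is $\zeta$, then no neighbour of $\zeta$ occurs in $\beta$ (all neighbours of $\zeta$ lie in $\Sigma$), so connectedness of $\operatorname{supp}(\beta)$ forces $\beta=\zeta$; if it is some $\alpha\in\Sigma\backslash\zeta$, then $\zeta\notin\operatorname{supp}(\beta)$, so $\operatorname{supp}(\beta)$ stays inside the single component $\Upsilon_\alpha$ and $\beta\in\Upsilon_\alpha^1$. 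The reverse containments are immediate from the definitions, and \EqRef{lieydecomp} follows since $\lieg_{-1}=\bigoplus_{\beta\in\Delta_\Sigma^1}\lieg_{-\beta}$. Part {\bf [ii]} is then quick: $\operatorname{supp}(\beta)\subseteq\Upsilon_\alpha$ and $\operatorname{supp}(\beta')\subseteq\Upsilon_{\alpha'}$ lie in distinct components of the $\zeta$-deleted diagram and $\zeta$ is in neither, so $\operatorname{supp}(\beta+\beta')$ is disconnected and $\beta+\beta'\notin\Delta$; summing the relations $[\lieg_{-\beta},\lieg_{-\beta'}]=0$ over these root spaces gives \EqRef{bracketliey}.

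For {\bf [iii]} the plan is a $\zeta$-string argument. Since $\beta\in\Upsilon_\alpha^1$ has $\zeta$-coefficient $0$, the difference $\beta-\zeta$ has coefficients of both signs and so is not a root; hence the $\zeta$-string through $\beta$ starts at $\beta$, and $\zeta+\beta\in\Delta$ if and only if $\langle\beta,\zeta^\vee\rangle<0$. Now $\langle\beta,\zeta^\vee\rangle=\sum_{\gamma\neq\zeta}n_\gamma\langle\gamma,\zeta^\vee\rangle$ is a sum of non-positive terms, and the $\gamma=\alpha$ term is strictly negative because $n_\alpha=1$ and $\alpha$ is adjacent to $\zeta$; hence $\zeta+\beta\in\Delta$. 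With $\lieg_{-\zeta}$ one-dimensional, say spanned by $x$, it follows that $\operatorname{ad}_x$ maps each root space $\lieg_{-\beta}\subset\liey_{-\alpha}$ onto $\lieg_{-\zeta-\beta}$; the targets, indexed by the distinct roots $\zeta+\beta$, are linearly independent, so $\operatorname{ad}_x$ is injective on $\liey_{-\alpha}$ and $\dim[\lieg_{-\zeta},\liey_{-\alpha}]=\dim\liey_{-\alpha}$.

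For {\bf [iv]}, $\gamma+\beta$ has $\Sigma$-height $0+1=1$, so by {\bf [i]} it is either $\zeta$ or lies in some $\Upsilon_{\alpha'}^1$. The value $\zeta$ is impossible: comparing $\alpha$- and $\zeta$-coefficients, $\gamma=\zeta-\beta$ would have $\alpha$-coefficient $-1$ and $\zeta$-coefficient $+1$, contradicting that $\gamma$ is a root. The case $\alpha'\neq\alpha$ is excluded because $\gamma$ has $\alpha$-coefficient $0$ (as $\gamma\in\Delta_\Sigma^0$) while $\beta$ has $\alpha$-coefficient $1$, so $\gamma+\beta$ has $\alpha$-coefficient $1$ and cannot be supported in $\Upsilon_{\alpha'}$. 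Hence $\gamma+\beta\in\Upsilon_\alpha^1$. The $\lieg_0$-invariance of $\liey_{-\alpha}$ then follows: the Cartan part of $\lieg_0$ preserves root spaces, and for $\gamma\in\Delta_\Sigma^0$ the bracket $[\lieg_\gamma,\lieg_{-\beta}]\subseteq\lieg_{\gamma-\beta}$ is either zero or, applying what was just shown with $-\gamma\in\Delta_\Sigma^0$ in place of $\gamma$, the root space of $-(\beta-\gamma)$ with $\beta-\gamma\in\Upsilon_\alpha^1$; in both cases it lies in $\liey_{-\alpha}$. I expect the only step with genuine content to be the claim in {\bf [iii]} that $\zeta+\beta$ is always a root; the recurring thing to be careful about is the interplay between connectedness of a root's support and the tree structure of the Dynkin diagram, together with keeping track of which roots in sight are positive.
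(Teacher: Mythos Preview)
Your proof is correct and follows essentially the same route as the paper: connected support of roots in the Dynkin tree for {\bf [i]} and {\bf [ii]}, the sign of $(\beta,\zeta)$ (equivalently your $\zeta$-string argument) for {\bf [iii]}, and reducing {\bf [iv]} to {\bf [i]} via the $\Sigma$-height count. Your treatment is in fact more careful than the paper's in two places---you explicitly argue in {\bf [i]} why the case ``the unique $\Sigma$-root in $\beta$ is $\zeta$'' forces $\beta=\zeta$, and in {\bf [iv]} you spell out why $\gamma+\beta$ cannot be $\zeta$ or land in a different $\Upsilon^1_{\alpha'}$, whereas the paper simply asserts the conclusion.
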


\begin{proof} {\bf [i]} 
	Clearly $\Upsilon^1_\alpha \subset \Delta_\Sigma^1$ and so it suffices to show that if 
	$\beta \in \Delta^1_\Sigma\backslash \zeta$ then there is  a root $\alpha \in \Sigma$ such that $\beta \in \Upsilon^1_\alpha$.  
	Indeed, since $\beta$ has  height 1 with respect to $\Sigma$, there is a root $\alpha \in \Sigma$  
	and simple roots $\beta_i \in \Delta^0\backslash\Sigma$ such that 
\begin{equation}
	\beta  = \alpha +   \sum_{i=1}^m n_i \beta_i  \quad 
	\text{where $n_i > 0$ and $m\geq 0$}.
\end{equation} 
	Since $\beta$ is a root, the set of simple roots   $ \{\alpha,\beta_1, \dots,  \beta_m\}$  must define  a connected subgraph of the Dynkin diagram for $\lieg$. 
	Therefore $\{\alpha,\beta_1, \dots,  \beta_m\} \subset \Upsilon_\alpha$. 
	This equation implies that  $\beta_i \in \Upsilon_\alpha\backslash \alpha$ and 
	$\beta \in  \Upsilon^1_\alpha$. 

\smallskip
\noindent
{\bf [ii]}
	In view of \eqref{def-upsilon1}, the roots $\beta \in \Upsilon^1_\alpha$ and $\beta' \in \Upsilon^1_{\alpha'}$, with $\alpha \neq \alpha'$, are given by  
\begin{equation}
	\beta  = \alpha +   \sum_{i=1}^m n_i \beta_i   \quad\text{and}\quad \beta'  = \alpha' +   \sum_{i=1}^{m'} n_i' \beta'_i. 
\end{equation}
	Since $\Upsilon_\alpha$ and $\Upsilon_{\alpha'}$ are disjoint, 
	the totality of roots $\{\alpha,  \alpha' ,  \beta_i   ,\beta'_i \}$ is not a connected subgraph  in the Dynkin diagram of $\lieg$ and therefore 
	$\beta + \beta'$ can not be a root. Consequently $[\lieg_{-\beta}, \lieg_{-\beta'}]=0$ and \EqRef{bracketliey} follows. 
	
\smallskip
\noindent
{\bf [iii]} Let $(\cdot, \cdot)$ be the positive-definite inner product on the root space induced from the Killing form.
	Since $\zeta$ is adjacent to $\alpha$ but not any of the $\beta_i$, it follows that
	$$(\beta, \zeta)  = \big(\alpha + \sum_{i=1}^m n_i \beta_i, \zeta\big) = (\alpha , \zeta) <0,$$
	and therefore $\beta +\zeta$ is a root by \cite[p. 324 (6)]{fulton-harris:1991a}. 

\smallskip
\noindent
{\bf [iv]}
	We note that $\gamma + \beta \in \Delta_\Sigma^1$, and then use {\bf [i]} to conclude that $\gamma + \beta \in \Upsilon_\alpha^1$. 
	The $\lieg_0$-invariance of the summands $\liey_{-\alpha}$ immediately follows.
\end{proof}

\begin{proof}[Proof of Theorem \StRef{ClassificationOfMonge}]
	Suppose that  $\lieg$ is a parabolic geometry of Monge type. Then condition {\bf [i]} follows from Proposition \StRef{leader}.
	From  \EqRef{lieydecomp1} and \EqRef{lieydecomp}, we know that 
	\begin{equation}
	\label{def-liey}
	\liey = \bigoplus_{\alpha \in \Sigma\backslash\zeta}\liey_{-\alpha}.
	\end{equation}
	Since $\liey$ is abelian,  each of the summands  $\liey_{-\alpha}$  in  this decomposition must be abelian. Since $\liey_{-\alpha}$ is the $-1$-grading component for 
	 the gradation of  $\lieg(\Upsilon_\alpha)$ defined by $\alpha$, this must be a $|1|$-gradation and condition {\bf [ii]} in Theorem \StRef{ClassificationOfMonge} is established.

	Conversely,  given a  $|k|$-grading defined by $\Sigma$ such that {\bf [i]} and {\bf [ii]} hold, define  $\liey$ by   \eqref{def-liey}. By \EqRef{lieydecomp}, 
	 $\liey$ is a co-dimension $1$ subspace of $\lieg_{-1}$. 
	We now check the conditions of Definition \StRef{MongeDef} for  a parabolic Monge geometry.  
	To prove that $\liey$  is abelian, we  
	first note that  each summand $\liey_{-\alpha}$ is abelian  by hypothesis {\bf [ii]}.   Equation \EqRef{bracketliey} then proves that
	$\liey$  is abelian.  
	That  the dimension of $[\lieg_{-\zeta} , \liey]$ equals the dimension of $\liey$ follows directly from part {\bf [iii]} of Lemma \StRef{rootfacts}. 
\end{proof}

	An explicit list of parabolic geometries of Monge type can now be constructed from the classification of 	
	$|1|$-graded simple Lie algebras given in the table on page 297 of \cite{cap-slovak:2009a}.  We see that condition {\bf [ii]} of Theorem \StRef{ClassificationOfMonge} holds if and only if the graded simple algebras $\lieg(\Upsilon_{\alpha})$ are 
		$A$, $B$, $C$, $D$, $E_6$ and $E_7$ with the gradation given by a simple root at the end of its Dynkin diagram as specified in the table. 
	In the case that $\lieg(\Upsilon_\alpha)$ is of type $B_m$, $\alpha$ cannot equal $\alpha_m$ which means that
	the original $|k|$-graded algebra $\lieg$ cannot be $F_4$ with $\zeta = \alpha_4$, the short root.  Similarly, in the case that 
	$\lieg(\Upsilon_\alpha)$ is of type $C_m$, $\alpha$ cannot equal $\alpha_1$ which means that
	the original $|k|$-graded algebra $\lieg$ cannot be $C_m $ with $\zeta = \alpha_i$, for $1 \leq i \leq m-2$,  $m \geq 3$. 
	One can check that these two exceptions occur precisely when $\Sigma$ consists of just short roots. This proves the following.
\begin{Corollary} 
\StTag{LongRoot}
	Let $\lieg$ be a parabolic geometry of a simple Lie algebra as determined by the set of simple 
	roots  $\Sigma \subset \Delta^0$.  If  $\dim \lieg_{-1} > 2$, then  $\lieg$ is a parabolic geometry of Monge type if and only if
	condition {\bf [i]} of Theorem \StRef{ClassificationOfMonge} holds and  $\Sigma$ contains a long root.
\end{Corollary}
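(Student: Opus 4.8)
The plan is to derive the corollary from Theorem \StRef{ClassificationOfMonge} by rephrasing its condition \textbf{[ii]} in terms of root lengths. Recall (from the discussion preceding Theorem \StRef{ClassificationOfMonge}, together with Proposition \StRef{leader}) that once \textbf{[i]} holds, $\Sigma$ is precisely $\{\zeta\}$ together with all simple roots adjacent to the leader $\zeta$, and that for $\alpha\in\Sigma\setminus\zeta$ the symbol $\Upsilon_\alpha$ denotes the connected component, containing $\alpha$, of the Dynkin diagram of $\lieg$ with the node $\zeta$ deleted, so that $\lieg(\Upsilon_\alpha)$ is a complex simple Lie algebra with distinguished simple root $\alpha$. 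Thus it suffices to prove: granting \textbf{[i]}, condition \textbf{[ii]} --- that the $\{\alpha\}$-grading of $\lieg(\Upsilon_\alpha)$ be $|1|$-graded for every $\alpha\in\Sigma\setminus\zeta$ --- holds if and only if $\Sigma$ contains a long root.

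First I would recall the classification of $|1|$-graded complex simple Lie algebras (the table on page~297 of \cite{cap-slovak:2009a}): the $\{\alpha\}$-grading of a simple Lie algebra is $|1|$-graded exactly when the coefficient of $\alpha$ in the highest root is $1$, so the admissible pairs are $A_m$ with $\alpha$ arbitrary, $B_m$ with $\alpha=\alpha_1$, $C_m$ with $\alpha=\alpha_m$, $D_m$ with $\alpha\in\{\alpha_1,\alpha_{m-1},\alpha_m\}$, $E_6$ with $\alpha\in\{\alpha_1,\alpha_6\}$, $E_7$ with $\alpha=\alpha_7$, and none in $F_4$, $G_2$, $E_8$. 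The fact I would single out is that among the two non-simply-laced types appearing here the distinguished node is the unique \emph{long} simple root ($\alpha_1$ in $B_m$, $\alpha_m$ in $C_m$), while in the simply-laced types all roots are long.

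Next I would exploit that $\alpha$ is adjacent to $\zeta$. Since $\alpha$ has a neighbor, namely $\zeta$, outside $\Upsilon_\alpha$, its valence inside $\Upsilon_\alpha$ is at most $2$; in particular $\alpha$ is never the branch node of $\Upsilon_\alpha$, and if $\alpha$ were of valence $2$ in $\Upsilon_\alpha$ it would be a branch node of the full diagram of $\lieg$. Combining this with the elementary structure of finite-type Dynkin diagrams --- each is a tree carrying at most one branch node and at most one multiple bond --- a short case analysis on the type of $\lieg(\Upsilon_\alpha)$ gives the following. If $\lieg(\Upsilon_\alpha)$ is of type $A$, $D$, $E_6$, or $E_7$, then $\alpha$ is forced onto one of the admissible nodes above, since any other position of $\alpha$ would create in $\lieg$ a second branch node, a second multiple bond, or a branching pattern not occurring in finite type; such a summand therefore never obstructs \textbf{[ii]}. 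If $\lieg(\Upsilon_\alpha)$ is of type $B_m$ (with $m\ge 3$; the rank-$2$ coincidence $B_2\cong C_2$ falls under the next case), then $\alpha\in\{\alpha_1,\alpha_m\}$, and \textbf{[ii]} fails for this summand precisely when $\alpha=\alpha_m$, the short root; since $\lieg$ carries at most one multiple bond the edge joining $\zeta$ to $\Upsilon_\alpha$ is single, and a chain with a single interior double bond is $F_4$, so this forces $\lieg=F_4$ with $\zeta=\alpha_4$. If $\lieg(\Upsilon_\alpha)$ is of type $C_m$, then again $\alpha\in\{\alpha_1,\alpha_m\}$, and \textbf{[ii]} fails precisely when $\alpha=\alpha_1$, again the short end; tracing the single bond to $\zeta$ and using that $\lieg$ has no branch node, this forces $\lieg$ to be of type $C_\ell$ with $\zeta$ a short simple root other than $\alpha_{\ell-1}$, that is, $\zeta=\alpha_i$ for some $1\le i\le\ell-2$.

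To finish, I would note that in each of these two exceptional configurations the set $\Sigma$, being $\{\zeta\}$ together with the neighbors of $\zeta$, consists only of short roots --- it is $\{\alpha_3,\alpha_4\}$ for $F_4$ with $\zeta=\alpha_4$, and a block of consecutive short roots within $\{\alpha_1,\dots,\alpha_{\ell-1}\}$ for $C_\ell$ with $\zeta=\alpha_i$, $i\le\ell-2$. Conversely, if $\Sigma$ consists only of short roots then $\lieg$ is non-simply-laced, and a glance at $B_\ell$, $C_\ell$, $F_4$, $G_2$ shows such a $\Sigma$ can only be one of these two (in $B_\ell$ and $G_2$ the unique short simple root has a long neighbor, so no all-short $\Sigma$ of the required form occurs). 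Hence \textbf{[ii]} holds if and only if we are in neither exceptional configuration, if and only if $\Sigma$ contains a long root; combined with Theorem \StRef{ClassificationOfMonge} this is the corollary. I expect the real work to lie in the case analysis of the third paragraph: for each diagram type of $\lieg(\Upsilon_\alpha)$ one must eliminate every non-admissible location of $\alpha$ by checking how reattaching $\zeta$ there would violate the finite-type, single-branch-node, or single-multiple-bond constraints.
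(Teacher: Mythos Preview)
Your proposal is correct and follows essentially the same approach as the paper: reduce the corollary to Theorem~\StRef{ClassificationOfMonge}, use the classification of $|1|$-gradings to see that condition {\bf [ii]} fails only when some $\lieg(\Upsilon_\alpha)$ is $B_m$ with $\alpha=\alpha_m$ or $C_m$ with $\alpha=\alpha_1$, trace these back to $\lieg=F_4$ with $\zeta=\alpha_4$ or $\lieg=C_\ell$ with $\zeta=\alpha_i$, $1\le i\le\ell-2$, and finally observe that these are exactly the cases in which $\Sigma$ consists solely of short roots. The paper compresses this into a short paragraph preceding the corollary, whereas you spell out the Dynkin-diagram case analysis (ruling out non-admissible positions of $\alpha$ in types $D$, $E_6$, $E_7$ via branch-node and finite-type constraints) in more detail than the paper does.
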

\section{Non-rigid  Parabolic Geometries of Monge type}

	Let $\lieg$ be a simple Lie algebra and let 
\begin{equation}
	\lieg = \lieg_k  \oplus \dots  \lieg_1 \oplus \lieg_0 \oplus \lieg_{-1} \oplus \dots \oplus \lieg_{-k} 
\end{equation}
	be a $|k|$-grading of $\lieg$ determined by the set of simple roots  $\Sigma \subset \Delta^0$.  We suppose that
	the parabolic geometry defined by this grading is of Monge type (see Definition  \StRef{MongeDef}) so that 
	$\Sigma$ satisifies the conditions 
	of Theorem \StRef{ClassificationOfMonge}. The purpose of this section is to determine which parabolic geometries of Monge type are non-rigid,  
	that is, we will characterize the Monge subsets of simple roots $\Sigma$ 
	with non-vanishing second  degree Lie algebra  cohomology  in positive homogeneity weight
\begin{equation}
	H^2(\lieg_{-}, \lieg)_{p}\neq 0 \quad \text{for some }p>0.
\EqTag{H2nonzero}
\end{equation}

	In a remarkable paper K. Yamaguchi  \cite{yamaguchi:1993a} gives  a complete list of all  sets of simple roots  
	for which the corresponding parabolic geometry 
	satisfies \EqRef{H2nonzero}.  Initially, we  simply  determined which of the 40 or so cases in 
	Yamaguchi's classification  were of Monge type and in this way we arrived at Theorem B.  It is a rather surprising fact that
	of all the possible sets of simple roots $\Sigma$ of Monge type,  those which are non-rigid contain either the first or the last root and 
	for the algebras  $B$, $C$,  and $D$ of rank $\geq 4$ all contain exactly  2 roots. Since these two facts alone effectively 
	reduce the proof of  Theorem B to the examination of  just a few cases and since both facts can be directly established 
	with relative ease,  we have  chosen to give the   detailed proofs here.

	We shall use Kostant's theorem \cite{kostant} to calculate the Lie algebra cohomology.\footnote{ Kostant's theorem applies more generally to
	the Lie algebra cohomology $H^q(\lieg_{-}, V)$,  where $V$ is any representation space of $\lieg$, but we limit our discussion to 
	just the case where $V = \lieg$ is the adjoint representation of $\lieg$.} To briefly describe how this calculation proceeds, we first 
	establish some standard  notation.
	Recall that we denote the set of  all roots by $\Delta$  and the  positive and negative roots by $\Delta^+$ and $\Delta^-$. 
	For a subset of simple roots $\Sigma\subset \Delta^0$, we denote by 
\begin{equation}
	\Delta^+_\Sigma=\bigcup_{k>0} \Delta_\Sigma^k
\end{equation}
	the set of roots with positive heights with respect to $\Sigma$. 

	For each simple root $\alpha_i  \in \Delta^0$ the simple Weyl reflection  $s_i$ on  the root space is defined by
	$s_i(\beta) = \beta- \langle \beta, \alpha_i\rangle\, \alpha_i$, where $\beta\in \Delta$ and $ \langle \beta, \alpha_i\rangle = \frac{2(\beta, \alpha_i)}{(\alpha_i, \alpha_i)}$.  The finite group generated by all simple Weyl reflections is the Weyl group 
	$W$ of $\lieg$.
	For any element $\sigma\in W$, we define  another set  of roots by
\begin{equation}
	\Delta_\sigma = \sigma(\Delta^-)\cap \Delta^+, 
\end{equation}
	that is, $\Delta_\sigma $ is the set of positive roots that are images of negative roots under the action of $\sigma$. 
	It is an important fact, established in many textbooks,  that if $q= \text{card} \Delta_\sigma$, 
	then $\sigma$ can be written as a product of exactly  $q$  simple Weyl reflections $s_i$, in other words,
	$\text{length}(\sigma) = \text{card} \Delta_\sigma$.
	Finally, define 
\begin{equation}
	W_\Sigma =\{\sigma\in W\,|\, \Delta_\sigma\subset \Delta^+_\Sigma\, \} \quad\text{and}\quad
	W^q_\Sigma = \{\sigma\in W_\Sigma \,|\, \text{card}\Delta_\sigma  = q \,\}.
\end{equation}
	Hasse diagrams provide an effective method for  finding the sets $W^q_\Sigma$ (see \cite[\S\S 3.2.14--16]{cap-slovak:2009a}).

	Kostant's method is based upon two key results. The first result states that the cohomology spaces $H^q(\lieg_-, \lieg)$
	are isomorphic to the the kernel of a certain (algebraic) Laplacian $\square : \Lambda^q(\lieg_-, \lieg) \to  \Lambda^q(\lieg_-, \lieg) $.
	The  forms in $\ker\square$ are said to be harmonic - they  define distinguished cohomology representatives. Since 
	$[\lieg_0, \lieg_i ]  \subset \lieg_i$,  the Lie algebra $\lieg_0$  naturally acts on  the forms  $\Lambda^q(\lieg_-, \lieg)$. 
	The second key observation is  that this action commutes with   $\square$.

	The first assertion in Kostant's theorem is that $\ker\square$ decomposes as a direct sum of irreducible representations of 
	 $\lieg_0$, each occurring with multiplicity 1,  and that there is a one-to-one correspondence between the irreducible summands
	in this decomposition and the Weyl group elements in  $W^q_\Sigma$.  For each $\sigma \in W^q_\Sigma$,  we label the corresponding
	summand by $H ^{q, \sigma}(\lieg_{-}, \lieg)$ and write	
\begin{equation}
	H^q(\lieg_-, \lieg) = \bigoplus_{\sigma \in W^q_\Sigma } H^{q, \sigma}(\lieg_{-}, \lieg).	
\end{equation} 


	
	Kostant's theorem also describes the lowest weight vector for
	$H^{q, \sigma}(\lieg_{-}, \lieg)$
	 as an irreducible $\lieg_0$-representation.\footnote{Actually Kostant \cite{kostant} studied the cohomology $H^{q, \sigma}(\lieg_{+}, \lieg)$ and gave the highest weight vector for this irreducible $\lieg_0$-representation. Through the Killing form, we have $(H^{q, \sigma}(\lieg_{+}, \lieg))^* = H^{q, \sigma}(\lieg_{-}, \lieg)$, and therefore the negative of the highest weight of the former becomes the lowest weight for the latter.} 
	 	Fix a basis $e_\alpha$ for the root space $\lieg_\alpha$, 
	and let $\omega_\alpha$ be the 1-form dual to $e_\alpha$ under the Killing form: $\omega_\alpha(x) = B(e_\alpha, x)$.  
	Let $\theta$ denote the {\deffont highest root} of $\lieg$, which is also the highest weight for the adjoint representation of $\lieg$. 
	For $\sigma \in W^q_\Sigma$, let $\Delta_\sigma = \{\beta_1, \beta_2, \ldots,  \beta_q\}$. Then 
\begin{equation}
	\omega_\sigma =  e_{-\sigma(\theta)} \otimes \omega_{-\beta_1}  \wedge  \omega_{-\beta_2}  \wedge  \dots \wedge \omega_{-\beta_q} 
\EqTag{Harmonic}
\end{equation}
is the harmonic representative for the lowest weight vector in $H^{q, \sigma}(\lieg_{-}, \lieg)$. 
	The homogeneity weight  $w_\Sigma(\omega_\sigma)$  of this form with respect to the grading
	is the homogeneity weight of all the 
	forms in $H^{q, \sigma}(\lieg_{-}, \lieg)$, since the orbit of the $\lieg_0$-action on $\omega_\sigma$ is all of $H^{q, \sigma}(\lieg_{-}, \lieg)$ and the grading element $E$ commutes with $\lieg_0$. 


\newcommand{\htSigma}{\text{\rm ht}_\Sigma}

	To calculate the  homogeneity weight  $w_\Sigma(\omega_\sigma)$ is 
			generally quite complicated but  it is possible to obtain a compact formula in the case  of immediate interest to us, 
	namely  when  $q = 2$.  Then the length of $\sigma$ is 2 and there are two simple Weyl reflections $s_i$ and $s_j$, $i \neq j$  such that
	 $\sigma=\sigma_{ij}=s_i\circ s_j$.  
\begin{Lemma}  
\StTag{Yam}
If $\sigma  = s_i \circ s_j \in W^2_\Sigma$, then $\Delta_\sigma  = \{\, \alpha_i,  s_i(\alpha_j) \,\} $, $\alpha_i\in \Sigma$, and
\begin{equation}
	w_\Sigma(\omega_\sigma)  =   -\htSigma(\theta)  + \langle \theta, \alpha_i\rangle + 1 +  
	 ( \langle \theta, \alpha_j  \rangle + 1)\,\htSigma(s_i(\alpha_j)) .
\EqTag{YamEq}
\end{equation}
	Therefore  the parabolic geometry defined by $\Sigma$ is non-rigid if  and only if 
\begin{equation}
\EqTag{YamIneq}
	\langle \theta, \alpha_i\rangle   +
	 ( \langle \theta, \alpha_j  \rangle + 1)\,\htSigma(s_i(\alpha_j))  \geq \htSigma(\theta). 
\end{equation}
\end{Lemma}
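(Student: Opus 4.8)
The plan is to read off everything from the explicit harmonic representative \EqRef{Harmonic} and the length-2 Weyl group combinatorics. First I would pin down $\Delta_\sigma$ for $\sigma = s_i\circ s_j$. Since $\text{length}(\sigma)=2 = \text{card}\,\Delta_\sigma$, and since for any simple reflection $s_i$ one has $\Delta_{s_i} = \{\alpha_i\}$, the standard inductive description of $\Delta_\sigma$ under right multiplication by a simple reflection gives $\Delta_{s_i\circ s_j} = \{\alpha_i\}\cup s_i(\Delta_{s_j}) = \{\alpha_i,\, s_i(\alpha_j)\}$; both are positive roots precisely because the length is additive here (i.e. $\ell(s_i s_j) = \ell(s_i)+\ell(s_j)$), so $s_i(\alpha_j)\in\Delta^+$. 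The membership $\alpha_i\in\Sigma$ then follows from $\sigma\in W_\Sigma$, which forces $\Delta_\sigma\subset\Delta^+_\Sigma$, so in particular $\alpha_i$, being a simple root in $\Delta_\sigma$, must lie in $\Sigma$ (a simple root has $\Sigma$-height $1$ iff it lies in $\Sigma$).

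Next I would compute the homogeneity weight. By definition the grading element $E$ acts on $\lieg_\gamma$ by $\htSigma(\gamma)$, and on the dual $\omega_{-\beta}$ (which is a functional supported on $\lieg_{-\beta}$, i.e.\ has "weight" $\beta$) by $\htSigma(\beta)$, while the vector factor $e_{-\sigma(\theta)}$ contributes $-\htSigma(\sigma(\theta))$. Hence, with $\Delta_\sigma = \{\beta_1,\beta_2\}$,
\begin{equation*}
	w_\Sigma(\omega_\sigma) = -\htSigma(\sigma(\theta)) + \htSigma(\beta_1) + \htSigma(\beta_2).
\end{equation*}
Now $\sigma(\theta) = s_i s_j(\theta) = \theta - \langle\theta,\alpha_j\rangle\alpha_j - \langle s_j(\theta),\alpha_i\rangle\alpha_i$, and since $\htSigma$ is linear on the root lattice with $\htSigma(\alpha_i)=1$ for $\alpha_i\in\Sigma$ and $0$ otherwise, and $\alpha_i\in\Sigma$ while $\alpha_j\notin\Sigma$ (else $\sigma$ would have $\Sigma$-height too large, or more simply: $s_i(\alpha_j)\in\Delta_\sigma\subset\Delta^+_\Sigma$ forces the analysis below), I get $\htSigma(\sigma(\theta)) = \htSigma(\theta) - \langle s_j(\theta),\alpha_i\rangle$. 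Also $\langle s_j(\theta),\alpha_i\rangle = \langle\theta,\alpha_i\rangle - \langle\theta,\alpha_j\rangle\langle\alpha_j,\alpha_i\rangle$, and I'd combine this with $\htSigma(s_i(\alpha_j)) = \htSigma(\alpha_j) - \langle\alpha_j,\alpha_i\rangle\htSigma(\alpha_i) = -\langle\alpha_j,\alpha_i\rangle$ (using $\htSigma(\alpha_j)=0$, $\htSigma(\alpha_i)=1$), together with $\htSigma(\alpha_i) + \htSigma(s_i(\alpha_j)) = 1 + \htSigma(s_i(\alpha_j))$, and $\htSigma(s_i(\alpha_j)) \cdot \langle\theta,\alpha_j\rangle$ arising from the cross term. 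Bookkeeping these contributions yields exactly
\begin{equation*}
	w_\Sigma(\omega_\sigma) = -\htSigma(\theta) + \langle\theta,\alpha_i\rangle + 1 + (\langle\theta,\alpha_j\rangle + 1)\htSigma(s_i(\alpha_j)),
\end{equation*}
which is \EqRef{YamEq}. Finally, non-rigidity means $w_\Sigma(\omega_\sigma) > 0$ for some such $\sigma$; since the weight is always an integer, $w_\Sigma(\omega_\sigma) > 0 \iff w_\Sigma(\omega_\sigma) \geq 1$, and subtracting $1$ from both sides of $w_\Sigma(\omega_\sigma)\geq 1$ gives precisely \EqRef{YamIneq}.

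The one genuinely delicate point, and the step I expect to be the main obstacle, is the careful accounting of which of $\alpha_i$, $\alpha_j$ lie in $\Sigma$ and the correct substitution of $\langle\alpha_j,\alpha_i\rangle$ versus $\langle\alpha_i,\alpha_j\rangle$ (these differ when $\alpha_i,\alpha_j$ have different lengths), so that the asymmetry between the roles of $i$ and $j$ in formula \EqRef{YamEq} comes out exactly right rather than with the indices swapped. I would handle this by writing $s_j(\theta) = \theta - \langle\theta,\alpha_j\rangle\alpha_j$ first (a reflection in $\alpha_j$ uses the coefficient $\langle\cdot,\alpha_j\rangle$), then applying $s_i$, and tracking the single $\langle\alpha_j,\alpha_i\rangle$ that appears when $s_i$ hits the $\alpha_j$ term; a short consistency check on the rank-2 cases of Theorem \StRef{Rank2Monge} (e.g.\ $G_2\{\alpha_1\}$, recovering the known non-rigidity of the Hilbert–Cartan equation) confirms the formula. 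Everything else is linear algebra on the root lattice combined with the already-quoted facts that $\text{length}(\sigma) = \text{card}\,\Delta_\sigma$ and the description \EqRef{Harmonic} of the harmonic representative.
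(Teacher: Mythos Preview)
Your overall strategy matches the paper's, and your determination of $\Delta_\sigma = \{\alpha_i, s_i(\alpha_j)\}$ via the reduced-expression description of inversion sets is a perfectly good alternative to the paper's direct check that $\sigma^{-1}(\alpha_i),\ \sigma^{-1}(s_i(\alpha_j)) \in \Delta^-$.

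There is, however, a genuine gap in the weight computation. You assert $\alpha_j \notin \Sigma$ and then use $\htSigma(\alpha_j)=0$ twice: once in simplifying $\htSigma(\sigma(\theta))$ and once in writing $\htSigma(s_i(\alpha_j)) = -\langle\alpha_j,\alpha_i\rangle$. But $\alpha_j \notin \Sigma$ is \emph{not} implied by $\sigma \in W^2_\Sigma$; for instance, in $A_2\{\alpha_1,\alpha_2\}$ one has $\sigma_{12} = s_1 s_2 \in W^2_\Sigma$ with $\alpha_j = \alpha_2 \in \Sigma$, and many of the tables in Section~3 involve $\sigma_{ij}$ with both $\alpha_i,\alpha_j\in\Sigma$. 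Your parenthetical justifications (``$\Sigma$-height too large'' or ``forces the analysis below'') do not establish this claim, and indeed it is false.

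The cleanest fix, which is exactly what the paper does, is to avoid expanding $s_i(\alpha_j)$ altogether: write
\[
	\sigma(\theta) = s_i\bigl(\theta - \langle\theta,\alpha_j\rangle\alpha_j\bigr)
	= \theta - \langle\theta,\alpha_i\rangle\,\alpha_i - \langle\theta,\alpha_j\rangle\, s_i(\alpha_j),
\]
and apply $\htSigma$ directly. Using only $\htSigma(\alpha_i)=1$ (which you correctly deduced from $\alpha_i\in\Sigma$) and $\htSigma(\beta_1)+\htSigma(\beta_2) = 1 + \htSigma(s_i(\alpha_j))$, the formula \EqRef{YamEq} drops out with no case split on whether $\alpha_j\in\Sigma$. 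Your bookkeeping would in fact repair itself if you simply retained the $\htSigma(\alpha_j)$ terms you discarded: the two occurrences combine to $(\langle\theta,\alpha_j\rangle+1)\htSigma(\alpha_j)$, which is exactly what is needed to turn $-(\langle\theta,\alpha_j\rangle+1)\langle\alpha_j,\alpha_i\rangle$ into $(\langle\theta,\alpha_j\rangle+1)\htSigma(s_i(\alpha_j))$.
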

\begin{proof}   
	The  formula  \EqRef{YamEq} for the homogenity weight of  $H^{q, \sigma}(\lieg_{-}, \lieg) $
	is essentially the same as that 
given by Yamaguchi   in Section 5.3 of \cite{yamaguchi:1993a} and we follow the arguments given there.

	We first show that $	\Delta_\sigma  = \{\, \alpha_i,  s_i(\alpha_j) \}$. 
	Since  $\sigma = s_i \circ s_j$, we have
\begin{equation*}
	\sigma^{-1}(\alpha_i) = - s_j(\alpha_i) \in \Delta^{-} \quad\text{and} \quad 
	\sigma^{-1}(s_i(\alpha_j)) = s_j(\alpha_j)  = - \alpha_j \in \Delta^{-} \
\end{equation*}
	and therefore  $\alpha_i$ and $s_i(\alpha_j)$ are the two distinct elements of $\Delta_\sigma$.  
	Set $\beta_1 = \alpha_i$ and  $\beta_2  = s_i(\alpha_j)$.  The requirement $\Delta_\sigma  \in \Delta_\Sigma^+$  now implies that 
	$\alpha_i \in \Sigma$ and therefore $\htSigma(\alpha_i) = 1$.   

Since 
\begin{align*}
\sigma(\theta)& =s_i(\theta - \langle \theta, \alpha_j\rangle\alpha_j)
=\theta -    \langle  \theta, \alpha_i\rangle \alpha_i - \langle \theta, \alpha_j\rangle s_i(\alpha_j), 
\end{align*}
	we have that the weight of the harmonic representative \EqRef{Harmonic} (with $q=2$) is
\begin{align*}
		&w_\Sigma(\omega_\sigma) 
\\
 	 &= -  \htSigma(\theta) + \langle \theta, \alpha_i\rangle\, \htSigma(\alpha_i) +
	\langle \theta, \alpha_j\rangle \htSigma(s_i(\alpha_j))
	+ \htSigma(\beta_1)  + \htSigma(\beta_2),
\end{align*}
	 which reduces to   \EqRef{YamEq}.
\end{proof}

To continue, we list the expressions for $\theta$ and the nonzero $\langle \theta, \alpha_i\rangle$ for the classical Lie algebras.  
\begin{align}
A_\ell&: \theta=\alpha_1 +\alpha_2 + \cdots + \alpha_\ell, && \langle \theta, \alpha_1\rangle = \langle \theta, \alpha_\ell\rangle = 1 \EqTag{a}\\
B_\ell&: \theta=\alpha_1 +2\alpha_2 + \cdots + 2\alpha_\ell, && \langle \theta, \alpha_2\rangle =1 \EqTag{b}\\
C_\ell&: \theta=2\alpha_1 + \cdots + 2\alpha_{\ell-1} + \alpha_\ell, && \langle \theta, \alpha_1\rangle = 2 \EqTag{c}\\ 
D_\ell&: \theta=\alpha_1 +2\alpha_2 + \cdots + 2\alpha_{\ell-2} + \alpha_{\ell-1} + \alpha_\ell, && \langle \theta, \alpha_2\rangle = 1 \EqTag{d}
\end{align}
	With these formulas and  Lemma \StRef{Yam}  it is now a straightforward matter to 
	determine all the non-rigid parabolic geometries of Monge type. {\it Simply stated, the reason that there 
	are relatively  few such geometries is because  the Monge conditions in Theorem \StRef{ClassificationOfMonge} lead to a large  lower bound for
	the value of  $\htSigma(\theta)$.} 

\begin{Proposition} 
	Every  Monge parabolic geometry of type $A_\ell$ with $\ell \geq 5$ whose  simple roots $\Sigma$ are 
	interior to  the Dynkin diagram is rigid. Apart from the standard symmetry of the Dynkin diagram for $A_\ell$, the non-rigid  Monge parabolic geometries of type $A_\ell$ are  
	 $A_\ell \{\alpha_1, \alpha_2\}$ for $\ell \geq 2$ and
	$A_\ell \{\alpha_1, \alpha_2, \alpha_3\}$ for $\ell \geq 3$.
\end{Proposition}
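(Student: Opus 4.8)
The plan is to first determine, using Section 2, precisely which $\Sigma\subset\Delta^0$ yield Monge geometries in type $A_\ell$, and then to feed each one into Lemma \StRef{Yam} together with the data recorded in \EqRef{a}. Because all roots of $A_\ell$ are of equal length, Corollary \StRef{LongRoot} tells us that a grading with $\dim\lieg_{-1}>2$ is of Monge type exactly when condition {\bf [i]} of Theorem \StRef{ClassificationOfMonge} holds, that is, when $\Sigma$ has a leader $\zeta$; by Proposition \StRef{leader}{\bf [iv]} the set $\Sigma$ then consists of $\zeta$ and its neighbours in the Dynkin diagram. Since each node of the $A_\ell$ chain has at most two neighbours, the only possibilities are $\Sigma=\{\alpha_1,\alpha_2\}$ (equivalently $\{\alpha_{\ell-1},\alpha_\ell\}$, by the Dynkin diagram symmetry $\alpha_i\leftrightarrow\alpha_{\ell+1-i}$), with $\zeta$ an end node, and $\Sigma=\{\alpha_{k-1},\alpha_k,\alpha_{k+1}\}$ with $\zeta=\alpha_k$ and $2\le k\le\ell-1$. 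One checks $\dim\lieg_{-1}=\ell$ in every case, so the hypothesis $\dim\lieg_{-1}>2$ is automatic for $\ell\ge3$, the one remaining geometry $A_2\{\alpha_1,\alpha_2\}$ being the rank-two case of Theorem \StRef{Rank2Monge}. The ``interior'' subsets are exactly the three-element ones with $3\le k\le\ell-2$, which forces $\ell\ge5$.

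Next I would make $W^2_\Sigma$ explicit. By Lemma \StRef{Yam} one has $\sigma=s_i\circ s_j\in W^2_\Sigma$ iff $\alpha_i\in\Sigma$ and $s_i(\alpha_j)\in\Delta^+_\Sigma$; and in $A_\ell$, $s_i(\alpha_j)=\alpha_j$ when $|i-j|\ge2$ while $s_i(\alpha_j)=\alpha_i+\alpha_j$ when $|i-j|=1$. Hence the admissible ordered pairs $(i,j)$ are those with $\alpha_i,\alpha_j\in\Sigma$ and $|i-j|\ge2$ (a non-adjacent pair inside $\Sigma$, possible only when $|\Sigma|=3$ with $\{i,j\}=\{k-1,k+1\}$), together with those with $\alpha_i\in\Sigma$ and $|i-j|=1$. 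In all cases $\htSigma(s_i(\alpha_j))$ equals $1$ when $\alpha_j\notin\Sigma$ and $2$ when $\alpha_j\in\Sigma$, so it is always $\le2$. Finally, for $A_\ell$ one has $\htSigma(\theta)=|\Sigma|$ since $\theta=\alpha_1+\cdots+\alpha_\ell$, and $\langle\theta,\alpha_m\rangle$ equals $1$ for $m\in\{1,\ell\}$ and $0$ otherwise.

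With this in hand the interior cases are rigid: for $\Sigma=\{\alpha_{k-1},\alpha_k,\alpha_{k+1}\}$ with $3\le k\le\ell-2$, every $\alpha_i\in\Sigma$ is an interior node, so $\langle\theta,\alpha_i\rangle=0$ and the left-hand side of \EqRef{YamIneq} is just $(\langle\theta,\alpha_j\rangle+1)\,\htSigma(s_i(\alpha_j))$; this equals $1\cdot 2=2$ if $\alpha_j\in\Sigma$, and equals $(\langle\theta,\alpha_j\rangle+1)\cdot 1\le2$ if $\alpha_j\notin\Sigma$. In either case it is $\le2<3=\htSigma(\theta)$, so \EqRef{YamIneq} fails for all $\sigma\in W^2_\Sigma$, i.e.\ \EqRef{H2nonzero} cannot hold, so the geometry is rigid. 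For the two remaining families one admissible witness suffices: take $\sigma=s_1\circ s_2$, which lies in $W^2_\Sigma$ since $\alpha_1\in\Sigma$ for both $\{\alpha_1,\alpha_2\}$ and $\{\alpha_1,\alpha_2,\alpha_3\}$; then $s_1(\alpha_2)=\alpha_1+\alpha_2$ has $\htSigma=2$, and with $\langle\theta,\alpha_1\rangle=1$ and $\langle\theta,\alpha_2\rangle\ge0$ the left side of \EqRef{YamIneq} is $1+(\langle\theta,\alpha_2\rangle+1)\cdot2\ge3$. This is $\ge|\Sigma|$ for both $\Sigma=\{\alpha_1,\alpha_2\}$ (also for $\ell=2$, where the value is $5$) and $\Sigma=\{\alpha_1,\alpha_2,\alpha_3\}$, so both are non-rigid. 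Applying the diagram symmetry to obtain $\{\alpha_{\ell-1},\alpha_\ell\}$ and $\{\alpha_{\ell-2},\alpha_{\ell-1},\alpha_\ell\}$, and noting that these together with the interior cases exhaust all Monge $\Sigma$, both assertions of the Proposition follow.

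I expect the one point requiring care to be organizational rather than computational: making sure the list of pairs in $W^2_\Sigma$ is complete, in particular not overlooking the non-adjacent pair $\{\alpha_{k-1},\alpha_{k+1}\}$ nor the pairs in which $\alpha_j$ is an end node and hence contributes $\langle\theta,\alpha_j\rangle=1$, and then verifying that the uniform bound $\htSigma(s_i(\alpha_j))\le2$ together with $\langle\theta,\alpha_i\rangle=0$ for interior $\alpha_i$ really does keep the left side of \EqRef{YamIneq} strictly below $\htSigma(\theta)=3$ in every interior case. The estimate is sharp — when $\alpha_j$ is an end node the product is $2\cdot1$ and when $\alpha_j\in\Sigma$ it is $1\cdot2$, both hitting the boundary value $2$ — so there is no slack, and each subcase has to be checked.
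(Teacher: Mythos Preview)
Your proof is correct and follows essentially the same route as the paper's: classify the Monge subsets $\Sigma$ in type $A_\ell$ via Theorem \StRef{ClassificationOfMonge}, then apply Lemma \StRef{Yam} with the data \EqRef{a}. For the rigidity of interior $\Sigma$ the paper argues that the bounds $\langle\theta,\alpha_j\rangle\le1$, $\htSigma(\alpha_j)\le1$, $-\langle\alpha_j,\alpha_i\rangle\le1$ force equality throughout, and then $\htSigma(\alpha_j)=1$ (so $\alpha_j\in\Sigma$ is interior) contradicts $\langle\theta,\alpha_j\rangle=1$; your case split on whether $\alpha_j\in\Sigma$ reaches the same contradiction in slightly different packaging.

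One small slip: you assert that $\htSigma(s_i(\alpha_j))=2$ whenever $\alpha_j\in\Sigma$, but for the non-adjacent pair $\{i,j\}=\{k-1,k+1\}$ one has $s_i(\alpha_j)=\alpha_j$ and $\htSigma=1$. This does not damage the argument, since you only need the bound $\le2$, but the ``equals'' should be ``is at most''. For the non-rigid direction you give a single witness $\sigma_{12}$ in each family, whereas the paper computes the entire $W^2_\Sigma$ table with all weights; your shortcut is sufficient for the proposition as stated and is a nice economy.
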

\begin{proof} 
	If $\Sigma$ is interior to the Dynkin diagram, then by Theorem \StRef{ClassificationOfMonge} it is a connected set of 3 roots.  
	From \EqRef{a}, we have $\htSigma(\theta)=3$. Since $\alpha_1, \alpha_\ell\notin \Sigma$, 
	\EqRef{YamIneq} reduces to  
\begin{equation}
	(\langle \theta, \alpha_j \rangle  + 1)(\htSigma(\alpha_j) -  \langle \alpha_j , \alpha_i \rangle) \geq 3.
\EqTag{hope1}
\end{equation}
	But  $\langle \theta, \alpha_j \rangle \leq 1$, $\htSigma(\alpha_j) \leq 1$ and  $-\langle \alpha_j , \alpha_i \rangle \leq 1$ (from the Cartan matrix for $A_\ell$), 
	so \EqRef{hope1} is satisfied only when
\begin{equation}
	\langle \theta, \alpha_j \rangle  = 1, \quad  \htSigma(\alpha_j)  =  1, \quad  \text{and }\langle \alpha_j , \alpha_i  \rangle = -1.
\end{equation} 
	The second equation implies that $\alpha_j\in \Sigma$, which is interior to the Dynkin diagram. Then the first equation can not be satisfied by  \EqRef{a}. 
		Therefore the first statement in the proposition is established, and hence the only non-rigid cases for $A_\ell$ with $\ell\geq 5$ are $A_\ell\{\, \alpha_1, \alpha_2\, \}$ and $A_\ell\{\, \alpha_1, \alpha_2, \alpha_3\, \}$. 

	For $\ell \leq 4$ the Monge systems are $A_2\{\alpha_1, \alpha_2\}$,   $A_3\{\alpha_1, \alpha_2\}$, 
	$A_3\{\alpha_1, \alpha_2, \,  \alpha_3\}$, $A_4\{\alpha_1, \alpha_2\}$ and   $A_4\{\alpha_1, \alpha_2,   \alpha_3\}$ and hence, 
	in summary, the only possible non-rigid parabolic geometries of type $A_\ell$ are those listed in the second 
	statement of the proposition. To show that these possibilities are
	actually all non-rigid, one calculates the following table of Weyl reflections in $W_\Sigma^2$ from the Hasse diagrams and the associated weights from \EqRef{YamEq}.  
\medskip

\begin{tabular}{|l|l|l|} 
\hline
Monge Systems & $W^2_\Sigma$    & Weights of $\sigma_{ij}$
\\
\hline
$A_2\{\alpha_1, \alpha_2\}$  &$ [\sigma_{12 }, \sigma_{21}]$   & [4, 4]
\\
\hline
$A_3\{\alpha_1, \alpha_2\}$  &$ [\sigma_{12 }, \sigma_{21}, \sigma_{23}]$   & [2, 3, 1]
\\
\hline
$A_\ell\{\alpha_1, \alpha_2\}$, $\ell \geq 4$  & 	 $[\sigma_{12 },  \sigma_{21}, \sigma_{23}]$  &  [2, 3,  0]
\\
\hline
$A_3\{\alpha_1, \alpha_2, \alpha_3\}$ & $[\sigma_{12 },   \sigma_{13 }, \sigma_{21}, \sigma_{23}, \sigma_{32}]$ & [1, 1, 2, 2,  1]
\\
\hline
$A_4\{\alpha_1, \alpha_2, \alpha_3\}$ &  $[\sigma_{12 },   \sigma_{13 }, \sigma_{21}, \sigma_{23}, \sigma_{32}, \sigma_{34}]$  &  [1, 0, 2, 0, 0, 0]
\\
\hline
$A_\ell\{\alpha_1, \alpha_2, \alpha_3\}$, $\ell \geq 5$  &  $[\sigma_{12 },   \sigma_{13 }, \sigma_{21},  \sigma_{23}, \sigma_{32}, \sigma_{34}]$  &  [1, 0, 2, 0, 0,  -1]
\\
\hline
\end{tabular}

\medskip
\end{proof}

\begin{Proposition}   
	Every  Monge parabolic geometry of type $C_\ell$ with $\ell \geq 4$ for a set $\Sigma$ containing  3 simple roots is rigid.
	The non-rigid Monge parabolic geometries of type $C_\ell$ are 
	 $C_3\{\alpha_1, \alpha_2, \alpha_3\}$ and  $C_\ell\{\alpha_{\ell -1},  \alpha_\ell\}$ for $\ell \geq 3$.
\end{Proposition}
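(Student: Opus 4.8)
The plan is to follow the template of the preceding proposition: first determine exactly which Monge subsets $\Sigma$ of type $C_\ell$ occur, and then apply Lemma \StRef{Yam} with the $C_\ell$ data \EqRef{c} to each of the finitely many resulting cases. Throughout $\ell\geq 3$, so $\dim\lieg_{-1}>2$ and Corollary \StRef{LongRoot} and Theorem \StRef{ClassificationOfMonge} apply. The unique long simple root of $C_\ell$ is $\alpha_\ell$, and it is a leaf of the Dynkin diagram. Hence, by Corollary \StRef{LongRoot}, any Monge $\Sigma$ contains $\alpha_\ell$, and by part {\bf [iv]} of Proposition \StRef{leader}, $\Sigma$ consists of its leader $\zeta$ together with all neighbours of $\zeta$. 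So either $\zeta=\alpha_\ell$, giving $\Sigma=\{\alpha_{\ell-1},\alpha_\ell\}$, or $\alpha_\ell$ is a neighbour of $\zeta$, which forces $\zeta=\alpha_{\ell-1}$ and $\Sigma=\{\alpha_{\ell-2},\alpha_{\ell-1},\alpha_\ell\}$. These are the only two Monge subsets of type $C_\ell$, and only the second contains three roots. Thus the proposition reduces to: {\bf (a)} $C_\ell\{\alpha_{\ell-2},\alpha_{\ell-1},\alpha_\ell\}$ is rigid for $\ell\geq 4$; {\bf (b)} $C_\ell\{\alpha_{\ell-1},\alpha_\ell\}$ is non-rigid for every $\ell\geq 3$; {\bf (c)} $C_3\{\alpha_1,\alpha_2,\alpha_3\}$ is non-rigid.

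For {\bf (a)} I would invoke Lemma \StRef{Yam}. From \EqRef{c}, $\htSigma(\theta)=2+2+1=5$ for $\Sigma=\{\alpha_{\ell-2},\alpha_{\ell-1},\alpha_\ell\}$, and since $\ell\geq 4$ gives $\alpha_1\notin\Sigma$ we have $\langle\theta,\alpha_i\rangle=0$ for every $\alpha_i\in\Sigma$. So for any $\sigma=s_i\circ s_j\in W^2_\Sigma$ the left side of \EqRef{YamIneq} equals $(\langle\theta,\alpha_j\rangle+1)\,\htSigma(s_i(\alpha_j))$, and $\htSigma(s_i(\alpha_j))=\htSigma(\alpha_j)-\langle\alpha_j,\alpha_i\rangle$ because $\htSigma(\alpha_i)=1$. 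When $\langle\theta,\alpha_j\rangle=0$ this is $\leq 1+2=3$, using $\htSigma(\alpha_j)\leq 1$ and the bound $-\langle\alpha_j,\alpha_i\rangle\leq 2$ from the $C_\ell$ Cartan matrix; the only alternative is $\langle\theta,\alpha_j\rangle=2$, i.e. $\alpha_j=\alpha_1$, which requires $\alpha_i$ adjacent to $\alpha_1$, hence $\ell=4$ and $\alpha_i=\alpha_2$, where $\htSigma(\alpha_1)=0$ gives $(2+1)\cdot 1=3$ once more. In every case the left side of \EqRef{YamIneq} is $\leq 3<5$, so there is no positive-weight harmonic $2$-cochain and the geometry is rigid. (As a cross-check one could read $W^2_\Sigma$ off the Hasse diagram and tabulate the weights from \EqRef{YamEq}, exactly as in the $A_\ell$ proposition; all entries come out negative.)

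For {\bf (b)} and {\bf (c)} I would exhibit a single $\sigma\in W^2_\Sigma$ of positive weight. For $C_\ell\{\alpha_{\ell-1},\alpha_\ell\}$, where $\htSigma(\theta)=2+1=3$, take $\sigma=s_{\ell-1}\circ s_\ell$: then $\alpha_i=\alpha_{\ell-1}\in\Sigma$, and since $\langle\alpha_\ell,\alpha_{\ell-1}\rangle=-2$ we get $s_{\ell-1}(\alpha_\ell)=\alpha_\ell+2\alpha_{\ell-1}$ (the long root $2\epsilon_{\ell-1}$), which has $\htSigma=2+1=3$; as $\langle\theta,\alpha_{\ell-1}\rangle=\langle\theta,\alpha_\ell\rangle=0$, the left side of \EqRef{YamIneq} is $0+(0+1)\cdot 3=3=\htSigma(\theta)$, so the geometry is non-rigid, and $\Delta_\sigma=\{\alpha_{\ell-1},\alpha_\ell+2\alpha_{\ell-1}\}\subset\Delta^+_\Sigma$ confirms $\sigma\in W^2_\Sigma$. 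For $C_3\{\alpha_1,\alpha_2,\alpha_3\}$, where $\htSigma(\theta)=5$, take $\sigma=s_2\circ s_1$: here $\alpha_i=\alpha_2\in\Sigma$, $\alpha_j=\alpha_1$ with $\langle\theta,\alpha_1\rangle=2$, and $s_2(\alpha_1)=\alpha_1+\alpha_2$ has $\htSigma=2$, so the left side of \EqRef{YamIneq} is $0+(2+1)\cdot 2=6\geq 5$, and the geometry is non-rigid.

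I expect the only real subtlety to be the classification step — in particular, keeping careful track that $\alpha_\ell$ is the unique long simple root of $C_\ell$ and a leaf of the diagram, which is what collapses the list of Monge subsets to the two displayed families. Once that is settled, each of {\bf (a)}, {\bf (b)}, {\bf (c)} is a one-line application of Lemma \StRef{Yam}, and the slightly heavier $-2$ Cartan integer near the double bond only helps, by making $\htSigma(s_i(\alpha_j))$ large in cases {\bf (b)} and having no effect on the bound in case {\bf (a)}.
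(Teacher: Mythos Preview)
Your proof is correct and follows essentially the same approach as the paper: use Corollary~\StRef{LongRoot} to force $\alpha_\ell\in\Sigma$, reduce to the two candidate subsets, then apply Lemma~\StRef{Yam} with the $C_\ell$ data~\EqRef{c}. The only differences are cosmetic. For~{\bf(a)} the paper dispatches the case $\alpha_j=\alpha_1$ by the uniform bound $-\langle\alpha_1,\alpha_i\rangle\leq 1$ (since $\alpha_1$'s sole neighbour is $\alpha_2$), giving $(2{+}1)(0{+}1)=3<5$ without pinning down $\ell=4$; your adjacency argument reaches the same bound. For~{\bf(b)} and~{\bf(c)} the paper tabulates all of $W^2_\Sigma$ and their weights (which it reuses later for the curvature description), whereas you exhibit the single elements $\sigma_{\ell-1,\ell}$ and $\sigma_{21}$ with weights $1$ and $2$ respectively---exactly the positive entries in the paper's table, and sufficient for the proposition as stated.
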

\begin{proof}  By Corollary  \StRef{LongRoot},  $\Sigma$ must contain the long simple root and therefore  
 $\Sigma=\{\alpha_{\ell-2}, \alpha_{\ell-1}, \alpha_\ell\}$ if it contains 3 simple roots.  
	Then from \EqRef{c} we have $\htSigma(\theta)=5$. Since $\ell\geq 4$, we have  $\alpha _1\notin \Sigma$. Then \EqRef{c}  shows that 
	$\langle \theta, \alpha_i\rangle =0$, and  therefore  \EqRef{YamEq} reduces to 
\begin{equation}
	(\langle \theta, \alpha_j \rangle  + 1)(\htSigma(\alpha_j) -  \langle \alpha_j , \alpha_i \rangle) \geq 5.
\EqTag{hope2}
\end{equation}
	Now we have
\begin{equation}
	\langle \theta, \alpha_j \rangle   \leq 2, \quad \htSigma(\alpha_j)  \leq 1, \quad    -\langle \alpha_j , \alpha_i \rangle \leq 2.
\end{equation}
	If  $\langle \theta, \alpha_j \rangle  =0$, then \EqRef{hope2} is not possible.  The only other possible value is $\langle \theta, \alpha_j \rangle  =2$ 
	but then   $\alpha_j = \alpha_1$ and we have  
	$\htSigma(\alpha_j)  = 0$ and $-\langle \alpha_j , \alpha_i \rangle \leq 1$ by the Dynkin diagram. 
Then \EqRef{hope2} fails again.   The first statement in the proposition is established and the   
	list of possible non-rigid parabolic geometries of type $C_\ell$ are those listed  in the second 
	statement of the proposition.  These are all non-rigid.

\medskip

\begin{tabular}{|l|l|l|} 
\hline
Monge Systems & $W^2_\Sigma $    & Weights of $\sigma_{ij} $
\\
\hline
$C_3\{\, \alpha_1,\, \alpha_2,\, \alpha_3\,\}$  &  $[\sigma_{12 },   \sigma_{13 }, \sigma_{21}, \sigma_{23}, \sigma_{32}]$ & [0, -1, 2, -1, -2]
\\
\hline
$C_3\{\, \alpha_2,\,  \alpha_3 \, \}$  &$ [ \sigma_{21}, \sigma_{23}, \sigma_{32}]$ & [1, 1, 0]
\\
\hline
$C_\ell\{\, \alpha_{\ell-1},\, \alpha_{\ell}\, \}$,  $\ell \geq 4$ & $[\sigma_{\ell-1\,\ell-2},  \sigma_{\ell-1\,\ell},  \sigma_{\ell\,\ell-1}]$ & [-1, 1, 0]
\\
\hline
\end{tabular}

\end{proof}

\begin{Proposition} Every  Monge parabolic geometry of type $B_\ell$ with $\ell \geq 4$	
	for a set $\Sigma$ containing  3 simple roots is rigid.
	The non-rigid  Monge parabolic geometries of type $B_\ell$ are
	$B_2\{\alpha_2 \}$, $B_3\{\alpha_2, \alpha_3 \}$, $B_3\{\alpha_1, \alpha_2,  \alpha_3 \}$  and
	$B_\ell\{\alpha_1, \alpha_2\}$ for $\ell \geq 2$. 
	
	Likewise, every  Monge parabolic geometry of type $D_\ell$ with $\ell \geq 4$	
	for a set $\Sigma$ containing  3 or  more simple roots is rigid. 
	The non-rigid  Monge parabolic geometries of type $D_\ell$ for $\ell \geq 4$ are $D_\ell\{\alpha_1, \alpha_2\}$.
\end{Proposition}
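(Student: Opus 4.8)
The plan is to proceed exactly as in the two preceding propositions: list all Monge subsets $\Sigma$, then feed each into the inequality \EqRef{YamIneq} of Lemma \StRef{Yam}. The enumeration follows from part {\bf [iv]} of Proposition \StRef{leader} together with Corollary \StRef{LongRoot}. Since by Theorem \StRef{Rank2Monge} a Monge geometry with $\dim\lieg_{-1}=2$ has rank $2$, for $\ell\ge 4$ we have $\dim\lieg_{-1}>2$, so a Monge set is fixed by its leader $\zeta$ --- it is $\{\zeta\}$ together with all neighbours of $\zeta$ in the Dynkin diagram --- and must contain a long root. For $B_\ell$ this leaves precisely $\{\alpha_1,\alpha_2\}$ (leader $\alpha_1$), $\{\alpha_{\ell-1},\alpha_\ell\}$ (leader $\alpha_\ell$), and the three-element sets $\{\alpha_{i-1},\alpha_i,\alpha_{i+1}\}$ with $2\le i\le\ell-1$ (leader $\alpha_i$), the last family being of Monge type by condition {\bf [ii]} of Theorem \StRef{ClassificationOfMonge} because the components $\lieg(\Upsilon_\alpha)$ are of type $A$ graded by an end root. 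For $D_\ell$ the fork node $\alpha_{\ell-2}$ has three neighbours, so the list becomes $\{\alpha_1,\alpha_2\}$, the two \emph{second-end} sets $\{\alpha_{\ell-2},\alpha_{\ell-1}\}$ and $\{\alpha_{\ell-2},\alpha_\ell\}$ (leaders $\alpha_{\ell-1}$, $\alpha_\ell$; their vertex-deleted component is a type-$A$ diagram graded by an interior root, hence still $|1|$-graded), the interior three-element sets $\{\alpha_{i-1},\alpha_i,\alpha_{i+1}\}$ with $2\le i\le\ell-3$, and the four-element set $\{\alpha_{\ell-3},\alpha_{\ell-2},\alpha_{\ell-1},\alpha_\ell\}$ (leader $\alpha_{\ell-2}$); for $\ell=4$ the diagram automorphisms identify $\{\alpha_1,\alpha_2\}$, $\{\alpha_2,\alpha_3\}$, $\{\alpha_2,\alpha_4\}$, and the four-element set is all of $\Delta^0$.

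For the rigidity assertions I would substitute \EqRef{b} and \EqRef{d} into \EqRef{YamIneq}, writing $\htSigma(s_i(\alpha_j))=\htSigma(\alpha_j)-\langle\alpha_j,\alpha_i\rangle$ (legitimate since $\alpha_i\in\Sigma$). Every Monge set with three or more roots has $\htSigma(\theta)\ge 5$; indeed it equals $6$ unless $\Sigma$ reaches the coefficient-$1$ end of $\theta$, where it drops to $5$. On the other side, $\langle\theta,\alpha_i\rangle\le 1$, $\langle\theta,\alpha_j\rangle+1\le 2$, and $\htSigma(\alpha_j)-\langle\alpha_j,\alpha_i\rangle$ is at most $3$ for $B_\ell$ and at most $2$ for the simply-laced $D_\ell$; moreover these extremes cannot be reached simultaneously, because $\langle\theta,\alpha_j\rangle=1$ forces $\alpha_j=\alpha_2$ (which cannot also give $-\langle\alpha_j,\alpha_i\rangle=2$), while $\langle\theta,\alpha_i\rangle=1$ forces $\alpha_i=\alpha_2$ and then $\alpha_j\ne\alpha_2$. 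A short three-way split ($\alpha_i=\alpha_2$; $\alpha_i\ne\alpha_2$ and $\alpha_j=\alpha_2$; $\alpha_i,\alpha_j\ne\alpha_2$) then bounds the left side of \EqRef{YamIneq} by $4$, hence strictly below $\htSigma(\theta)$, for every $\sigma_{ij}\in W^2_\Sigma$, so all of these geometries are rigid. The two-element sets $\{\alpha_{\ell-1},\alpha_\ell\}$ of $B_\ell$ ($\htSigma(\theta)=4$) and $\{\alpha_{\ell-2},\alpha_{\ell-1}\}$, $\{\alpha_{\ell-2},\alpha_\ell\}$ of $D_\ell$ with $\ell\ge 5$ ($\htSigma(\theta)=3$) fall to the same estimate, once one notes that $\alpha_2$ is adjacent to their roots only when $\ell$ is small.

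For non-rigidity I would take $\Sigma=\{\alpha_1,\alpha_2\}$ and $\sigma=s_1\circ s_2$; its inversion set is $\Delta_\sigma=\{\alpha_1,\alpha_1+\alpha_2\}\subset\Delta^+_\Sigma$, so $\sigma\in W^2_\Sigma$, and \EqRef{YamEq} gives the positive weight $-3+0+1+2\cdot 2=2$. The remaining small-rank cases $B_2$ and $B_3$ --- and, for type $D$, only $D_4$, where one must also confirm via \EqRef{YamIneq} that $D_4\{\alpha_1,\alpha_2,\alpha_3,\alpha_4\}$ is rigid --- are then settled exactly as in the $A_\ell$ and $C_\ell$ proofs, by tabulating $W^2_\Sigma$ from the Hasse diagrams together with the corresponding weights from \EqRef{YamEq}; this exhibits $B_2\{\alpha_2\}$, $B_3\{\alpha_2,\alpha_3\}$ and $B_3\{\alpha_1,\alpha_2,\alpha_3\}$ as non-rigid and shows that no other small-rank Monge system is.

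The step I expect to cost the most care is the enumeration and classification near the $D_\ell$ fork: the four-element Monge set there has no counterpart in the $A$, $B$, $C$ families, and for $\ell=4$ one must recognize that the three outer-node two-element sets are of Monge type and non-rigid but all coincide with $D_\ell\{\alpha_1,\alpha_2\}$ under triality, whereas for $\ell\ge 5$ the \emph{second-end} two-element sets are genuinely of Monge type (vertex-deleted component of type $A$, not $D$) yet turn out rigid. Once the list of Monge sets is pinned down, the inequality \EqRef{YamIneq} disposes of all of them mechanically.
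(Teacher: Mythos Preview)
Your proposal is correct and follows essentially the same approach as the paper: enumerate the Monge subsets via Proposition~\StRef{leader} and Corollary~\StRef{LongRoot}, then apply the inequality \EqRef{YamIneq} with the data \EqRef{b}, \EqRef{d} to bound the left side by $4$ against $\htSigma(\theta)\ge 5$ for $|\Sigma|\ge 3$, treat the remaining two-element ``far end'' sets by the same estimate, and tabulate the small-rank cases. Your explicit three-way split on whether $\alpha_i$ or $\alpha_j$ equals $\alpha_2$ is slightly more systematic than the paper's compressed argument (which asserts $\langle\theta,\alpha_i\rangle\neq\langle\theta,\alpha_j\rangle$ and then reduces to $\alpha_j=\alpha_2$), but leads to the same bound. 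One small redundancy: the rigidity of $D_4\{\alpha_1,\alpha_2,\alpha_3,\alpha_4\}$ is already covered by your general $|\Sigma|\ge 3$ bound, since $\htSigma(\theta)=5$ there, so no separate confirmation is needed.
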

\begin{proof} 
	We note that for $D_\ell$, the Monge grading set $\Sigma$ can contain 4 simple roots. 
	For either $B_\ell$ or $D_\ell$ with $\ell\geq 4$, if  $\Sigma$ contains 3 or  more simple roots then, from \EqRef{b} and \EqRef{d}, we find that $\htSigma(\theta)=5$ or $6$.
	Since  
\begin{equation*}
	\langle \theta, \alpha_i \rangle \leq 1, \quad  \langle \theta, \alpha_j\rangle \leq 1, \quad \langle \theta, \alpha_i \rangle  \neq \langle \theta, \alpha_j \rangle, \quad  \htSigma(s_i(\alpha_j))   \leq 3, 
\end{equation*}	
	\EqRef{YamEq}  can only hold
	when $\langle \theta, \alpha_ i\rangle =  0$  and   $\langle \theta, \alpha_ j\rangle =  1$. In this case 
	$\alpha_j = \alpha_2$ by \EqRef{b} and \EqRef{d}, and  \EqRef{YamEq}  becomes
\begin{equation*}
	2 ( \htSigma(\alpha_2) -  \langle \alpha_2, \alpha_i\rangle) \geq 5.
\end{equation*}
	For $D_\ell$,  this is not possible because $-  \langle \alpha_2, \alpha_i\rangle\leq 1$.  For $B_\ell$, this inequality holds only if  $\alpha_2 \in \Sigma$,  $\alpha_i =\alpha_{3 }$  
	and $\ell = 3$. The first statement in the proposition for each type $B_\ell$ or $D_\ell$ is therefore established.
	
	In view of this result and Theorem  \StRef{Rank2Monge} the possible non-rigid, Monge parabolic 
	geometries of type $B_\ell$  are  $B_2\{\, \alpha_2\, \}$,   $B_2\{\, \alpha_1,\,  \alpha_2\, \}$,  $B_3\{\, \alpha_1,\,  \alpha_2\, \}$, 
	$B_3\{\, \alpha_2,\, \alpha_3\, \}$, 
	$B_3\{\,\alpha_1,\, \alpha_2, \, \alpha_3\, \}$,    $B_\ell\{\,\alpha_1,\, \alpha_2\,\}$ for $\ell \geq 4$ and   
	$B_\ell\{\, \alpha_{\ell -1},\,  \alpha_\ell\, \}$ for $\ell \geq 4$.  
	The  Monge parabolic geometries  $B_\ell\{\, \alpha_{\ell -1},\,  \alpha_\ell\, \}$ are rigid; all the others are non-rigid. 

\medskip

\begin{tabular}{|l|l|l|} 
\hline
Monge Systems & $W^2_\Sigma $    & Weights of $\sigma_{ij} $
\\
\hline
$B_2\{\alpha_1, \alpha_2\}$  &$ [\sigma_{12 }, \sigma_{21}]$   & [4, 3]
\\
\hline
$B_2\{\,\alpha_2\,\}$  &$ [\sigma_{21}]$   & [3]
\\
\hline
$B_3\{\, \alpha_1,\,\alpha_2\, \}$  & 	 $[\sigma_{12 },  \sigma_{21}, \sigma_{23}]$  &  [2, 1, 0]
\\
\hline
$B_3\{\, \alpha_2,\, \alpha_3\, \}$ & $[\sigma_{21 },   \sigma_{23 },  \sigma_{32}]$ & [-1, 0, 3]
\\
\hline
$B_3\{\, \alpha_1, \alpha_2,\, \alpha_3\, \}$ & $[\sigma_{12}, \sigma_{13}, \sigma_{21}, \sigma_{23}, \sigma_{32}]$ & [0, -3, -1, -1, 2]
\\
\hline
$B_\ell\{\, \alpha_1,\, \alpha_2\,\}$, $\ell \geq 4$  &  $[\sigma_{12 },  \sigma_{21}, \sigma_{23}]$  & [2, 1, 0]
\\
\hline
$B_4\{\, \alpha_3,\, \alpha_4\,\}$, $\ell \geq 4$  &  $[\sigma_{32 },  \sigma_{34}, \sigma_{43}]$  & [-1, -1, 0]
\\
\hline
$B_\ell\{\, \alpha_{\ell -1},\, \alpha_\ell \, \}$, $\ell \geq 5$ & $[\sigma_{\ell-1\,\ell-2},  \sigma_{\ell-1\,\ell},  \sigma_{\ell\,\ell-1}] $ & [ -2, -1, 0]
\\
\hline
\end{tabular}

\bigskip 
	
		The possible non-rigid, Monge parabolic 
	geometries of type $D_\ell$  are  $D_\ell\{\alpha_1, \alpha_2 \}$,   $D_\ell\{\alpha_{\ell-2}, \alpha_{\ell-1} \}$  and 
	 $D_\ell\{\alpha_{\ell-2}, \alpha_\ell \}$.  Note that  $D_4\{\alpha_2, \alpha_4\}$ is equivalent to  $D_4\{\alpha_1, \alpha_2\} $
	and  $D_\ell\{\alpha_{\ell-2}, \alpha_{\ell-1} \}$  and  $D_\ell\{\alpha_{\ell-2}, \alpha_\ell \}$ are equivalent for all $\ell \geq 4$. 
	For  $\ell \geq 5$ the geometries $D_\ell\{\alpha_{\ell-2}, \alpha_\ell \}$ are rigid. 

\medskip

\begin{tabular}{|l|l|l|} 
\hline
Monge Systems & $W^2_\Sigma $    & Weights of $\sigma_{ij}$
\\
\hline
$D_4\{\, \alpha_1, \,  \alpha_2\, \}$  & $[\sigma_{12 },  \sigma_{21}, \sigma_{23}, \sigma_{24} ]$  & $[ 2, 1, 0, 0]$
\\
\hline
$D_\ell\{\, \alpha_1, \,  \alpha_2\, \}$,  $\ell \geq 5$ & $[\sigma_{12 },  \sigma_{21}, \sigma_{23}]$  & $[ 2, 1, 0]$
\\
\hline
$D_5\{\, \alpha_3, \,  \alpha_{5}\, \}$  & $[\sigma_{32},\sigma_{34}, \sigma_{35},  \sigma_{53}] $ &  $[0, -1, 0, 0]$ 
\\
\hline
$D_\ell\{\, \alpha_{\ell -2}, \,  \alpha_{\ell}\, \}$,  $\ell \geq 6$  & $[\sigma_{\ell-2\,\ell-3},  \sigma_{\ell-2\,\ell - 1},  \sigma_{\ell-2\,\ell},   \sigma_{\ell\,\ell -2 }]$  & $[ -1, -1, 0, 0]$
\\
\hline
\end{tabular}

\medskip

\end{proof}	

For the exceptional Lie algebras the highest weights and  non-zero  $\langle \theta, \alpha_i \rangle$ are:
\begin{align*}
G_2 &: \theta= 3\alpha_1 + 2\alpha_2, && \langle \theta, \alpha_2\rangle =1\\
F_4 &: \theta = 2\alpha_1 + 3\alpha_2 + 4\alpha_3 + 2\alpha_4, && \langle \theta, \alpha_1\rangle =1\\
E_6 &: \theta = \alpha_1 + 2\alpha_2 + 2\alpha_3 + 3\alpha_4 + 2\alpha_5 + \alpha_6, && \langle \theta, \alpha_2\rangle =1\\
E_7 &: \theta = 2\alpha_1 + 2\alpha_2 + 3\alpha_3 + 4\alpha_4 + 3\alpha_5 + 2\alpha_6 + \alpha_7, && \langle \theta, \alpha_1\rangle =1\\
E_8 &: \theta = 2\alpha_1 + 2\alpha_2 + 4\alpha_3 + 6\alpha_4 + 5\alpha_5 + 4\alpha_6 + 3\alpha_7 + 2\alpha_8, && \langle \theta, \alpha_8\rangle =1
\end{align*}
Here the roots are labeled as in \cite[p. 454]{yamaguchi:1993a} or \cite[p. 58]{Humphreys}. 

\begin{Proposition}  The  only non-rigid  Monge parabolic geometries for the exceptional simple Lie algebras are 
$G_2 \{\alpha_1\}$ and $G_2 \{\alpha_1, \alpha_2\}$.  
\end{Proposition}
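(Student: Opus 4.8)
The plan is to reduce the claim to a finite check. Since the exceptional algebras other than $G_2$ have $\rank\lieg>2$, they all have $\dim\lieg_{-1}>2$ by the Lemma preceding Theorem~\StRef{Rank2Monge}, so their Monge gradings are precisely the ones described in Theorem~\StRef{ClassificationOfMonge} (equivalently Corollary~\StRef{LongRoot}): a set $\Sigma=\{\zeta\}\cup\{\text{neighbours of }\zeta\}$ for a ``leader'' $\zeta$ such that every connected component of the Dynkin diagram with $\zeta$ removed, rooted at its node adjacent to $\zeta$, carries a $|1|$-grading. First I would run through the nodes $\zeta$ of each exceptional diagram and, using the standard classification of $|1|$-graded simple Lie algebras ($A_m$ at any node, $B_m$ and $C_m$ at the specified long end, $D_m$ at the vector node or either spinor node, $E_6$ at $\alpha_1$ or $\alpha_6$, $E_7$ at $\alpha_7$, and no $|1|$-grading for $E_8,F_4,G_2$), decide which $\zeta$ are admissible. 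I expect the outcome to be that every node of the diagram is an admissible leader for $E_6$, $E_7$, $E_8$ --- the component containing the rooting node $\alpha$ is always a chain $A_m$, or a $D_m$ rooted at a spinor node, or the $E_6$/$E_7$ subdiagram rooted at its unique $|1|$-gradable node --- while for $F_4$ the only failure is $\zeta=\alpha_4$ (the short end), leaving $F_4\{\alpha_1,\alpha_2\}$, $F_4\{\alpha_1,\alpha_2,\alpha_3\}$, $F_4\{\alpha_2,\alpha_3,\alpha_4\}$. The subtle point is that the node at which a component is rooted is the one adjacent to $\zeta$; so if that component is a $D_m$ it is rooted at a spinor node unless $\zeta$ is adjacent to the trivalent node, in which case deleting $\zeta$ turns the component into a chain --- the ``central node of $D_m$'' obstruction therefore never materialises.

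For $G_2$ the only Monge gradings are $G_2\{\alpha_1\}$ and $G_2\{\alpha_1,\alpha_2\}$ by Theorem~\StRef{Rank2Monge}, and both are non-rigid: with $\sigma=s_1\circ s_2$ one has $\Delta_\sigma=\{\alpha_1,\,3\alpha_1+\alpha_2\}\subset\Delta^+_\Sigma$ in either case, hence $\sigma\in W^2_\Sigma$, and substituting $\theta=3\alpha_1+2\alpha_2$ and $\langle\theta,\alpha_2\rangle=1$ into \EqRef{YamEq} gives $w_\Sigma(\omega_\sigma)=4>0$.

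It then remains to prove that all the $F_4,E_6,E_7,E_8$ Monge gradings found above are rigid, i.e. $w_\Sigma(\omega_\sigma)\le 0$ for every $\sigma\in W^2_\Sigma$; I would use Lemma~\StRef{Yam} in two stages. For a crude uniform bound, observe that for each of these algebras the only nonzero value $\langle\theta,\alpha_i\rangle$ equals $1$, and that for $\sigma_{ij}\in W^2_\Sigma$ one has $\alpha_i\in\Sigma$, $i\neq j$, and $\htSigma(s_i(\alpha_j))=\htSigma(\alpha_j)-\langle\alpha_j,\alpha_i\rangle\htSigma(\alpha_i)\le 1+m$, where $m$ is the largest off-diagonal Cartan entry in absolute value ($m=1$ for $E_6,E_7,E_8$ and $m=2$ for $F_4$); then \EqRef{YamEq} gives $w_\Sigma(\omega_\sigma)\le 2m+4-\htSigma(\theta)$. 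Using the highest-root data listed before the statement, this already yields rigidity whenever $\htSigma(\theta)\ge 6$, or $\htSigma(\theta)\ge 8$ when $\lieg=F_4$, which covers every Monge grading except the seven with small $\htSigma(\theta)$, namely $F_4\{\alpha_1,\alpha_2\}$, $E_6\{\alpha_1,\alpha_3\}$, $E_6\{\alpha_5,\alpha_6\}$, $E_6\{\alpha_2,\alpha_4\}$, $E_7\{\alpha_1,\alpha_3\}$, $E_7\{\alpha_6,\alpha_7\}$, $E_8\{\alpha_7,\alpha_8\}$. For each of these I would list $W^2_\Sigma$ explicitly --- equivalently, the pairs $(i,j)$ with $\alpha_i\in\Sigma$ and $\htSigma(s_i(\alpha_j))\ge 1$, read off the Hasse diagrams --- and tabulate $w_\Sigma(\omega_\sigma)$ from \EqRef{YamEq}, exactly as in the tables of the previous propositions. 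I expect every such weight to be $\le 0$, with maximum $0$ (for instance $\sigma_{21}$ in $F_4\{\alpha_1,\alpha_2\}$ and $\sigma_{13},\sigma_{31}$ in $E_6\{\alpha_1,\alpha_3\}$); moreover the diagram symmetry $\alpha_1\leftrightarrow\alpha_6$, $\alpha_3\leftrightarrow\alpha_5$ of $E_6$ identifies two of the seven computations, so only six are genuinely new.

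The hard part will be the bookkeeping in the enumeration step: deciding admissibility of each leader $\zeta$ requires correctly reading off the isomorphism type of each component of the Dynkin diagram minus $\zeta$, together with the exact position within it of the node adjacent to $\zeta$ (a spinor node versus the vector or central node of a $D_m$, the correct end of a $B_m$ or $C_m$, the correct node of $E_6$ or $E_7$); a single misidentification would add or delete a Monge grading. Once the finite list is pinned down, the cohomology part is routine --- a substitution into \EqRef{YamEq} --- with only the determination of $W^2_\Sigma$ in the seven small cases needing a little care with the Hasse diagrams.
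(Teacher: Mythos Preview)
Your proposal is correct and follows essentially the same strategy as the paper: bound $w_\Sigma(\omega_\sigma)$ via Lemma~\StRef{Yam}, then check the surviving cases directly. The paper's argument is slightly sharper in one place: since for each of $F_4,E_6,E_7,E_8$ there is a \emph{unique} simple root with $\langle\theta,\alpha_k\rangle\neq 0$, at most one of $\langle\theta,\alpha_i\rangle,\langle\theta,\alpha_j\rangle$ can equal $1$, which improves your bound by $1$ and cuts the residual hand-check from seven cases to three ($F_4\{\alpha_1,\alpha_2\}$, $E_6\{\alpha_5,\alpha_6\}$, $E_7\{\alpha_6,\alpha_7\}$, up to the $E_6$ diagram symmetry). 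Your explicit verification of non-rigidity for the two $G_2$ gradings and your upfront enumeration of Monge leaders are more thorough than the paper's proof, which treats only $F_4$ and the $E$ series here and leaves $G_2$ to context.
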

\begin{proof}  Consider first the case of $F_4$. If $\text{card}\Sigma \geq 3$, then $\htSigma(\theta) = 9$ and  with 
$\langle \theta, \alpha_i \rangle \leq 1$, $\langle \theta, \alpha_j \rangle \leq 1$, and $\htSigma(s_i(\alpha_j))  \leq 3$,
the inequality \EqRef{YamEq} cannot hold. For parabolic geometries of Monge type, $\Sigma$ must contain the long root
and this  leaves just $F_4\{\alpha_1, \alpha_2\}$ as the only possibility. But it is easy to check that this is rigid.

For $E_6$, $E_7$ and $E_8$ we have 
$\langle \theta, \alpha_i \rangle \leq 1$, $\langle \theta, \alpha_j \rangle \leq 1$,  $\htSigma(s_i(\alpha_j))  \leq 2$ and 
$\langle \theta, \alpha_i \rangle \neq \langle \theta, \alpha_j \rangle$ so that  the left-hand side of \EqRef{YamEq} does not
exceed 4. If  $\text{card}\Sigma  \geq 3$, then by the connectivity of $\Sigma$ we have $\htSigma(\theta) \geq 6, 6$ and $9$ for  $E_6$, $E_7$ and $E_8$ respectively and so 
only those geometries with  $\text{card}\Sigma  = 2$  remain as possibilities. For
$\text{card}\Sigma  = 2$ the size of $\htSigma(\theta)$ is still $\geq 5$ except for the 2 cases (apart from the symmetry of the $E_6$ Dynkin diagram) listed below, all of which are rigid by direct
calculation.

\bigskip
\begin{tabular}{|l|l|l|} 
\hline
Monge Systems & $W^2_\Sigma $    & Weights of $\sigma_{ij}$
\\
\hline
$F_4\{\, \alpha_1, \,  \alpha_2\, \}$  & $[\sigma_{12 },  \sigma_{21}, \sigma_{23}, \sigma_{24} ]$  & $[ -1, 0, -3]$
\\
\hline
$E_6\{\, \alpha_5, \,  \alpha_6\, \}$  & $[\sigma_{54 }, \sigma_{56}, \sigma_{65} ] $ &  $[-1,0,0]  $ 
\\
\hline
$E_7\{\, \alpha_6, \,  \alpha_7\, \}$  & $[\sigma_{65 }, \sigma_{67}, \sigma_{76} ] $ &  $ [-1, 0, 0] $ 
\\
\hline
\end{tabular}

\medskip
\end{proof}

We conclude this section with the description of $H^2(\lieg_-, \lieg)_p$ with positive homogeneity weights as $\lieg_0^{ss}$-representations. This  gives a characterization of the curvature
	for the normal Cartan connection which will play an important role in our subsequent study of the Cartan equivalence problem
	for non-rigid parabolic geometries  of Monge type. With this application in mind and  in view of \EqRef{IntroSym}, we will only discuss the non-rigid parabolic geometries of Monge type in Theorem B with $H^1(\lieg_-, \lieg)_p=0$ for all $p\geq 0$. Therefore we will not discuss the cases {\bf Ib}, {\bf IIIb}, and {\bf IIIa} with $\ell = 2$ in the following. 

By Kostant's theorem, the irreducible components of $H^2(\lieg_-, \lieg)_p$ are in one-to-one correspondence with $W^2_\Sigma$. The corresponding lowest weight vector is given by \EqRef{Harmonic}. We make the standard transformation from the lowest weight to the highest weight by the longest Weyl reflection.

Furthermore, if in \EqRef{Harmonic} the $e_{-\sigma(\theta)} \in \liep$, then the corresponding cohomology class is called {\deffont curvature} and otherwise it is called {\deffont torsion}. As mentioned in the introduction, all our second cohomology classes with positive homogeneities are \emph{torsion} classes, except the case for the Hilbert-Cartan equation {\bf Va}. We indicate this in our table by listing the homogeneity weight of $-\sigma(\theta)$, and it is strictly negative in all cases except one. 

In the following table, the $\omega$ are the fundamental weights of $\lieg_0^{ss}$, and the 
$V$ are the standard representations of $\lieg_0^{ss}$ corresponding to its first fundamental weight $\omega_1$. The subscript tf stands for trace free, and $\underline\otimes$ means the Cartan component of the tensor product. 

\bigskip

\hbox{
\kern - 30pt
\begin{tabular}{|l|l|l|l|l|l|l|l|}
\hline
 &Non-Rigid &  $\lieg_0^{ss}$   & $W_\Sigma^2$ & Hom.  & wts of & Highest & Rep.
\\
& Par. Monge 		&    & &   wts & $-\sigma(\theta)$ & weights    & spaces
\\
\hline
{$\bf  Ia$} & $A_3\{\alpha_1, \alpha_2, \alpha_3\}$ & $0$  & $ \sigma_{12}$, $\sigma_{32}$ & 1   & $-2$ & 0 & $\mathbb R$ 
\\
\cline{4-6}
 &  &   & $ \sigma_{13}$& 1 & $-1$ & &  
 \\
\cline{4-6}
 &  &   & $ \sigma_{21}$, $\sigma_{23}$& 2  & $-1$ & &  
\\
\cline{2-8}
 & $A_\ell\{\alpha_1, \alpha_2, \alpha_3\}$,  & $A_{\ell -3}$  & $\sigma_{12}$    & 1& $-2$ & $ \omega_1$& $V$
\\ 
\cline{4-6}
&  $\ell \geq 4$  && $\sigma_{21}$  & 2 & $-1$ &$ $&
\\
\hline
{$\bf IIa$}& $C_3\{\alpha_2, \alpha_3\}$ & $ A_1$  & $ \sigma_{21}$ & 1 &$-1$ &  0 &  $\mathbb R$ 
\\
\cline{4-8}
&&& $\sigma_{23}$  &1  & $-3$ & $5\omega_1$ & $S^5(V)$
\\
\cline{2-8}
& 
\parbox[t][33pt][b]{10pt}{\begin{align*} &C_\ell\{\alpha_{\ell-1}, \alpha_\ell\}\\ &\ell \geq 4
 \end{align*} }
 & $ A_{\ell -2}$  & $ \sigma_{\ell-1, \ell}$ & 1 & $-3$ & $3\omega_1 +  2\omega_{\ell -2}$ 
& $S^3(V) \underline{\bigotimes} S^2(V^*)$
\\[-3\jot]
\hline
{$\bf IIb$} & $C_3\{\alpha_1, \alpha_2, \alpha_3\}$ & $ 0$  & $ \sigma_{21}$ & 2 & $-1$ &  0  & $\mathbb R$ 
\\
\hline
{$\bf IIIa$} & $B_3\{\alpha_1, \alpha_2\}$ & $ A_1$  & $ \sigma_{12}$ & 2 & $-1$ & $4\omega_1$ & $S^4(V)$
\\
\cline{4-8}
&&& $\sigma_{21}$  &1 & $-2$ &  $6\omega_1$ & $S^6(V)$
\\
\cline{2-8}
& $B_\ell\{\alpha_1, \alpha_2\}$ & $ B_{\ell -2}$  & $ \sigma_{12}$ & 2 & $-1$ &  $2\omega_1$ & $S^2(V)_{\text{tf}}$
\\
\cline{4-8}
& $\ell \geq 4$ && $\sigma_{21}$  &1 & $-2$ &  $3\omega_1$ &   $S^3(V)_{\text{tf}}$
\\
\hline
$\bf IIIc$ & $B_3\{\alpha_2, \alpha_3\}$ & $ A_1$  & $ \sigma_{32}$ & 3 & $-1$ &  $2\omega_1$ 
& $S^2(V)$
\\
\hline
{$\bf IIId$} & $B_3\{\alpha_1, \alpha_2, \alpha_3\}$ & $ 0$  & $ \sigma_{32}$ & 2 & $-2$ & 0 & ${\mathbb R}$
\\
\hline
 {$\bf IVa$}& $D_4\{\alpha_1, \alpha_2\}$ & $ A_1 \oplus A_1$  & $ \sigma_{12}$ & 2 & $-1$ & $ [2\omega_1, 2\omega_1]$ 
&  $S^2(V_1) \otimes S^2(V_2) $
\\
\cline{4-8}
&&& $\sigma_{21}$  &1 &$-2$ & $ [3\omega_1, 3\omega_1]$  &  $S^3(V_1) \otimes S^3(V_2) $
\\
\cline{2-8}
& $D_\ell\{\alpha_1, \alpha_2\}$ & $ D_{\ell -2}$  & $ \sigma_{12}$ & 2 &$-1$ & $2\omega_1$ & $S^2(V)_{\text{tf}}$
\\
\cline{4-8}
&$\ell \geq 5$&& $\sigma_{21}$  & 1 &$-2$ &  $3\omega_1$  & $S^3(V)_{\text{tf}}$
\\
\hline
 {$\bf V{a}$} & $G_2\{\alpha_1\}$ & $A_1$ & $\sigma_{12}$  & 4 &{\bf 0} & $4\omega_1$ & $S^4(V)$
\\
\hline
{$\bf V{b}$} & $G_2\{\alpha_1, \alpha_2\}$ & 0 & $\sigma_{12}$  & 4 & $-1$ & 0 & ${\mathbb R}$
\\
\hline
\end{tabular}
}

\section{Standard Differential Systems for the Non-rigid  Parabolic Geometries of Monge type}
	In this section we use the standard matrix representations of the classical simple Lie algebras to explicitly 
	calculate the structure equations for each negatively  graded component  $\lieg_{-}$ of the non-rigid parabolic geometries 
	of Monge type enumerated in Theorem B.
	We give the structure equations in terms of the  dual 1-forms. 
	In each case these structure equations are easily integrated to give the
	Maurer-Cartan forms on the nilpotent Lie group $N$ for the Lie algebra $\lieg_{-}$ and the  associated 
	standard differential system is found.

\medskip 
\noindent
{\bf Ia. $\boldsymbol{A_\ell\{\, \alpha_1,\,  \alpha_2,\,   \alpha_3\, \},\ \ell \geq 3.$ }}
	We use the standard matrix representation for the Lie algebra $A_\ell=\lies\liel(\ell+1)$. 
	Then the Cartan subalgebra is defined by the  trace-free diagonal matrices $H_i = E_{i,i} - E_{i+1, i+1}$, $1 \leq i \leq \ell$.
	Let $L_i$ be the linear function on the Cartan subalgebra taking  the value of the $i$th entry.
	The simple roots are  $\alpha_i=L_i-L_{i+1}$ for $1\leq i\leq \ell$  and the positive roots are 
	$\alpha_i +\dots +\alpha_j$ for $1 \leq i \leq j \leq \ell$. Thus the positive roots of height 1 with respect 
	to  $\Sigma = \{\, \alpha_1,\,  \alpha_2,\,   \alpha_3\, \}$ are  $\alpha_1$, $\alpha_2$  and $\alpha_3 + \cdots +\alpha_i$ for 
	$ 3\leq i \leq \ell$.
	The  leader is  $X = e_{-\alpha_2} = E_{3,2}$ and the remaining root vectors of height $-1$, which define a basis for the abelian 
	subalgebra  $\liey$ 
	are $P_0 =  e_{-\alpha_1} = E_{2,1}$ and  $P_i =  e_{-\alpha_3 - \cdots- \alpha_{i+2}}  = E_{i+3, 3}$ for $ 1 \leq i \leq \ell- 2$. 
	This somewhat obscure labeling of the basis vectors will be justified momentarily.
	It is easy to verify that the given matrices are indeed the required root vectors with 
	respect to the above choice of  Cartan subalgebra.
	These vectors define the weight $-1$ component $ \lieg_{-1}$ of the grading for $\lies\liel(\ell+1)$ defined by $\Sigma$. 
	Since $[\lieg_{-1}, \lieg_{-i}] = \lieg_{-i -1}$, we calculate the remaining vectors in $\lieg_{-}$ to be
\begin{gather*}
	[ \,P_0,\, X\,] = Y_0 = -E_{3,1}, \quad   [\,P_i,\,  X\,] = Y_i =  E_{i+3,2}, 
\\
	[\, P_0, \, Y_i\,] = [ \,P_i, \, Y_0\,]  = Z_i = -E_{i+3, 1} .
\end{gather*}
The grading  of $\lieg_{-}$ and full structure equations are therefore
\newcolumntype{C}{>$ c < $} 
\begin{gather*}
\begin{aligned}
\lieg_{-1} &= \langle\,  P_0, \,  P_1,\, P_2, \,\ldots \, ,  P_{\ell-2},\, X\, \rangle, \\
\lieg_{-2} &= \langle\,  Y_0, \,  Y_1, \ldots, Y_{\ell-2}\,\rangle,\\
\lieg_{-3} &= \langle\,  Z_1, \ldots,  Z_{\ell -2} \,\rangle,
\end{aligned}
\quad\text{and} 
\quad
\begin{tabular}{C|CCCCCC}
	& P_0 & P_i &  X  & Y_0& Y_i & Z_i \\
\hline 
P_0 &0&  0  &   Y_0  & 0  & Z_i &  0 \\
P_i  &  & 0& Y_i & Z_i&0 &0 \\
X  & &  & 0 & 0  & 0&  0\\
Y_0 &&&& 0 & 0& 0 \\
Y_i  &&&&& 0 & 0\\
Z_i &&&&&& 0\\
\end{tabular} .
\end{gather*}

	In terms of the dual basis  $\{\, \theta_{p}^0,\, \theta_{p}^i,\,  \theta_x,\, \theta_{y}^0,\,  \theta_{y}^i,\, \theta_{z}^i\, \}$ 
	for the Lie algebra these structure equations  are
\begin{gather*}
d\theta_{p}^0 = 0, \quad  d\theta_p^i = 0 \quad  d\theta_{x} = 0, \\
d\theta_y^0 = \theta^{x} \wedge \theta_{p}^0, \quad  d\theta_y^i = \theta_{x} \wedge \theta_{p}^i, \quad  
d\theta_z^i  = \theta_y^i  \wedge \theta_{p}^0 +  \theta_y^0\wedge \theta_{p}^i.
\end{gather*}
	These structure equations are easily  integrated to give the following Maurer-Cartan forms on the nilpotent Lie group
\begin{gather*}
\theta_{p}^0  = dp^0, \quad  \theta_{p}^i = d p^i,  \quad\theta_x = dx,  \quad
\theta_y^0 = dy^0  - p^0 dx, \quad	 \theta_y^i = dy^i  - p^i dx,  \\
\quad  
\theta_{z}^i  = dz^i - p^0 dy^i  -  p^i dy^0  +   p^0 p^i   dx.
\end{gather*}
	The standard Pfaffian system defined by the parabolic geometry  $A_\ell\{\,  \alpha_1, \, \alpha_{2} ,\,  \alpha_3\,\}$ is  therefore 
\begin{align*}
	I_{A_\ell\{1,2,3\}} 
& 	=   \text{span\,} \{\,\theta_y^0, \,  \theta_y^i, \, \theta_{z}^i \, \}   
\\
&	= \text{span\,} \{\, dy^0  - p^0 dx, \, dy^i  - p^i dx,\,  dz^i   -   p^0 p^i   dx\,  \} .
\end{align*}
	This is the canonical Pfaffian  system for the Monge equations \EqRef{StandardA}.  
	By  Tanaka's theorem we are guaranteed that the symmetry algebra of the system is $\lies\liel(\ell + 1)$.
\par
\medskip 
\noindent
{\bf IIa. $\boldsymbol{C_\ell \{ \alpha_{\ell -1},\, \alpha_\ell \}, \  \ell \geq 3.}$ }
	The split real form for $C_\ell$  which we shall  use is  
	$\lies\liep(\ell, {\mathbb R}) = \{\, X \in \lieg\liel(2\ell, {\mathbb R}) \, |\,  X^tJ + J X = 0\, \}$, where 
\begin{equation*}
	J = \bmatrix 0 & K_\ell \\ - K_\ell & 0 \endbmatrix  \quad  \text{and}  \quad K_{\ell} =\begin{bmatrix} & & 1\\ & \iddots & \\1 & & \end{bmatrix}.  
\end{equation*}
	Each $X \in \lies\liep(\ell, {\mathbb R})$ may be written as $X = \displaystyle \bmatrix A & B \\ C &- A'\endbmatrix$ where 
	$A$,  $B$, $C$ are $\ell \times \ell$ matrices, $A' = KA^tK$   and $B =B'$ and $C= C'$. 
	The diagonal matrices $H_i =  E_{i,i} - E_{2\ell +1-i, 2\ell +1-i} $ define a Cartan subalgebra. The
	simple roots $\alpha_i = L_i - L_{i+1}$,  $ 1 \leq i \leq \ell-1$ and $\alpha_\ell = 2L_\ell$  
	and the positive roots  are
\begin{equation}
\begin{cases}
	&\alpha_i +\dots + \alpha_{j-1} \quad \text{ for }  1\leq i < j  \leq \ell , \quad\text{and}  \\ 
	&( \alpha_i +\dots +\alpha_{\ell-1}) + ( \alpha_j +\dots +\alpha_{\ell})\quad  \text{for } 1\leq i \leq j  \leq \ell .
\end{cases}
\end{equation}
(For the Lie algebras of type $B$, $C$ and $D$, we use the lists of positive roots from \cite{varadarajan:1984}.) 
	Therefore, for the choice of simple roots $\Sigma =  \{ \alpha_{\ell -1},\, \alpha_\ell \}$, the roots of height 1 are
	$\alpha_\ell$ and $\alpha_i + \dots +\alpha_{\ell- 1}$, for  $1\leq i \leq  {\ell -1} $. The root $-\alpha_\ell$ is our leader
	with root vector  $X =  E_{\ell +1, \ell}$.  A basis for the  abelian subalgebra $\liey$, corresponding to the remaining roots of height
	$-1$ is given by $P_i = E_{\ell, i} - E_{2\ell +1 - i , \ell+1}$.  One easily checks that these matrices belong to $\lies\liep(\ell, {\mathbb R})$
	and that they are indeed  root vectors for  the above choice of Cartan subalgebra. By direct calculation we then  find that 
\begin{gather*}
	[ \,P_i, \, X\,] = Y_i = -E_{\ell+1,i} - E_{2\ell +1 -i,\ell},\quad\text{and}   
\\
	[\, P_i,\,  Y_i\,] =  2Z_{ii} = 2E_{2\ell +1 -i,i}   \quad \text{and } [\, P_i,\,  Y_j\,] = Z_{ij}= E_{2\ell +1 -i,j} + E_{2\ell +1 -j,i} 
\end{gather*}
	Note that $Z_{ij} = Z_{ji}$. The grading and full structure equations for $\lieg_{-}$  are therefore
\begin{gather*}
\begin{aligned}
\lieg_{-1} &= \langle\,  P_1,\, P_2, \,\ldots \, ,  P_{\ell-1}, \, Z\, \rangle, \\
\lieg_{-2} &= \langle\, \,  Y_1, \ldots, Y_{\ell-1}\, \rangle,  \\
\lieg_{-3} &= \langle\,  Z_{11},\, Z_{12} \,\,\ldots\, , Z_{\ell\ell} \, \rangle,
\end{aligned}
\quad\text{and} 
\quad
\begin{tabular}{C|CCCCCC}
	& P _i &  X  & Y_i & Z_{ij} \\
\hline 
P_h&  0  & Y_h  & \epsilon Z_{hi}  & 0 \\
X  &     & 0 & 0&0  \\
Y_k  & &   & 0&  0\\
Z_{hk} && & & 0 
\end{tabular}
\end{gather*}
	where $\epsilon = 2$ if $i =j$ and $\epsilon = 1$ otherwise.	In terms of the dual basis  
	$\{\, \theta_{p}^i,\,  \theta_x,\,  \theta_{y}^i,\,  \theta_{z}^{ij}\}$ for $\lieg_{-}$ 
	these structure equations  are
\begin{gather*}
	d\theta^i_p = 0 \quad  d\theta_{x} = 0,\quad  d\theta_y^i = \theta_{x} \wedge \theta_{p}^i, \quad  
\\
	d\theta_z^{\,ij}  = \theta_y^i  \wedge \theta_p^j +  \theta_y^j \wedge \theta_p^i
\end{gather*}
	These structure equations are easily  integrated to give the following Maurer-Cartan forms
\begin{gather*}
	\theta_{p}^i = d p^ i,  \quad \theta_x = dx,  \quad 	 \theta_y^i = dy^i  - p^i dx,
\\
\theta^{ij}_z  = dz^{ij}- p^i dy^j  -  p^j dy^i   + p^ip^j\,dx.
\end{gather*}
	The standard Pfaffian system defined by the parabolic geometry  $C_\ell\{\, \alpha_{\ell-1},\,  \alpha_\ell \,\}$
	is  therefore
\begin{align*}
	I_{C_\ell\{\ell -1, \ell\}}
&	= \text{span\,} \{\,  \theta^i_y, \, \theta^{ij}_z  \,\}
  	=   \text{span} \{\, dy^i  - p^i dx,\,  dz^{ij}   -    p^ip^jdx\,  \} .
\end{align*}
	This is the canonical Pfaffian  system for the Monge equations \EqRef{StandardC}.  
	By  Tanaka's theorem we are guaranteed that the symmetry algebra of the system is $\lies\liep(\ell, {\mathbb R})$.
\par
\medskip 
\noindent
{\bf IIIa. $\boldsymbol{B_\ell \{ \alpha_1,\, \alpha_2\},\ \ell \geq 3.} $ } 
	The split real form for $B_\ell$ is  $\lies\lieo(\ell +1,  \ell)$  which we take to be 
	the Lie algebra of $n \times n$ matrices,  $n = 2\ell +1$, which are skew-symmetric with respect to the anti-diagonal matrix  
	$K_{n} = [k_{ij}]$.
	 The diagonal matrices $H_i = E_{i,i} - E_{n +1 -i, n +1  -i}$ define a Cartan subalgebra. The
	 simple roots are $\alpha_i = L_i - L_{i+1}$,  $ 1 \leq i \leq \ell-1$ and $\alpha_\ell = L_\ell$
	and the positive roots  are
\begin{equation*}
\begin{cases}
	&\alpha_i +\dots + \alpha_{j} \quad \text{ for }  1\leq i \leq j  \leq \ell , \quad\text{and}  \\ 
	&( \alpha_i +\dots +\alpha_{\ell}) + ( \alpha_j +\dots +\alpha_{\ell})\quad  \text{for } 1\leq i < j  \leq \ell.
\end{cases}
\end{equation*}
	Therefore, for the choice of simple roots $\Sigma =  \{ \alpha_{1},\, \alpha_2\}$, the roots of height $1$ are
\begin{equation*}
\begin{cases}
	&\alpha_1 \\
	& \alpha_2+ \dots +\alpha_{j} \quad \text{for } 2\leq j \leq {\ell},  \quad \text{and} \\
	& \alpha_2+\dots + \alpha_{i - 1} + 2\alpha_i +\dots + 2\alpha_{\ell} \quad \text{for } 3\leq i\leq \ell.  
\end{cases}
\end{equation*}

	The root $-\alpha_1$ is our leader
	with root vector  $X =  E_{2, 1} - E_{n, n -1}$.  
	A basis for the  abelian subalgebra $\liey$, corresponding to the remaining roots of height
	$-1$ is given by $P_i = E_{i+2, 2} - E_{n - 1, n - i -1}$ for $1 \leq i \leq n-4$.  
	One easily checks that these matrices belong to $\lies\lieo(\ell +1, \ell)$
	and that they are indeed  root vectors for the above choice of Cartan subalgebra.
	By direct calculation we find that
	for  $1 \leq i \leq n-4$ and $1 \leq j \leq n-4$
\begin{gather*}
	[ \,P_i,\, X\,] = Y_i = E_{i+2, 1} - E_{n, n- i-1},\quad\text{and}   
\\
[\,P_i,\,  Y_j\,] =  \kappa_{ij}Z,  \quad \text{where} \quad Z = E_{n, 2} -E_{n-1, 1}  \text{ and } [\kappa_{ij}] = K_{n-4}.
\end{gather*}
	The grading and full structure equations for $\lieg_{-}$ are therefore
\begin{gather*}
\begin{aligned}
\lieg_{-1} &= \langle\,  P_1,\, P_2, \,\ldots \, ,  P_{n-4}, \, X\, \rangle, \\
\lieg_{-2} &= \langle\, \,  Y_1, \ldots, Y_{n-4}\rangle, \\
\lieg_{-3} &= \langle \,Z \, \rangle,
\end{aligned}
\quad\text{and} 
\quad
\begin{tabular}{C|CCCCCC}
	& P_i &  X  & Y_i & Z \\
\hline 
P_h&  0  & Y_h  & \kappa_{hi}Z  & 0 \\
X   &     & 0 & 0&0  \\
Y_k  & &   & 0&  0\\
Z && & & 0 
\end{tabular} .
\end{gather*}
	
	In terms of the dual basis  $\{\, \theta_{p}^i,\,  \theta_x,\,  \theta_{y}^i,\,  \theta_{z}\,\}$ for  $\lieg_{-}$
	the structure equations  are
\begin{gather*}
	d\theta^i_p = 0, \quad  d\theta_{x} = 0,\quad  d\theta_y^i = \theta_{x} \wedge \theta_{p}^i, \quad  
	d\theta_z  =  \kappa_{ij}\theta_y^j  \wedge \theta_p^i.
\end{gather*}
	which are integrated to give the following  Maurer-Cartan forms
\begin{gather*}
	\theta_{p}^i = d p^i,  \quad \theta_x = dx,  \quad 	 \theta_y^i = dy^i  - p^i dx,
\\
\theta_z  = dz - \kappa_{ij}p^i dy^j   + \frac12 \kappa_{ij}{p^ip^j}\,dx.
\end{gather*}
	The standard Pfaffian system defined by the parabolic geometry $B_\ell\{\, \alpha_1, \, \alpha_2 \,\}$  is  therefore 
\begin{equation}
	I_{B_\ell\{1, 2\}} =   \text{span\,} \{\,  \theta^i_y, \, \theta_z  \,\} = \text{span\,} \{\, dy^i  - p^i dx,\,  dz   -    \frac12 \kappa_{ij}p^ip^jdx\,  \} .
\end{equation}
	This is the canonical Pfaffian  system for the Monge equations \EqRef{StandardB}.  
	By  Tanaka's theorem we are guaranteed that the symmetry algebra of the system is $\lies\lieo(\ell +1, \ell)$.
	
\par
\medskip
\noindent
{\bf IVa. $\boldsymbol{D_\ell\{ \alpha_1,\, \alpha_2\},\ \ell \geq 4.} $ } 
In this case $ n = 2\ell$  and the positive roots are 
\begin{equation}
\begin{cases}
	&\alpha_i +\dots + \alpha_{j-1} \quad \text{ for }  1\leq i < j  \leq \ell,  \quad\text{and}  \\ 
	&( \alpha_i +\dots +\alpha_{\ell-2}) + ( \alpha_j +\dots +\alpha_{\ell})\quad  \text{for } 1\leq i < j  \leq \ell 
\end{cases}
\end{equation}
	but otherwise the formulas from {\bf III}	 remain unchanged. 

\medskip

We now turn to the exceptional cases.

\medskip
\noindent
{\bf Ib. $\boldsymbol{A_\ell \{ \alpha_1,\, \alpha_2\, \}, \ \ell \geq 2 }.$}
	We  retain the notation used in {\bf Ia}. In the present case the leader is  $X = e_{-\alpha_1}  = E_{2, 1}$ and the 
	matrices  $P_i =  e_{-\alpha_2 - \cdots - \alpha_{i +2}} = E_{i +2, 2}$,  $1 \leq i  \leq \ell-1$  define a basis for $\liey$. The structure
	equations are  $[\, P_i,  X\, ] = Y_i = E_{i +2, 1}$  and the standard differential system is the contact system
\begin{equation}
	I_{A_\ell \{\alpha_a, \alpha_2\}}  = \{\, dy^1 - p^1 dx, dy^2 - p^2 dx,  \ldots, dy^{\ell -1} - p^{\ell -1} dx\, \}
\end{equation}
	on the jet space $J^1({\mathbb R}, \mathbb R^{\ell -1})$.

\medskip
\noindent
{\bf IIb. $\boldsymbol{C_3\{ \alpha_1,\, \alpha_2, \alpha_3\, \}}$.} 
	The roots of height 1 are  $\Sigma = \{ \alpha_1, \alpha_2, \alpha_3 \}$ .
	The root $-\alpha_2$ is our leader
	with root vector  $X =  E_{3, 2} - E_{5, 4}$.  
	A basis for the  abelian subalgebra $\liey$, corresponding to  the roots $-\alpha_1$ and $-\alpha_3$, is
	$P_1 = E_{2, 1} - E_{6, 5}$  and $P_2 = E_{4, 3}$ and we calculate 
\begin{alignat*}{3}	
	Y_1 &= [P_1, X] =  E_{6,4} -E_{3,1}, &\quad  Y_2  &= [P_2, X] = E_{5,3} + E_{4,2},
\\
        Z_1 &= [P_1, Y_2] = -E_{4,1} -E_{6,3}, &\quad Z_2 &=[X, Y_2] = -2E_{5,2}, 
\\
         Z_3 &=[X, Z_1]  =  E_{6,2} + E_{5,1}, &\quad   Z_4 & = [P_1, Z_3 ]  = -2E_{6,1}.
\end{alignat*}
The grading and full structure equations for $\lieg_{-}$ are therefore
\begin{gather*}
\begin{aligned}
\lieg_{-1} &= \langle\,  P_1,\, P_2, \, X\, \rangle, \\
\lieg_{-2} &= \langle\, \,  Y_1,\, Y_2\, \rangle, \\
\lieg_{-3} &= \langle \, Z_1, \, Z_2 \, \rangle, \\
\lieg_{-4} &= \langle \,Z_3 \, \rangle, \\
\lieg_{-5} &= \langle \,Z_4 \, \rangle \\
\end{aligned}
\quad\text{and} 
\quad
\begin{tabular}{C|CCCCCCCCCC}
	& P_1 &P_2 &  X  & Y_1 & Y_2 & Z_1 &Z_2 & Z_3 & Z_4 \\
\hline 
P_1 &  0 & 0  & Y_1  & 0      & Z_1    & 0     &2 Z_3 & Z_4 & 0  \\
P_2 &     & 0  & Y_2  & Z_1  & 0    & 0 &    0  & 0 & 0  \\
X     &     & &0 & 0& Z_2 &Z_3 &0&0 &0 \\
Y_1 &     & & &0 & Z_3&Z_4 &0  &0 &0 \\
Y_2 &     & & & & 0&0&0  &0 &0 \\
Z_1 &&&&&& 0 &0 &0&0 \\
Z_2 &&&&&&&0 &0 &0\\
Z_3 &&&&&&& &0 &0 \\
Z_4 &&&&&&&&& 0 \\
\end{tabular} .
\end{gather*}
	In terms of the dual basis  $\{\theta_{p}^1,\,  \theta_{p}^2, \, \theta_x,\,  \theta_{y}^1,\,\theta_{y}^2,\, 
	\theta_{z}^1\,\,   \theta_{z}^2,\,   \theta_{z}^3,\,  \theta_{z}^4\}$ for  $\lieg_{-}$ the structure equations  are
\begin{gather*}
	d\theta^1_p = 0, \quad   d\theta^2_p = 0, \quad d\theta_{x} = 0,\quad  d\theta_y^1 = \theta_{x} \wedge \theta_{p}^1, 
	\quad   d\theta_y^2 = \theta_{x} \wedge \theta_{p}^2,
\\
	d\theta^1_z  = \theta_y^1 \wedge \theta_p^2 + \theta_y^2  \wedge \theta_p^1, \quad d\theta^2_z = \theta_y^2  \wedge \theta_x, 
\\ 
	d\theta^3_z  = -\theta_y^1 \wedge \theta_y^2 + \theta^1_z\wedge \theta_x+ 2\theta^2_z\wedge \theta^1_p, \quad
	d\theta^4_z  = \theta_z^1 \wedge \theta_y^1 + \theta^3_z \wedge \theta_p^1. 
\end{gather*}
	which integrate to give
\begin{gather*}
	\theta^1_p = dp^1, \quad \theta^2_p= dp^2,\quad \theta_{x}  = dx, \quad  \theta_y^1 =dy^1 - p^1 dx, \quad \theta_y^2= dy^2 - p^2 dx, 
\\
	\theta^1_z  =  dz^1 - p^2dy^1 - p^1dy^2 + p^1p^2dx, \quad	 \theta^2_z  =  dz^2 -x dy^2,\quad 
\\
	\theta^3_z  = dz^3 -xdz^1 -2p^1dz^2 + (2xp^1 - y^1)dy^2 \quad 
\\
\theta^4_z  =   dz^4  + (xp^1 -y^1)dz^1 + (p^1)^2dz^2 - p^1dz^3  -p^1(xp^1 - y^1)dy^2. \quad	
\end{gather*}
	The standard  differential system  for $C_3\{\alpha_1, \alpha_2, \alpha_3\}$ is therefore
\begin{align*}
	I_{C_3\{1,2, 3\}} 
&	= \{\theta^1_p,\,  \theta^2_p, \, \theta^1_z,\,  \theta^2_z, \,  \theta^3_z, \, \theta^4_z \}
\\
&	= \{ dy^1 - p^1dx, dy^2 - p^2dx,\, dz^1 - p^1 p^2 dx,\, dz^2 - xp^2 dx, \, 
\\
&	\quad \quad dz^3 - (y^1p^2 + xp^1p^2)\, dx, \,dz^4 - y^1p^1p^2 dx \}	
\end{align*}
	is the canonical differential system for the first order Monge system \eqref{exceptionalC}.

\medskip
\noindent
{\bf IIIb. $\boldsymbol{ B_2 \{\, \alpha_2\, \}}$.}  The roots of height 1 are $\alpha_2$ and $\alpha_1 + \alpha_2$, and
the  standard differential system is just the canonical differential system 
\begin{equation}
		I_{ B_2 \{\, \alpha_2\, \}}  = \{dy  - p\,dx\}
\end{equation}	
\par
\medskip
\noindent
{\bf IIIc. $\boldsymbol{ B_3\{\, \alpha_2,\, \alpha_3\, \}}$.} 
	The roots of height 1 are  $\Sigma = \{\, \alpha_1 + \alpha_2,\,  \alpha_2,\, \alpha_3 \,\}$. 
	The root $-\alpha_3$ is our leader
	with root vector  $X =  E_{4, 3} - E_{5, 4}$.  
	A basis for the  abelian subalgebra $\liey$, corresponding to  the roots $-\alpha_2$ and $- \alpha_1 -\alpha_2$, is
	$Q_1 = E_{3, 2} - E_{6, 5}$  and $Q_2 = E_{3, 1} - E_{7, 5}$ and we calculate 
\begin{alignat*}{2}
	P_1 &= [Q_1, X] \ = E_{6,4} - E_{4, 2}, &\quad  P_2 &= [Q_2, X] =  E_{7,4} - E_{4,1} ,\\ 
	Y_1 &= [P_1, X]\, \ =  E_{6,3} - E_{5, 2}, & Y_2 &=[P_2, X]  = E_{7,3} - E_{5,1}, \\
	   Z &= [Q_1, Y_2]  =  E_{6, 1} - E_{7,2}.
\end{alignat*}
The grading and full structure equations for $\lieg_{-}$ are therefore
\begin{gather*}
\begin{aligned}
\lieg_{-1} &= \langle\,  Q_1,\, Q_2, \, X\, \rangle, \\
\lieg_{-2} &= \langle\,  P_1, \,P_2\, \rangle, \\
\lieg_{-3} &= \langle \,Y_1, \, Y_2 \, \rangle, \\
\lieg_{-4} &= \langle \,Z \, \rangle, \\
\end{aligned}
\quad\text{and} 
\quad
\begin{tabular}{C|CCCCCCCCC}
	& Q _1 &Q_2 &  X  & P_1 & P_2 & Y_1 &Y_2 & Z  \\
\hline 
Q_1 &  0 & 0  & P_1  & 0      & 0  & 0     & Z& 0   \\
Q_2 &     & 0  & P_2  & 0  & 0    & -Z&    0  & 0  \\
X     & &   0 & 0 &-Y_1 & -Y_2&0 &0 &0\\
P_1 &     & & &0 & -Z&0 &0&0  \\
P_2 &     & & & & 0&0 &0  &0 \\
Y_1 &&&&&& 0 &0 &0\\
Y_2 &&&&&&&0 &0 \\
Z &&&&&&&& 0  \\
\end{tabular} .
\end{gather*}
	In terms of the dual basis  
	$\{\,\theta_{q}^1,\,  \theta_{q}^2, \, \theta_x,\,  \theta_{p}^1,\,\theta_{p}^2,\, \theta_y^1\,\,   \theta_y^2,\,   \theta_z\}$ 
	for  $\lieg_{-}$  the structure equations are
\begin{gather*}
	d\theta^1_q = 0, \quad   d\theta^2_q = 0, \quad d\theta_{x} = 0,\quad  d\theta_p^1 = \theta_{x} \wedge \theta_{q}^1,
	\quad   d\theta_p^2 = \theta_{x} \wedge \theta_{q}^2,
\\
	d\theta^1_y  =  \theta_x \wedge  \theta_p^1 , \quad d\theta^2_y = \theta_x  \wedge  \theta_p^2  , \quad
	d\theta_z =   - \theta^1_y\wedge \theta^2_q  + \theta^2_y\wedge \theta^1_q + \theta^1_p \wedge \theta^2_p,
\end{gather*}
	and one finds that
\begin{gather*}
	\theta^1_q= dq^1, \quad \theta^2_q= dq^2\quad \theta_x = dx, \quad  \theta_p^1 =dp^1 - q^1 dx, \quad \theta_p^2= dy^2 - q^2 dx, 
\\
	\theta^1_y  =  dy^1 - p^1dx, \quad	 \theta^2_y  = dy^2 -p^2dx,  \quad 
\\
	\theta_z  = dz   - p^2dp^1 +q^2dy^1 - q^1 dy^2  +(p^2q^1 - p^1q^2)dx.
\end{gather*}
	The stantard Pfaffian differential system  for $B_2\{\alpha_2, \alpha_3\}$ is  therefore
\begin{align*}
	I_{B_2\{2, 3\} }
	& = \{\, \theta_y^1,\, \theta_y^2,\, \theta^1_p, \, \theta^2_p,\,  \theta^1_q, \, \theta^2_q, \theta_z\,\}
\\
	& =  \{\,dy^1 - p^1dx,\, dy^2 - p^2dx,\, dp^1 - q^1dx,\, dp^2 - q^2dx,\, dz - p^2q^1 dx\, \}
\end{align*}
	which coincides with the differential system for  the Monge equations \eqref{excpB1}.  We remark that 
	this Monge system  may also be encoded on an 7-dimensional manifold  by the Pfaffian system
	$\{\, \theta_y^1,\, \theta_y^2,\, \theta^1_p, \, \theta^2_p, \theta^2_q, \theta_z\,\}$ -- however,
	the symmetry algebra of this latter Pfaffian  system is only 16-dimensional.

\medskip
\noindent
{\bf IIId. $\boldsymbol{ B_3, \Sigma =  \{ \alpha_1,\, \alpha_2, \alpha_3\, \}}$. } 
	The roots of height 1 are  $\Sigma = \{ \alpha_1, \alpha_2, \alpha_3 \}$. 
	The root $-\alpha_2$ is our leader
	with root vector  $X =  E_{3, 2} - E_{6, 5}$,
	a basis for the  abelian subalgebra $\liey$, corresponding to  the roots $-\alpha_1$ and $-\alpha_3$,  is 
	$P_1 = E_{2, 1} - E_{7, 6}$  and $P_2 = E_{4, 3} - E_{5, 4}$ and we calculate 
\begin{alignat*}{3}
	Y_1 &= [P_1, X] = E_{7, 5} - E_{3, 1}, 	&\quad  Y_2 &=  [P_2, X] = E_{4,2} - E_{6,4}, 
\\ \ Z_1 &= [P_1, Y_2] = E_{7,4} - E_{4,1} ,  	&\quad  Z_2 &=  [P_2, Y_2]  = E_{6,3} - E_{5,2},
\\\   Z_3 &= [P_1, Z_2]  = E_{5,1} -E_{7,3},	&\quad  Z_4  &= [X, Z_3 ] \ = E_{7,2} -E_{6,1}.
\end{alignat*}
The grading and full structure equations for $\lieg_{-}$ are therefore
\begin{gather*}
\begin{aligned}
\lieg_{-1} &= \langle\,  P_1,\, P_2, \, X\, \rangle, \\
\lieg_{-2} &= \langle\, \,  Y_1, \,Y_2\, \rangle, \\
\lieg_{-3} &= \langle \,Z_1, \, Z_2 \, \rangle, \\
\lieg_{-4} &= \langle \,Z_3 \, \rangle, \\
\lieg_{-5} &= \langle \,Z_4\, \rangle \\
\end{aligned}
\quad\text{and} 
\quad
\begin{tabular}{C|CCCCCCCCCC}
	& P _1 &P_2 &  X  & Y_1 & Y_2 & Z_1 &Z_2 & Z_3 & Z_4 \\
\hline 
P_1 &  0 & 0  & Y_1  & 0      & Z_1    & 0     & Z_3 & 0 & 0  \\
P_2 &     & 0  & Y_2  & Z_1  & Z_2    & Z_3 &    0  & 0 & 0  \\
X     &     & &0 & 0&0 &0 &0&Z_4 &0 \\
Y_1 &     & & &0 & 0&0 &-Z_4 &0  &0 \\
Y_2 &     & & & & 0&-Z_4 &0  &0 &0 \\
Z_1 &&&&&& 0 &0 &0&0 \\
Z_2 &&&&&&&0 &0 &0\\
Z_3 &&&&&&& &0 &0 \\
Z_4 &&&&&&&&& 0 \\
\end{tabular} .
\end{gather*}
	In terms of the dual basis  
	$\{\, \theta_{p}^1,\,  \theta_{p}^2, \, \theta_x,\,  \theta_{y}^1,\,\theta_{y}^2,\, \theta_{z}^1,\,   \theta_{z}^2,\,   \theta_{z}^3,\,  \theta_{z}^4\}$  the structure equations for  $\lieg_{-}$ are
\begin{gather*}
	d\theta^1_p = 0, \quad   d\theta^2_p = 0, \quad d\theta_{x} = 0,\quad  d\theta_y^1 = \theta_{x} \wedge \theta_{p}^1, 
	\quad   d\theta_y^2 = \theta_{x} \wedge \theta_{p}^2,
\\
	d\theta^1_z  = \theta_y^2  \wedge \theta_p^1 + \theta_y^1 \wedge \theta_p^2,
	\quad d\theta^2_z = \theta_y^2  \wedge \theta_p^2, \quad
	d\theta^3_z =  \theta^1_z\wedge \theta^2_p + \theta^2_z\wedge \theta^1_p, 
\\
d\theta^4_z =   \theta^1_y  \wedge  \theta^2_z +    \theta^2_y  \wedge  \theta^1_z  + \theta^3_z \wedge \theta_x. 
\end{gather*}
	Integrating these equations, 
	 one finds that
\begin{gather*}
	\theta^1_p = dp^1, \quad \theta^2_p= dp^2\quad \theta_{x}  = dx, \quad  \theta_y^1 =dy^1 - p^1 dx, \quad \theta_y^2= dy^2 - p^2 dx, 
\\
	\theta^1_z  =  dz^1 - p^2dy^1 - p^1dy^2 + p^1p^2dx, \quad	 \theta^2_z  = dz^2 -p^2dy^2 +\frac12(p^2)^2dx, \quad 
\\
	\theta^3_z  = dz^3 + \frac12(p^2)^2 dy^1 + p^1p^2 dy^2 -p^2dz^1 - p^1dz^2 - \frac12 p^1(p^2)^2 dx, \quad 
\\
\theta^4_z  =  dz^4  +  y^2dz^1 + y^1dz^2 -  x\, dz^3. \quad	
\end{gather*}
	The standard Pfaffian differential system  for  the parabolic geometry $B_3\{\alpha_1, \alpha_2, \alpha_3\}$  is therefore
\begin{align*}
	I_{{B_3\{1,2,3\}} }
&	=  \{\theta_y^1,\, \theta_y^2,\, \theta^1_z, \, \theta^2_z,\, \theta^3_z,\, \theta^4_z\,\}
\\
&	=  \{dy^1 - p^1 dx,\, dy^2 - p^2 dx, dz ^1 - p^1p^2 dx,\,  dz^2 - \frac12(p^2)^2 dx,\,  dz^3 - \frac12 p^1(p^2)^2 dx,\, 
\\
&	\quad \quad   dz^4	- \frac12p^2(xp^1p^2 - y^1p^2 - 2y^2 p^1)\, dx \}
\end{align*}
	which is the canonical Pfaffian system for the first order Monge equations \eqref{excpB2}.
	Given the visual  asymmetry of these equations,
	it is a remarkable fact that the symmetry algebra  is isomorphic to $\lies\lieo(4,3)$.
\par
\medskip
\noindent
{\bf Va. $\boldsymbol{G_2\{ \alpha_1 \}$ }. }
	Let $\{H_1, H_2\}$  be a Cartan subalgebra for $\lieg_2$ and  let  $Y_1, Y_2, Y_3, Y_4, Y_5, Y_6$ be bases 
	for the root spaces  for  the negative roots $-\alpha_1$, $-\alpha_2$, $-\alpha_1 - \alpha_2$, $-2\alpha_1 - \alpha_2$, 
	$-3\alpha_1 - \alpha_2$, $-3\alpha_1 - 2 \alpha_2$.  In terms of a Chevalley basis (see \cite[p. 346]{fulton-harris:1991a}), 
	the structure equations  for $\lieg_2$ are,  in part,
\begin{gather*}
\begin{tabular}{C|CCCCCCCCC}
	& H _1 &H_2 &  Y_1 & Y_2 & Y_3 & Y_4 &Y_5 & Y_6 \\
\hline 
H_1 &  0 & 0  & -2Y_1  & 3Y_2  & Y_3    & -Y_4     &-3 Y_5 &0   \\
H_2 &     & 0  & Y_1  & -2Y_2  & -Y_3   & 0 &    Y_5  & -Y_6   \\
Y_1     &     &  & 0& -Y_3 & -2Y_4 & 3Y_5&0 &0 \\
Y_2 &     & & &0 & 0 & 0 & Y_6  &0 \\
Y_3 &     & & & &0& 3Y_6&0&0  \\
Y_4 &&&&&& 0 &0 &0\\
Y_5 &&&&&&&0 &0\\
Y_6 &&&&&&& &0 \\
\end{tabular} .
\end{gather*}
	For $\Sigma =  \{\,\alpha_1 \}$ the roots of  height 1 are $\alpha_1$ and $\alpha_1 + \alpha_2$ and  thus $\lieg_{-}$ is 
	spaned  by the vectors 
\begin{equation*}
	Q= Y_3, \ X = Y_1,  \ P =  [Q, X] = 2Y_4,\ Y = [P, X] =  -6Y_5, \ Z = [Q,P] = 6Y_6.
\end{equation*}
	The structure equations for the dual coframe  $\{\, \theta^q, \theta^x, \theta^p,\theta^y, \theta^z\}$ are
\begin{equation*}
	d \theta^q = 0, \quad d \theta^x = 0, \quad d\theta^p = \theta^x \wedge \theta^q, \quad d\theta^y = \theta^x \wedge \theta^p, \quad 
	d \theta^z = \theta^p \wedge \theta^q,
\end{equation*}
	which are easily integrated to give 
\begin{gather*}
	\theta^q = dq, \quad   \theta^x = dx, \quad  \theta^p =  dp - q\,dx, \quad \theta^y = dy - p\,dx,
\\
	\theta^z =  dz - q\,dp  + \frac12 q^2 \,dx.
\end{gather*}
	The standard differential system for $\lieg_2\{\alpha_1\}$ is therefore 
\begin{equation*}
	I_{G_2\{1\}} = \text{span} \,\{ \theta^y, \theta^p, \theta^z \}  = \text{span }\,\{ dy - p\,dx, \, dp - q\,dx, dz  - \frac12 q^2 \,dx \} ,
\end{equation*} 
	which is the canonical Pfaffian system for the Cartan-Hilbert equation \EqRef{StandardG}.
\par
\medskip
\noindent
{\bf Vb. $\boldsymbol{G_2\{ \alpha_1,\, \alpha_2\, \}$ }.  }
	In this case  the roots of  height 1 are  $ \{ \alpha_1,\, \alpha_2\, \}$  so that $\lieg_{-}$ is the sum of all the negative root spaces. We 
set
\begin{gather*}
	R = Y_2, \quad  X = Y_1,  \quad   Q =  [R, X] = Y_3,\quad P   =  [Q, X] =  2Y_4, \     
\\
	Y =  [P ,X] =  -6Y_5,  \quad           Z = [Y , R ] = 6Y_6.
\end{gather*}	
	The structure equations for the dual coframe  $\{\, \theta^r, \theta^x,  \theta^q, \theta^p,\theta^y, \theta^z\, \}$ are
\begin{gather*}
	d \theta^r = 0, \quad d \theta^x = 0, \quad d\theta^q = \theta^x \wedge \theta^r, \quad d\theta^p = \theta^x \wedge \theta^q,
\\ 
	d \theta^y = \theta^x \wedge \theta^p,    \quad d \theta^z = \theta^r \wedge \theta^y + \theta^p \wedge \theta^q
\end{gather*}
	which are easily integrated to give 
\begin{gather*}
	\theta^r = dr, \quad   \theta^x = dx, \quad  \theta^q =  dq - r\,dx, \quad \theta^p = dp - q\,dx,
\\
	\theta^y = dy - p\, dx, \quad \theta^z =  dz   + r\,dy   - q\,dp   + (\frac12 q^2 - pr) \,dx.
\end{gather*}
	The standard differential system for $\lieg_2\{\alpha_1\}$ is therefore 
\begin{equation*}
	I _{G_2\{1,2\}}= \text{span} \,\{ \theta^y, \theta^p, \theta^q, \theta^z \}  = \text{span }\,\{ dy - p\,dx, \, dp - q\,dx,  dq - r\, dx, dz  - \frac12 q^2 \,dx \} ,
\end{equation*} 
	which is the canonical Pfaffian system for the  prolongation of the  Pfaffian system for the  Cartan-Hilbert equation \EqRef{StandardG} given in the previous case.

\section{Infinitesimal Symmetries for the Standard Models}
	In this section we give explicit formulas  for the infinitesimal symmetries for the Monge equations
	in Theorem A. We find that these infinitesimal symmetries are all prolonged  point symmetries and that
	coefficients of the vector fields for any symmetry
	are all quadratic functions of the  variables $x$, $y^i$, $z^\alpha$. 

	The infinitesimal symmetries for any first order system of Monge equations 
\begin{equation*}
	\dot z ^\alpha = F^\alpha(x, y^i,\dot y^i, z^\alpha)
\end{equation*}
	is, by definition, the Lie algebra of  vector fields
\begin{equation}
	X = A \frac{\partial\hfill }{\partial x}  +  B^i \frac{\partial\hfill }{\partial y^i} + 
	C^\alpha \frac{\partial\hfill }{\partial z^\alpha} + D^i \frac{\partial\hfill }{\partial \dot y^i}, 
\EqTag{SymX}
\end{equation}	
	where the coefficients $A$, $B^i$, $C^\alpha$, $D^i$ are functions of the variables 
	$x$, $y^i$, $z^\alpha$, $\dot y^i$, which preserve the Pfaffian system
\begin{equation*}
	\mathcal I  = \text{span}\{\,\theta^i = dy^i - \dot y^i dx,  
	\theta^\alpha = dz^\alpha - F^\alpha \,dx\,\}. 
\end{equation*}
	From the equation $\CalL_X \theta^i \equiv 0 \mod \mathcal I$ one finds that the coefficients $A$ and 
	$B^i$ are independent of the variables $\dot y^i$  and that 
\begin{equation}
	D^i = D_x B^i - \dot y^i D_x A, \quad\text{where}\quad 
	D_x =   \frac{\partial\hfill }{\partial x} +  \dot y^i\frac{\partial\hfill }{\partial y^i}  +
	F^\alpha\, \frac{\partial\hfill }{\partial z^\alpha}. 
\EqTag{Di}
\end{equation}
	The equation $\CalL_X \theta^\alpha \equiv 0 \mod \mathcal I$ then implies {\bf[i]} that the coefficients 
	$C^\alpha$ are also independent of the variables $\dot y^i$ so that $X$ is a prolonged 
	point transformation,  and {\bf [ii]}
\begin{equation}
	D_x C^\alpha - X(F^\alpha) - F^\alpha D_x(A)  =0. 
\EqTag{SymEq}
\end{equation}
	
	To continue, we now take $F^\alpha  = F^\alpha_{ij} \dot y^i \dot y^j$,
	where the coefficients  $F^\alpha_{ij} = F^\alpha_{ji}$ are constant. 
	Then, by equation \EqRef{SymX}, we find that \EqRef{SymEq} becomes
\begin{equation*}
	D_x C^\alpha - 2F^\alpha_{ij} D_xB^i \dot y^j + F^\alpha D_x(A)  =0. 
\end{equation*}
	This equation is a  polynomial identity in the derivatives  $\dot y^j$ of order 4. From the 
	coefficients  of $\dot y^i\dot y^j\dot y^h\dot y^k$ and $1$ one finds that 
\begin{equation}
	\frac{\partial A }{\partial z^\alpha} = 0 \quad\text{and}\quad  \frac{\partial C^\alpha }{\partial x} = 0.
\EqTag{NoZNoX}
\end{equation}
	The coefficients of $\dot y^i$, $\dot y^i\dot y^j$ and $\dot y^i\dot y^j \dot y^k$ give, respectively,
\begin{subequations}
\EqTag{sym2}
\begin{align}
	2 F^\alpha_{\ell i} \frac{\partial B^\ell}{\partial x} &=  \frac{\partial C^\alpha }{\partial y^i},
\EqTag{sym2a}
\\[2\jot]
	F^\alpha_{\ell i} \frac{\partial B^\ell}{\partial y^j} 
	+ F^\alpha_{\ell j} \frac{\partial B^\ell}{\partial y^i}
	&= F^\alpha_{ij} \frac{\partial A }{\partial x} +  
	F^\beta _{ij} \frac{\partial C^\alpha }{\partial z^\beta}, \quad \text{and}
\EqTag{sym2b}
\\[2\jot]
	2F^\alpha_{\ell (i}F^\beta_{j k)}\frac{\partial B^\ell }{\partial z^\beta} &= 
	F^\alpha_{(ij}\frac{\partial A }{\partial y^{k)} }.
\EqTag{sym2c}
\end{align}
\end{subequations}
	These are the determining equations for the symmetries of the Monge equations 
	$\dot z^\alpha =  F^\alpha_{ij} \dot y^i \dot y^j$. 
	
		The integrability conditions for \EqRef{NoZNoX} and 
\EqRef{sym2} imply that all the coefficients $A, B^i$ and $C^\alpha$  are quadratic functions of the coordinates
	$\{x, y^i, z^\alpha\}$.
Thus the determining equations reduce to purely algebraic equations. It is now a straightforward, albeit a slightly tedious,
	matter to explicitly  construct a basis for all the solutions to the determining equations. The results of 
	these calculations are summarized in the following table of symmetries for the Monge equations of 
	type $A$, $BD$ and $C$.

\bigskip
\hbox{
\kern - 40pt
\begin{tabular}{|l|l|l|l|l|l|l|}
\hline
gr. &\rule[-5pt]{0pt}{19pt}$\dot z^i = \dot y^0 \dot y^i$  &   $\dot z = \frac12 \kappa_{ij}\dot y^i\dot y^j$   & $\dot z^{ij} = \dot y^i \dot y^j$	
\\
\hline
$-1$ 
&\rule[-5pt]{0pt}{19pt}$\partial_x$, $\partial_{y^0}$, $\partial_{y^i}$, $\partial_{z^i}$   
& $\partial_x$,  $\partial_{y^i}$, $\partial_{z}$
& $\partial_x$,  $\partial_{y^i}$, $\partial_{z^{ij}}$
\\
\hline
\rule[-7pt]{0pt}{21pt}
0 
& $x\,\partial_x +   \frac12 y^0\,\partial_{y^0} +\frac12 y^i\,\partial_{y^i}$
&  $x\,\partial_x + \frac12 y^i\,\partial_{y^i}$
& $x\,\partial_x  + \frac12 y^{j}\, \partial_{y^j}$
\\
\cline{2-4}
&\parbox[t][23.5pt][b]{60pt}{\begin{align*} &y^0\,\partial_x + z^i\,\partial_{y^i}\, ,
\\[-1\jot] & y^i\, \partial_x  +z^i\,\partial_{y^0} 
\\[-8.5\jot] \end{align*} }
& $y^i\, \partial_x + \frac12 z\kappa ^{ij}\ \partial_{y^j}$
& $y^i\, \partial_{x} + \frac12 z^{ij}\, \partial_{y^{j}}$
\\
\cline{2-4}
&\parbox[t][23.5pt][b]{60pt}{\begin{align*} &x\partial_{y^0} + y^i\partial_x\, , 
\\[-1\jot] & x\partial_{y^i} + y^0 \partial_{z^i} 
\\[-8.5\jot] \end{align*} }
& $x\, \kappa^{ij}\partial_y^{j} + y^i\, \partial_z$
&\parbox[t][23.5pt][b]{60pt}{\begin{align*} &x\, \partial_{y^{i}} + 2y^j\, \partial_{{ij}}
\\[-1\jot] &\partial_{ij} = \tfrac{1+\delta_{ij}}{2} \partial_{z^{ij}}
\\[-8.5\jot] \end{align*} }
\\
\cline{2-4}
&\parbox[t][23.5pt][b]{60pt}{\begin{align*} &y^0\partial_{y^0}  + z^i \partial_{z^i}\, ,
\\[-1\jot] & y^i\partial_{y^j} + z^i\partial_{z^j}
\\[-8.5\jot] \end{align*} }
&\parbox[t][23.5pt][b]{60pt}{\begin{align*} &y^i\, \partial_{y^i} + 2z\,\partial_z\, ,
\\[-1\jot] & \kappa^{ik}b_{ij}y^j\,\partial_{y^k}\ (b_{ij}\text{ skew})
\\[-8.5\jot] \end{align*} }
& $y^i\, \partial_{y^j} + 2 z^{ik}\, \partial_{{jk}} $
\\
\hline
1
&\parbox[t][23.5pt][b]{80pt}{\begin{align*} &x^2\partial_x +  xy^0\partial_{y^0} + xy^i\partial_{y^i}
\\[-1\jot] &  + y^0y^i\partial_{z^i}
\\[-8.5\jot] \end{align*} }
& \parbox[t][23.5pt][b]{80pt}{\begin{align*} & x^2 \partial_x  +   zy^i \partial_{y^i} +  K\partial_z
\\[-1\jot] & K = \kappa_{ij} y^i y^j
\\[-8.5\jot] \end{align*} }
&\parbox[t][23.5pt][b]{60pt}{\begin{align*} &x^2 \partial_x + xy^i\partial_{y^i}
\\[-1\jot] &  + y^iy^j \partial_{z^{ij}}
\\[-8.5\jot] \end{align*} }
\\
\cline{2-4}
&
\parbox[t][37pt][b]{80pt}{\begin{align*}& xy^0 \partial_x + (y^0)^2 \partial_{y^0} + y^0z^i \partial_{z^i}\, ,
\\[-1\jot] & xy^i\partial_x + xz^i \partial_{y^0}  + y^iy^j\partial_{y^j} 
\\[-1\jot] & + z^iy^j\partial_{z^j}
\\[-9.5\jot] \end{align*} }

&  \parbox[t][23pt][b]{80pt}{\begin{align*} &xy^i  \partial_x +  (y^i y^j  + (xz- K) \kappa^{ij})\partial_{y^j} 
\\[-1\jot] & + zy^i\partial_z
\\[-9.5\jot] \end{align*} }
& \parbox[t][23pt][b]{80pt}{\begin{align*} &xy^i  \partial_x +  y^jz^{ik}\partial_{{jk}}
\\[-1\jot] & + {\textstyle \frac12}(xz^{ij} + y^iy^j)\partial_{y^i}
\\[-9\jot] \end{align*} } 
\\
\cline{2-4}
&\parbox[t][23pt][b]{80pt}{\begin{align*} &y^0y^i \partial_x + y^0z^i\partial_{y^0} + y^iz^j\partial_{y^j} 
\\[-1\jot] &  + z^i z^k\partial_{z^k}
\\[-9.5\jot] \end{align*} }
&  $K \partial_x   + zy^i\partial_{y^i} + z^2 \partial_z$
&\parbox[t][23pt][b]{70pt}{\begin{align*}&y^i y^j \partial_x + z^{ih}z^{jk} \partial_{{hk}}
\\[-1\jot] &+ {\textstyle \frac12}(y^iz^{jk} + y^jz^{ik})\partial_{y^k} 
\\[-9\jot] \end{align*} }
\\
\hline
\end{tabular}
}

\bigskip

It is not difficult to see that {\it the above point symmetries for our Monge equations coincide with the infinitesimal generators for the $G$ action on $G/\tilde P$, where $\tilde P$ is the 
parabolic subgroup of $G$ defined by the $|1|$-gradings using the leader only.}

\begin{bibdiv}
\begin{biblist}
\bib{cap-slovak:2009a}{book}
{
author = {\u Cap, A.},
author = {Slov\'ak, J.},
title = {Parabolic Geometries I, Background and General Theory},
series = {Mathematical Surveys and Monographs},
volume = {154},
publisher = {Amer. Math. Soc.},
year = {2009}
}
\bib{cartan:1893a}{article}
{
author = {Cartan, E.},
title = {Sur la structure des groupes simples  finis et continus},
journal = {C. R. Acad. Sci. Paris},
year = {1893},
pages = {784 --786},
}
\bib{cartan:1893b}{article}
{
author = {Cartan, E.},
title = {\"Uber die einfachen Transformationgruppen},
journal = {Leipziger Berichte},
year = {1893},
pages = {395-420}
} 
\bib{cartan:1910a} {article}
{
author = {Cartan, E.}
title = {Les syst\`emes de Pfaff, \`a cinq variables ans les \'equations  aux d\'eri\'ees  partielles du second ordre},
journal = {Ann. Sci. \'Ecole Norm.  Sup.}
volume = {27},
year = {1910},
pages = {109-192}
}
\bib{cartan:1911a} {article}
{
author = {Cartan, E.}
title = {Sur les syst\'emes en involution d'\'equations aux d\'eriv\'ees partielles du second ordre  \'a une fonction inconnue  de trois variables ind\'ependantes},
journal = {Bull. de Soc. Math. France}
volume = {39},
year = {1910},
pages = {352-443}
}
\bib{cartan:1914a} {article}
{
author = {Cartan, E.},
title = {Sur l'\'euivalance absolute de certain syst\'emes  d'\'equations diff\'erentielles et sur certaines familles de courbes},
journal = {Bull, de Soc. Math. France},
volume = {42},
year = {1914},
pages = {12-48}
}
\bib{engel:1893a}{article}
{
author = {Engel, M. F.},
title = {Sur un groupe simple \'a quatorze param\'etres},
journal = {C. R. Acad. Sci. Paris},
year = {1893},
volume = {116},
pages = {786--788}
}

\bib{fulton-harris:1991a}{book}
{
author = {Fulton, W.},
author = {Harris, J.},
title = {Representation Theory: A First Course},
series = {Grad. Texts in Math.},
volume = {129},
year = {1991},
publisher = {Springer-Verlag},
address = {New York}
}

\bib{Humphreys}{book}{
   author={Humphreys, James E.},
   title={Introduction to Lie algebras and representation theory},
   note={Graduate Texts in Mathematics, Vol. 9},
   publisher={Springer-Verlag},
   place={New York},
   date={1972},
   pages={xii+169},
}

\bib{hawkins}{book}{
   author={Hawkins, Thomas},
   title={Emergence of the theory of Lie groups},
   series={Sources and Studies in the History of Mathematics and Physical
   Sciences},
   note={An essay in the history of mathematics 1869--1926},
   publisher={Springer-Verlag},
   place={New York},
   date={2000},
   pages={xiv+564},
}

\bib{helgason:1977a}{article}
{
author = {Helgason, S.},
title = {Invariant differential equations on homogeneous manifolds},
journal = {Bull. of the Amer. Math. Soc.}
volume = {85},
number = {5},
year = {1977},
pages = {751 -- 774}
}

\bib{hilbert:1912a}{article}
{
author = {Hibert, D.},
title = {Ueber den  Begriff der Klasse ven Differentialgleichungen},
journal = {Festschrift Heinrich Weber},
year = {1912}
}

\bib{kostant}{article}{
   author={Kostant, Bertram},
   title={Lie algebra cohomology and the generalized Borel-Weil theorem},
   journal={Ann. of Math. (2)},
   volume={74},
   date={1961},
   pages={329--387},
 }

\bib{Strazzullo}{article}{
	author={Strazzullo, F.}
	title={Symmetry analysis of general rank-3 Pfaffian systems in five variables}
	journal={Ph.D. Thesis, Utah State University}
	date={2009}
}

\bib{Spencer}{article}{
	author={Sitton, S.}
	title={Geometric analysis of systems of three partial differential equations in one unknown function of three variables}
	journal={Ph.D. Thesis, Utah State University}
	status={in preparation}
	date={2014}
}

\bib{varadarajan:1984}{book}
{
author = {Varadarajan, V. S.},
title = {Lie Groups, Lie Algebras, and Their Representations},
series = {Grad. Texts in Math.},
volume = {102},
year = {1984},
publisher = {Springer-Verlag},
address = {New York}
}

\bib{yamaguchi:1993a}{article}
{
author = {Yamaguchi, K.},
title = {Differential systems associated with simple graded Lie algebras},
journal ={Adv. Studies in Pure Math},
volume = {22},
year = {1993}
pages = {413--494}
}

\end{biblist}
\end{bibdiv}

\end{document}